\newtheorem{thm}{Theorem}
\newtheorem{prop}{Proposition}
\newtheorem{defi}{Definition}
\newtheorem{rem}{Remark}
\newtheorem{lem}{Lemma}
\DeclareMathOperator{\Sym}{Sym}
\begin{document}
\title{A Rigidity Result for a Reduced Model of a  Cubic-to-Orthorhombic Phase Transition in the Geometrically Linear Theory of Elasticity\footnote{This work was part of the diploma thesis of the author written under the supervision of Prof. Felix Otto. She is greatly indebted to him for introducing her to the topic, insightful discussions, comments and advice. Further, she would like to thank her PhD advisor Prof. Herbert Koch for his constant support and advice. She thanks the Deutsche Telekomstiftung and the Hausdorff Center for Mathematics for financial support and the Max-Planck-Institute for Mathematics in the Sciences in Leipzig for its kind hospitality. }}
\author{Angkana R\"uland\footnote{Mathematisches Institut, Universit\"at Bonn, Endenicher Allee 60, 53115 Bonn, Germany}}
\maketitle

\begin{abstract}
We study a simplified two-dimensional model for a cubic-to-orthorhombic phase transition occuring in certain shape-memory-alloys. 
In the low temperature regime the linear theory of elasticity predicts various possible patterns of martensite arrangements:
Apart from the well known laminates this phase transition 
displays additional structures involving four martensitic variants -- so called crossing twins. \\
Introducing a variational model including surface energy, we show that these structures are rigid under small energy perturbations. Combined with an upper bound construction this
gives the optimal scaling behavior of incompatible microstructures. These results are related to papers by Capella and Otto, \cite{CO08}, \cite{CO10}, as well as to a paper by Dolzmann and M\"uller, \cite{DM}.
\end{abstract}

\tableofcontents

\section{Introduction}
\subsection{The Stress-free Setting}
Working in the framework of linear elasticity, six stress-free strains characterize the body-centered to face-centered cubic-to-orthorhombic phase transition:
\begin{align*}
& e^{(1)} = \epsilon \begin{pmatrix}
                     \phantom{-}1 & \phantom{-}\delta & \phantom{-}0\\
		    \phantom{-}\delta & \phantom{-}1 &\phantom{-}0\\
		    \phantom{-}0 & \phantom{-}0 & -2 
                    \end{pmatrix}, \;
 e^{(2)} = \epsilon \begin{pmatrix}
                     \phantom{-}1 & -\delta & \phantom{-}0\\
		    -\delta & \phantom{-}1 &\phantom{-}0\\
		    \phantom{-}0 & \phantom{-}0 & -2 
                    \end{pmatrix}, \\
&e^{(3)} = \epsilon \begin{pmatrix}
                    \phantom{-}1 & \phantom{-}0 & \phantom{-}\delta\\
		    \phantom{-}0 & -2 & \phantom{-}0\\
		    \phantom{-}\delta & \phantom{-}0 & \phantom{-}1
                   \end{pmatrix}, \;
e^{(4)} = \epsilon \begin{pmatrix}
                    \phantom{-}1 & \phantom{-}0 & -\delta\\		
		    \phantom{-}0 & -2 & \phantom{-}0\\
		    -\delta & \phantom{-}0 & \phantom{-}1
                   \end{pmatrix}, \\
&e^{(5)} = \epsilon \begin{pmatrix}
                    -2 & \phantom{-}0 & \phantom{-}0\\
		    \phantom{-}0 & \phantom{-}1 & \phantom{-}\delta \\
		    \phantom{-}0 & \phantom{-}\delta & \phantom{-}1
                   \end{pmatrix}, \;
e^{(6)} = \epsilon \begin{pmatrix}
                    -2 & \phantom{-}0 & \phantom{-}0\\
		    \phantom{-}0 & \phantom{-}1 & -\delta \\
		    \phantom{-}0 & -\delta & \phantom{-}1
                   \end{pmatrix}.
\end{align*}
Here $\epsilon$ and $\delta$ are dimensionless parameters of typical magnitude $\sim 0.01$ and $\sim 0.25$, respectively.
Stress-free configurations of phases are therefore solutions to the 6-well problem:
\begin{align*}
 e(u) \in \{ e^{(1)}, ..., e^{(6)} \},
\end{align*}
where $e(u)= \frac{\nabla u + (\nabla u)^{t}}{2}$ is the strain tensor describing the relative changes of length and $u$ is a displacement field indicating how much a particle has
been moved under the deformation.\\

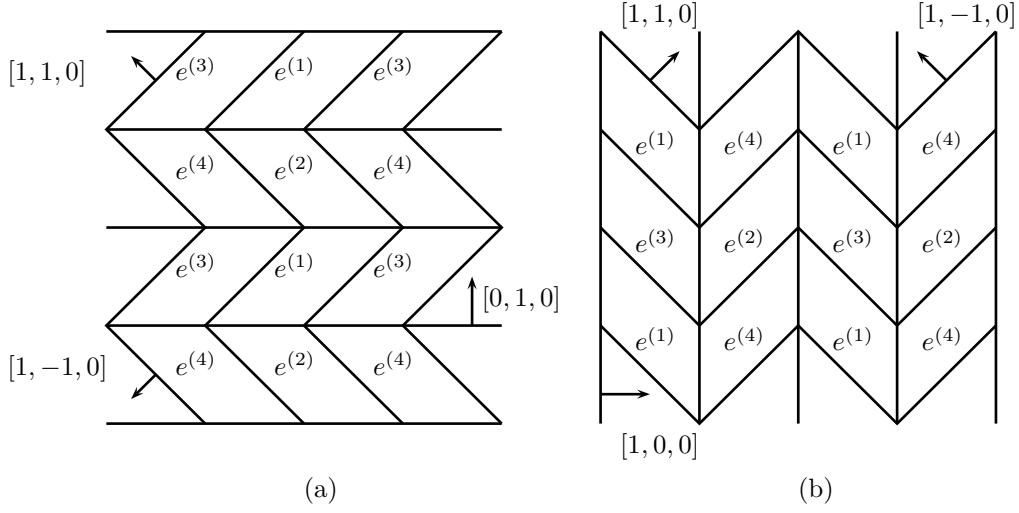
\begin{figure}[h]
\begin{pspicture}(-7.8,-1.25)(4,6)
\psset{unit=1.3cm}
\psset{linewidth=1pt}
\psline(-5,0)(-1,0)
\psline(-5,1)(-1,1)
\psline(-5,2)(-1,2)
\psline(-5,3)(-1,3)
\psline(-5,4)(-1,4)

\psline(-4,0)(-5,1)(-4,2)(-5,3)(-4,4)
\psline(-3,0)(-4,1)(-3,2)(-4,3)(-3,4)
\psline(-2,0)(-3,1)(-2,2)(-3,3)(-2,4)
\psline(-1,0)(-2,1)(-1,2)(-2,3)(-1,4)

\put(-4.3,0.5){$e^{(4)}$}
\put(-3.3,0.5){$e^{(2)}$}
\put(-2.3,0.5){$e^{(4)}$}

\put(-4.3,1.5){$e^{(3)}$}
\put(-3.3,1.5){$e^{(1)}$}
\put(-2.3,1.5){$e^{(3)}$}

\put(-4.3,2.5){$e^{(4)}$}
\put(-3.3,2.5){$e^{(2)}$}
\put(-2.3,2.5){$e^{(4)}$}

\put(-4.3,3.5){$e^{(3)}$}
\put(-3.3,3.5){$e^{(1)}$}
\put(-2.3,3.5){$e^{(3)}$}

\psline{->}(-1.3,1)(-1.3,1.5)
\put(-1.2,1.2){$[0,1,0]$}

\psline{->}(-4.5,0.5)(-4.75,0.25)
\put(-6,0.5){$[1,-1,0]$}

\psline{->}(-4.5,3.5)(-4.75,3.75)
\put(-6,3.5){$[1,1,0]$}

\put(-3,-0.75){(a)}

\psset{linewidth=1pt}
\psline(0,0)(0,4)
\psline(1,0)(1,4)
\psline(2,0)(2,4)
\psline(3,0)(3,4)
\psline(4,0)(4,4)
\psline(0,1)(1,0)(2,1)(3,0)(4,1)
\psline(0,2)(1,1)(2,2)(3,1)(4,2)
\psline(0,3)(1,2)(2,3)(3,2)(4,3)
\psline(0,4)(1,3)(2,4)(3,3)(4,4)
\psline{->}(0,0.3)(0.5,0.3)
\put(0.2,-0.3){$[1,0,0]$}
\psline{->}(0.5,3.5)(0.8,3.8)
\put(0.2,4.1){$[1,1,0]$}
\psline{->}(3.5,3.5)(3.2,3.8)
\put(3.2,4.1){$[1,-1,0]$}
\put(.35,2.75){$e^{(1)}$}
\put(.35,1.75){$e^{(3)}$}
\put(.35,.75){$e^{(1)}$}
\put(1.25,2.75){$e^{(4)}$}
\put(1.25,1.75){$e^{(2)}$}
\put(1.25,.75){$e^{(4)}$}
\put(2.35,2.75){$e^{(1)}$}
\put(2.35,1.75){$e^{(3)}$}
\put(2.35,.75){$e^{(1)}$}
\put(3.25,2.75){$e^{(4)}$}
\put(3.25,1.75){$e^{(2)}$}
\put(3.25,.75){$e^{(4)}$}
\put(2,-0.75){(b)}
\end{pspicture}
\caption{Possible crossing twin structures in the $y_{1},y_{2}$-plane: (a) corresponds to $e_{12}=e_{12}(y_{2})$, (b) corresponds to $e_{12}=e_{12}(y_{1})$.
All possible crossing twin configurations of the simplified setting display a characteristic structure: They consist of double-laminates which are made of an ``outer structure'' -- here given
by pairs of variants 1 and 3, as well as 2 and 4  -- and an ``inner structure'' determining the precise arrangement of the phases -- here these are given by the twinning modes of the pairs
1 and 3 and 2 and 4, respectively.
One notices that the relative volume fractions $\theta_{i}$ are determined by these structures: 
In (a) we have $\frac{\theta_{1}}{\theta_{1}+\theta_{3}}= \frac{\theta_{2}}{\theta_{2}+\theta_{4}}$, whereas
in (b) the situation is described by $\frac{\theta_{1}}{\theta_{1}+\theta_{3}}= \frac{\theta_{4}}{\theta_{2}+\theta_{4}}$.}
\label{fig:1}
\end{figure}

In contrast to the cubic-to-tetragonal phase transition for which Dolzmann and M\"uller, \cite{DM}, proved that (locally) only simple laminates occur, experiments suggest 
that in the cubic-to-orthorhombic phase transition crossing twin structures, i.e. structures involving zig-zag-bands of four martensitic phases (c.f. Figure \ref{fig:1}), have to be expected.
In order to capture these configurations we consider a simplified model: We assume that the strains are two-dimensional and only four variants of martensite, say variant one to four, are present. 
Carrying out calculations in the piecewise affine setting, we observe that there are exactly two twinning connections between any of the martensites and that there exist precisely six
martensitic crossing twin structures involving planar four-fold corners. These four-fold corners can be iterated to form the crossing twin structures.\\
In this setting it proves to be advantageous to carry out a change of coordinates and to renormalize the strains:
Setting $$C= \frac{1}{\sqrt{2}}
\begin{pmatrix}
0 & 1 & 1 \\
\sqrt{2} &  0 & 0 \\
0 & 1 & -1 
\end{pmatrix} \cdot \sqrt{6}\delta
\begin{pmatrix}
\frac{1}{\sqrt{3}} & 0 & 0\\
0& \frac{\sqrt{3}}{\sqrt{2}\delta} & 0\\
0 & 0 & \frac{1}{\sqrt{3}} \end{pmatrix}, $$ we are left with the following matrices
\begin{align*}
&\tilde{e}^{(1)} = \frac{\epsilon}{2d} 
\begin{pmatrix}
\phantom{-}d_{1} & \phantom{-}1 & \phantom{-}1\\
\phantom{-}1 & \phantom{-}d_{2} & \phantom{-}1 \\
\phantom{-}1 & \phantom{-}1& \phantom{-}d_{3} 
\end{pmatrix},\;\;
\tilde{e}^{(2)} = \frac{\epsilon}{2d}
\begin{pmatrix}
\phantom{-}d_{1} & -1 & \phantom{-}1\\
-1 & \phantom{-}d_{2} & -1 \\
\phantom{-}1 & -1& \phantom{-}d_{3} 
\end{pmatrix}, \\
&\tilde{e}^{(3)} = \frac{\epsilon}{2d}
\begin{pmatrix}
\phantom{-}d_{1} & \phantom{-}1 & -1\\
\phantom{-}1 & \phantom{-}d_{2} & -1 \\
-1 & -1& \phantom{-}d_{3} 
\end{pmatrix}, \;\;
\tilde{e}^{(4)} = \frac{\epsilon}{2d}
\begin{pmatrix}
\phantom{-}d_{1} & -1 & -1\\
-1 & \phantom{-}d_{2} & \phantom{-}1 \\
-1 & \phantom{-}1& \phantom{-}d_{3} 
\end{pmatrix},
\end{align*}
where $d^{-1}= 6\delta^{2},\; d_{1}= -\frac{1}{3} ,\; d_{2}= \frac{3}{2\delta^{2}} ,\; d_{3}= -\frac{1}{3} $. In the sequel we will suppress the tildes in the notation.
In these coordinates the first four strain tensors are highly symmetric, therefore if we are interested in two dimensional strains depending on only two coordinate directions, we can w.l.o.g. consider configurations depending on the $y_{1}, \; y_{2}$-coordinates only. For these it is possible to show that generic configurations are given by crossing twin phase distributions. The main result of the stress-free setting
is formulated in the following theorem (c.f. \cite{R}):
\medskip

\begin{thm}
\label{thm:1}
Let $U\subset \mathbb{R}^{3}$ be open, convex.
Assume $e\in\Sym(3,\mathbb{R}), \; e= e(y_{1}, y_{2})= \frac{\nabla u + (\nabla u)^{t}}{2}$, $u\in W^{1,\infty}(U ,\mathbb{R}^{3})$ such that
$e(u)\in \left\{e^{(1)}, \; e^{(2)}, \; e^{(3)}, \; e^{(4)} \right\}$ in $U$.  
\begin{enumerate}
\item  Then the following dichotomy holds: 
\begin{equation*}
e_{12} = e_{12}(y_{1}) \; \mbox{ or } \; e_{12} = e_{12}(y_{2}).
\end{equation*} 
\item In case $e_{12} = e_{12}(y_{1})$ there exists a function $g(t)$ such that:
\begin{equation*}
\label{eq:thm1.1}
(e_{13}\circ\Phi)(s,t) = e_{12}(s)g(t) \mbox{ and } (e_{23}\circ\Phi)(s,t) = g(t),
\end{equation*} 
where $\Phi(s,t) = (s, -E_{12}(s) + t) $ and $E_{12}^{\prime}(y_{1}) = e_{12}(y_{1}), \;
E_{12}(0)=0.$ 
Due to symmetry, $e= e(y_{1},y_{2})$ can also be replaced by $e(y_{1}, y_{3})$ and $e(y_{2},y_{3})$ respectively which yields analogous results. 
\end{enumerate}
\end{thm}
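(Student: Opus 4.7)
The plan is to reduce everything to two compatibility identities for the off-diagonal components of $e$, and then to exploit the fact that these components take only two values. Write $a=e_{12}$, $b=e_{13}$, $c=e_{23}$. Inspection of the four matrices $e^{(1)},\dots,e^{(4)}$ shows two structural facts: the diagonal entries $e_{11}, e_{22}, e_{33}$ are the \emph{same} constants for all four variants, and the off-diagonal signs satisfy $a\,b\,c\equiv +1$ pointwise (after suppressing the common prefactor $\epsilon/(2d)$, so that $a,b,c\in\{-1,+1\}$). Since $a^2=1$, this rewrites algebraically as $b=a\,c$. Next, the constancy of the diagonals and $u\in W^{1,\infty}$ force the decomposition $u_i(y)=e_{ii}y_i+g_i$, with $g_i$ depending only on the other two coordinates. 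Plugging this into $2e_{12}=\partial_1 u_2+\partial_2 u_1$ and using the $y_3$-independence of $a$ to match the $y_3$-dependences of $\partial_1 g_2$ and $\partial_2 g_1$ yields, distributionally on $U$,
\begin{equation*}
\partial_{1}\partial_{2}\, a \;=\; 0.
\end{equation*}
Repeating the argument with $u_3=e_{33}y_3+g_3(y_1,y_2)$ fed into $2e_{13}=\partial_1 u_3+\partial_3 u_1$ and $2e_{23}=\partial_2 u_3+\partial_3 u_2$ produces a constant $h_0\in\mathbb{R}$ for which
\begin{equation*}
\partial_1 c - \partial_2 b \;=\; h_0.
\end{equation*}

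For part 1, convexity of $U$ combined with $\partial_{12}a=0$ yields an additive decomposition $a(y_1,y_2)=f(y_1)+g(y_2)$. Suppose $f$ were essentially non-constant with two distinct essential values $v_1\neq v_2$. For a.e.\ $y_2$ both $v_1+g(y_2)$ and $v_2+g(y_2)$ must lie in $\{-1,+1\}$; but the two-element sets $\{-1-v_1,\,1-v_1\}$ and $\{-1-v_2,\,1-v_2\}$ intersect in at most one point, so $g$ would be essentially constant. Hence either $f$ or $g$ is constant, yielding the dichotomy $e_{12}=e_{12}(y_1)$ or $e_{12}=e_{12}(y_2)$.

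For part 2, assume $e_{12}=e_{12}(y_1)$. Combining $b=ac$ with $\partial_2 a=0$ gives $\partial_2 b=a\,\partial_2 c$, and the second compatibility identity becomes the transport equation $\partial_1 c - a(y_1)\,\partial_2 c = h_0$. The change of variables $\Phi(s,t)=(s,-E_{12}(s)+t)$ is precisely the characteristic flow of this operator: setting $\tilde c=c\circ\Phi$, the chain rule gives $\partial_s\tilde c(s,t)=\partial_1 c(\Phi(s,t))-E_{12}'(s)\,\partial_2 c(\Phi(s,t))=h_0$, so $\tilde c(s,t)=h_0 s+G(t)$ for some $G$. Because $\tilde c\in\{-1,+1\}$ almost everywhere, $h_0$ must vanish: otherwise, for each fixed $t$ the $s$-slice $\{s:h_0 s+G(t)\in\{-1,+1\}\}$ would contain at most two points, contradicting positive two-dimensional measure by Fubini. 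Hence $\tilde c(s,t)=g(t)$ depends only on $t$, and from $b=ac$ with $a$ depending only on $y_1=s$ we obtain $\tilde b(s,t)=e_{12}(s)\,g(t)$, which are exactly the asserted formulas. The main obstacle is the first paragraph: since $e$ is only $L^\infty$ (in fact piecewise constant), both compatibility identities must be read distributionally, and the additive split $a=f+g$ requires a careful duality argument on the convex set $U$; everything downstream (the pigeonhole dichotomy, the characteristic change of variables, and the Fubini argument killing $h_0$) is then essentially elementary.
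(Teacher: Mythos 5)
The paper itself does not prove Theorem~\ref{thm:1}: it is quoted as a result of the companion preprint~\cite{R}, and only the perturbed Theorem~\ref{thm:2} is proved here. With that caveat, your argument is correct and, more to the point, is exactly the stress-free template that the present paper's machinery is designed to mimic. Your distributional identity $\partial_1\partial_2 e_{12}=0$ is the $E_{elast}=0$ limit of Lemma~\ref{lem:OS-2}; the algebraic relation $e_{13}=e_{12}e_{23}$ (after normalizing away the common prefactor $\epsilon/(2d)$, so that the off-diagonal entries lie in $\{-1,1\}$) is precisely the identity $\tilde\chi_2=\tilde\chi_3\tilde\chi_1$ invoked in the proof of Proposition~\ref{prop:3}; the constancy of $\partial_1 e_{23}-\partial_2 e_{13}$ is the stress-free content of Lemma~\ref{lem:H-2per}; and your characteristic flow $\Phi$ and the two-valuedness/Fubini argument match Step 3 of Proposition~\ref{prop:3} and Step 3 of Proposition~\ref{prop:2}, respectively. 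Note also that the theorem's conclusion $(e_{13}\circ\Phi)=e_{12}(s)g(t)$, $(e_{23}\circ\Phi)=g(t)$ is only consistent with $e_{13}=e_{12}e_{23}/(\epsilon/(2d))$ if the prefactor has been normalized to $1$, so your normalized reading is the correct one.

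Two places deserve tightening. First, substituting the ansatz $u_i=e_{ii}y_i+g_i$ into the expressions for $2e_{13}$ and $2e_{23}$ only gives $2(\partial_1 e_{23}-\partial_2 e_{13})=\alpha_2'(y_1)-\alpha_1'(y_2)$ with $\alpha_i$ the $y_3$-derivative of $g_i$ (shown to be $y_3$-independent by matching $y_3$-dependences); to conclude that $\alpha_1'$ and $\alpha_2'$ are in fact constants you still need to loop back through the $e_{12}$ equation, which forces $\partial_2\alpha_1=-\partial_1\alpha_2$ to depend on $y_3$ alone, hence on nothing. It is cleaner, and matches what the paper does in Lemma~\ref{lem:H-2per}, to invoke the two Saint-Venant identities $\partial_1(\partial_1 e_{23}-\partial_2 e_{13})=0$ and $\partial_2(\partial_1 e_{23}-\partial_2 e_{13})=0$ directly, which follow from the constancy of the diagonal entries and the $y_3$-independence of $e$. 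Second, as you already flag, the passage from $\partial_1\partial_2 a=0$ in $\mathcal{D}'$ to the additive split $a=f(y_1)+g(y_2)$ for $a\in L^\infty$ on a convex $U$, and the a.e.\ chain rule through the bi-Lipschitz $\Phi$, both require a mollification argument; these are exactly the technical points the paper addresses via Lemma~\ref{lem:OS-1} and in Step 3 of the proof of Proposition~\ref{prop:3}. With those two points made rigorous, the proof is complete.
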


In this model involving only four (two-dimensional) strains, we do not have to require additional BV-regularity for the strain tensors.\\
\medskip

\subsection{The Setting of Small Deviations from the Stress-free Situation}
In the present paper we are interested in the rigidity of these constructions. In the spirit of the papers \cite{CO08}, \cite{CO10} of Capella and Otto, we introduce a variational model consisting of an elastic
and a surface energy contribution rescaled in an optimal manner. 
More precisely, we use an elastic energy of the form
\begin{equation*}
 E_{elast} = 2 \mu\int\limits_{B_{L}}{\left|e - \chi_{1}e^{(1)} - \chi_{2}e^{(2)} - \chi_{3}e^{(3)} - \chi_{4}e^{(4)} \right|^{2}dy},
\end{equation*} 
where $\chi_{k}$ are the characteristic functions of the martensitic phases, i.e. $\chi_{k}\in \{ 0, 1 \}$, 
$\mu$ is a material constant -- the first Lam\'e-constant -- of the dimension $\frac{J}{m^{3}}$,
and $B_{L}\subset \mathbb{R}^{3}$ denotes the sample.
As we are dealing with the analogue of the stress-free setting, we will assume that all quantities involved only depend on the $y_{1},y_{2}$ variables. The elastic energy can be rewritten in terms
of the modified characteristic functions $\tilde{\chi}_{k}$ which are defined as
\begin{align*}
&\tilde{\chi}_{1} = 1 - 2(\chi_{2} + \chi_{3}),\\
&\tilde{\chi}_{2} = 1 - 2(\chi_{3} + \chi_{4}),\\
&\tilde{\chi}_{3} = 1 - 2(\chi_{2} + \chi_{4}),\\
&\chi_{1} + \chi_{2} + \chi_{3} + \chi_{4} = 1, \;\; \chi_{k}\in \{0, 1\}, \;\; \tilde{\chi}_{k}\in\{-1,1\}.
\end{align*}
With these the elastic energy takes the form:
$$ E_{elast}(e) = 2  \mu\int\limits_{B_{L}}{\left|e - \frac{\epsilon}{2d}
\begin{pmatrix}
d_{1} & \tilde{\chi}_{3} & \tilde{\chi}_{2}\\
\tilde{\chi}_{3} & d_{2} &\tilde{\chi}_{1}\\
\tilde{\chi}_{2} & \tilde{\chi}_{1} & d_{3}\\
\end{pmatrix}
 \right|^{2}dy},\;  e=e(u)(y_{1}, y_{2}), $$
where $d, d_{1}, d_{2}, d_{3}$ are the constants obtained from the change of coordinates in the stress-free setting.\\
As it is well known that this energy is not weakly lower semicontinuous, we cannot hope to prove rigidity in this framework. Thus, a surface energy punishing high oscillations
is introduced:
\begin{align*}
E_{surf} = \kappa \int\limits_{B_{L}}{|\nabla \chi_{1}| + |\nabla \chi_{2}| + |\nabla \chi_{3}| + |\nabla \chi_{4}|dy}.
\end{align*}
Here $\kappa$ is a further material parameter of the units $\frac{J}{m^{2}}$.
Working with these two energy contributions it is known that there are two regimes for incompatible microstructures. In \cite{KM}, Kohn and M\"uller point out that the choice of the 
regime depends on a single non-dimensional quantity:
$\eta:= \frac{2d^{2}\kappa}{\epsilon^{2}\mu L}$. On the one hand, if $\eta\gg 1$, low-energy incompatible microstructures are characterized by fine-scale oscillations of twins and an energy contribution
scaling as $ E \sim (\kappa L^{2})^{\frac{1}{2}}(2\frac{\epsilon^{2}}{4d^{2}}\mu L^{3})^{\frac{1}{2}} $. If on the other hand $\eta \ll 1$, branching is energetically preferred and yields a scaling behavior of 
$E \sim \left(\kappa L^{2} \right)^{\frac{2}{3}}\left(2\frac{\epsilon^{2}}{4d^{2}}L^{3}\mu\right)^{\frac{1}{3}}$.
Rescaling all quantities by their natural units and energy so as to capture the regime of $\eta \ll 1$, we have:
\begin{align*}
& y = L \hat{y}, \\
& e = \frac{\epsilon}{2d} \hat{e},\\
& E_{elast} = \frac{\epsilon^{2}L^{3}\mu}{2d^{2}}\hat{E}_{elast},\\
& E_{surf} = \kappa L^{2} \hat{E}_{surf},\\
& E = \left( \frac{2 d^{2}\kappa}{ \epsilon^{2} \mu L}\right)^{\frac{2}{3}} \frac{\epsilon^{2}}{2 d^{2}}L^{3}\mu\hat{E}.
\end{align*}
With this we can point out the relevant quantities in their non-dimensional versions:

\medskip

\begin{defi}
\label{def:11}
We are interested in
\begin{itemize}
	\item the strains $e = \frac{\nabla u + (\nabla u)^{t}}{2}$, where 
	$u:\mathbb{R}^{3}\rightarrow \mathbb{R}^{3}$, $u\in W^{1,\infty}(\mathbb{R}^{3}, \mathbb{R}^{3})$ and
	$e^{(i)}= e^{(i)}(y_{1},y_{2})$,
	\begin{align*}
	&{e}^{(1)} = \frac{\epsilon}{2d} 
	\begin{pmatrix}
	\phantom{-}d_{1} & \phantom{-}1 & \phantom{-}1\\
	\phantom{-}1 & \phantom{-}d_{2} & \phantom{-}1 \\
	\phantom{-}1 & \phantom{-}1& \phantom{-}d_{3} 
	\end{pmatrix},\;\;
	{e}^{(2)} = \frac{\epsilon}{2d}
	\begin{pmatrix}
	\phantom{-}d_{1} & -1 & \phantom{-}1\\
	-1 & \phantom{-}d_{2} & -1 \\
	\phantom{-}1 & -1& \phantom{-}d_{3} 
	\end{pmatrix}, \\
	&{e}^{(3)} = \frac{\epsilon}{2d}
	\begin{pmatrix}
	\phantom{-}d_{1} & \phantom{-}1 & -1\\
	\phantom{-}1 & \phantom{-}d_{2} & -1 \\
	-1 & -1& \phantom{-}d_{3} 
	\end{pmatrix}, \;\;
	{e}^{(4)} = \frac{\epsilon}{2d}
	\begin{pmatrix}
	\phantom{-}d_{1} & -1 & -1\\
	-1 & \phantom{-}d_{2} & \phantom{-}1 \\
	-1 & \phantom{-}1& \phantom{-}d_{3} 
	\end{pmatrix}, \;\;
	\end{align*}
	where $d^{-1}= 6\delta^{2},\; d_{1}= -\frac{1}{3} ,\; d_{2}= \frac{3}{2\delta^{2}} ,\; d_{3}= -\frac{1}{3} $, 
	\item the characteristic functions $\chi_{i}(y_{1}, y_{2}) \in  \{0,1\}$, $\; i\in \{1,\cdots, 4\},$ with
	$\chi_{1} + \chi_{2} + \chi_{3} + \chi_{4} = 1 $
	for the martensite phases,
	\item the modified characteristic functions
	\begin{align*}
	&\tilde{\chi}_{1} = 1 - 2(\chi_{2} + \chi_{3}),\\
	&\tilde{\chi}_{2} = 1 - 2(\chi_{3} + \chi_{4}),\\
	&\tilde{\chi}_{3} = 1 - 2(\chi_{2} + \chi_{4}),\\
	&\tilde{\chi}_{i}\in\{-1,1\}.
	\end{align*}
	These can be reformulated in terms of the original characteristic functions:
	\begin{align*}
	\chi_{1}= \frac{1}{4}(1 + \tilde{\chi}_{2} + \tilde{\chi}_{3} + \tilde{\chi}_{1}),\\
	\chi_{2}= \frac{1}{4}(1 + \tilde{\chi}_{2} - \tilde{\chi}_{3} - \tilde{\chi}_{1}),\\
	\chi_{3}= \frac{1}{4}(1 - \tilde{\chi}_{2} + \tilde{\chi}_{3} - \tilde{\chi}_{1}),\\
	\chi_{4}= \frac{1}{4}(1 - \tilde{\chi}_{2} - \tilde{\chi}_{3} + \tilde{\chi}_{1}),
	\end{align*}
	\item the parameter $\eta:= \frac{2d^{2}\kappa}{\epsilon^{2}\mu L}$,
	\item the elastic energy
	\begin{align*}
	E_{elast} = \int\limits_{B_{1}}{\left|
	e - 
	\begin{pmatrix}
	d_{1} & \tilde{\chi}_{3} & \tilde{\chi}_{2}\\
	\tilde{\chi}_{3} & d_{2} &\tilde{\chi}_{1}\\
	\tilde{\chi}_{2} & \tilde{\chi}_{1} & d_{3}
	\end{pmatrix}
	\right|^{2}dy}, \; e= e(y_{1}, y_{2}),
	\end{align*}
	\item the surface energy
	\begin{equation*}
	{E}_{surf} = \int\limits_{B_{1}}{(|\nabla \chi_{1}| + |\nabla \chi_{2}| + |\nabla \chi_{3}| + |\nabla \chi_{4}|)dy},
	\end{equation*}
	\item the total energy
	\begin{equation*}
	 E:= \eta^{\frac{1}{3}}E_{surf} + \eta^{-\frac{2}{3}}E_{elast}.
	\end{equation*}
	We will always suppose $E\leq1$ in the sequel. If we want to stress the dependence on $\eta$, $\chi$ or $e$ we will also use the notation
	$E_{\eta}(e,\chi)$.
\end{itemize}
\end{defi}

\medskip

We remark that one should think of $\tilde{\chi}_{1}, \tilde{\chi}_{2}, \tilde{\chi}_{3}$ as corresponding to $e_{23}, e_{13}, e_{12}$.
Mimicking the proof of the stress-free case, c.f. \cite{R}, we show that the configurations are rigid -- at least in a weak sense:

\medskip

\begin{thm}
\label{thm:2}
\begin{enumerate}
\item
Let $\tilde{\chi}_{i}, \; E$ be as in Definition \ref{def:11}, let $\eta \leq 1$. Then there exists $r>0$ and functions $f_{(100)}, \; f_{(010)}$, such that: 
\begin{eqnarray*}
\int\limits_{B_{r}}{|\tilde{\chi}_{3} - f_{(100)}|dx} \lesssim  E^{\frac{1}{2}} \;
\mbox{ or } \; \int\limits_{B_{r}}{|\tilde{\chi}_{3} - f_{(010)}|dx} \lesssim  E^{\frac{1}{2}}, 
\end{eqnarray*}
where $f_{(100)}= f(y_{1})\in\{-1,1\},$ $f_{(010)}= f(y_{2})\in \{-1,1\}$.
\item
Assume $\tilde{\chi}_{1}, \tilde{\chi}_{2}, \tilde{\chi}_{3}$ to be as in Definition \ref{def:11} but now 
$\tilde{\chi}_{i}: \mathbb{T}^{2} \rightarrow \{-1,1\}$, $\mathbb{T}^{2}:= \mathbb{R}^{2}/ \mathbb{Z}^{2}$ and suppose that $e$ one-periodic in any coordinate direction and
\begin{align*}
&{E}_{elast} = \int\limits_{\mathbb{T}^{2}}{\left|e - 
\begin{pmatrix}
d_{1} & \tilde{\chi}_{3} & \tilde{\chi}_{2}\\
\tilde{\chi}_{3} & d_{2} &\tilde{\chi}_{1}\\
\tilde{\chi}_{2} & \tilde{\chi}_{1} & d_{3}\\
\end{pmatrix}
 \right|^{2}dx}, \, e= e(y_{1}, y_{2}).
\end{align*} 
Further assume that $\theta_{i}:= \langle \chi_{i} \rangle := \int\limits_{\mathbb{T}^{2}}\chi_{i}dx $ and that the first case in (1) holds (i.e. $\tilde{\chi}_{3} \sim f_{(100)}$). \\
Then
\begin{align*}
 |\theta_{1}(\theta_{2} + \theta_{4}) - \theta_{4}(\theta_{1} + \theta_{3})| \lesssim E^{\frac{1}{4}}.
\end{align*}
\item Let the same assumptions as in (2) be satisfied. Suppose $\Phi(s,t)=(s,-F_{(100)}(s)+t)$ with $F_{(100)}^{\prime}(s) = f_{(100)}(s)$ a.e.,
$F_{(100)}(0)=0.$ \\
Then there exists $g: \Phi^{-1}\left(\left[-\frac{1}{2},\frac{1}{2}\right]^{2}\right) \rightarrow \mathbb{R}$, $(s,t)\mapsto g(t)$, such that
\begin{align*}
 \left\| \tilde{\chi}_{1}\circ\Phi - g \right\|^{2}_{L^{2}(\Phi^{-1}([-\frac{1}{2},\frac{1}{2}]^{2}))} \lesssim \eta^{-\frac{2}{9}}E^{\frac{5}{6}}
\end{align*}
and 
\begin{align*}
 \left\| \tilde{\chi}_{2}\circ\Phi - (f_{(100)}\circ\Phi)g \right\|_{L^{2}(\Phi^{-1}([-\frac{1}{2},\frac{1}{2}]^{2}))} \lesssim  \max\left\{ \eta^{-\frac{1}{9}}E^{\frac{5}{12}}, E^{\frac{1}{4}} \right\}.
\end{align*}
\end{enumerate}
Due to symmetry, similar results hold in the case $e=e(y_{1}, y_{3})$ or $e=e(y_{2},y_{3})$.
\end{thm}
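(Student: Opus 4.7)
The plan is to follow the three-step structure of the stress-free Theorem \ref{thm:1}, turning each pointwise consequence of Saint-Venant compatibility into an approximate statement whose error is quantitatively controlled by the rescaled energy $E$, and then combining this with the BV-information coming from the surface energy. Throughout I use $E_{elast}\lesssim \eta^{2/3}E$ and $E_{surf}\lesssim \eta^{-1/3}E$, which are immediate from the definition of $E$.

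For part (1), the two-dimensional Saint-Venant identity $\partial_1^2 e_{22}+\partial_2^2 e_{11}=2\,\partial_1\partial_2 e_{12}$ holds distributionally for $e=e(y_1,y_2)$. Substituting the target values $d_1, d_2, \tilde{\chi}_3$ for $e_{11}, e_{22}, e_{12}$ and absorbing the errors in $H^{-2}$ via the elastic energy yields a bound on $\partial_1\partial_2\tilde{\chi}_3$ of the order $\eta^{1/3}E^{1/2}$. Combined with the BV-control of $\tilde{\chi}_3$ and the structural constraint $\tilde{\chi}_3\in\{-1,1\}$, I invoke a Capella--Otto-type rigidity lemma: a $\{-1,1\}$-valued BV function on a ball whose mixed derivative is small in a negative-Sobolev norm is $L^1$-close to a function of a single coordinate, giving the dichotomy between $f_{(100)}(y_1)$ and $f_{(010)}(y_2)$. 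Optimizing the interpolation produces the rate $E^{1/2}$.

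For part (3), assuming the first alternative in (1), I pass to the $(s,t)$-coordinates of $\Phi$, which has unit Jacobian and produces $\partial_{y_1}=\partial_s+f_{(100)}(s)\partial_t$ and $\partial_{y_2}=\partial_t$. The pointwise identity $\tilde{\chi}_2=\tilde{\chi}_1\tilde{\chi}_3$, verifiable case by case from Definition \ref{def:11}, together with $\tilde{\chi}_3\approx f_{(100)}$ from (1) yields $\tilde{\chi}_2\approx f_{(100)}\tilde{\chi}_1$. Substituting this into the three-dimensional compatibility identities $\partial_1\partial_2 e_{23}=\partial_2^2 e_{13}$ and $\partial_1^2 e_{23}=\partial_1\partial_2 e_{13}$ (both consequences of $\partial_3 e=0$) and expanding in $(s,t)$-coordinates with $f^2=1$, the leading-order contributions cancel and I extract the approximate vanishings $\partial_s\partial_t(\tilde{\chi}_1\circ\Phi)\approx 0$ and $\partial_s^2(\tilde{\chi}_1\circ\Phi)\approx 0$ in negative-Sobolev norms on $\Phi^{-1}([-\tfrac12,\tfrac12]^2)$. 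A further interpolation against the BV-control of $\tilde{\chi}_1$ produces the $L^2$-approximation $\tilde{\chi}_1\circ\Phi\approx g(t)$ at the stated rate. The bound on $\tilde{\chi}_2\circ\Phi$ then follows from $\tilde{\chi}_2=\tilde{\chi}_1\tilde{\chi}_3$ and the triangle inequality, the $E^{1/4}$ contribution coming from $\|\tilde{\chi}_3-f_{(100)}\|_{L^2}^2\leq 2\|\tilde{\chi}_3-f_{(100)}\|_{L^1}\lesssim E^{1/2}$.

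Finally for part (2), a direct algebraic manipulation using $\tilde{\chi}_2=\tilde{\chi}_1\tilde{\chi}_3$ and the formulas of Definition \ref{def:11} gives the identity $\theta_1\theta_2-\theta_3\theta_4=\tfrac14(\langle\tilde{\chi}_1\tilde{\chi}_3\rangle-\langle\tilde{\chi}_1\rangle\langle\tilde{\chi}_3\rangle)$, so that the quantity in question is precisely a covariance on $\mathbb{T}^2$. Writing $\tilde{\chi}_3=f_{(100)}(y_1)+(\tilde{\chi}_3-f_{(100)})$, the second piece is absorbed at rate $E^{1/2}$ by part (1), and the first reduces to $\int_{\mathbb{T}}f_{(100)}(y_1)(h(y_1)-\langle h\rangle)\,dy_1$ with $h(y_1):=\int_{\mathbb{T}}\tilde{\chi}_1(y_1,y_2)\,dy_2$; a fibre-wise version of the argument from (3) controls $\|h-\langle h\rangle\|_{L^2(\mathbb{T})}$ and Cauchy--Schwarz yields the rate $E^{1/4}$. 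The principal obstacle throughout is this interpolation step between the negative-Sobolev bound from compatibility and the BV-bound from surface energy, performed anew at each stage: the precise exponents $E^{1/2}, E^{1/4}, \eta^{-2/9}E^{5/6}, \eta^{-1/9}E^{5/12}$ emerge from optimizing this interpolation, and they require careful bookkeeping of the Jacobian and of the boundary contributions in the $(s,t)$-coordinates, where $\Phi$ does not in general descend to $\mathbb{T}^2$, so that the estimates in (3) are stated on $\Phi^{-1}([-\tfrac12,\tfrac12]^2)$ rather than on the full torus.
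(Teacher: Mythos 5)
Your proposal follows the same broad architecture as the paper: Saint--Venant compatibility gives a negative-Sobolev bound, which is then interpolated against the BV control from the surface energy, and the algebraic identity $\tilde{\chi}_{2}=\tilde{\chi}_{1}\tilde{\chi}_{3}$ together with the change of variables $\Phi$ does the rest. Part (1) is essentially the paper's Lemmas~\ref{lem:OS-2}--\ref{lem:discrwave} and Proposition~\ref{prop:2} (compatibility $\to$ $H^{-2}$-control $\to$ finite differences $\to$ interpolation $\to$ wave argument), and your exponent bookkeeping matches. Your reformulation of part (2) via the covariance identity
\begin{align*}
\theta_{1}(\theta_{2}+\theta_{4})-\theta_{4}(\theta_{1}+\theta_{3})=\theta_{1}\theta_{2}-\theta_{3}\theta_{4}
=\tfrac{1}{4}\bigl(\langle\tilde{\chi}_{1}\tilde{\chi}_{3}\rangle-\langle\tilde{\chi}_{1}\rangle\langle\tilde{\chi}_{3}\rangle\bigr)
\end{align*}
is correct and is in fact a cleaner packaging than the paper's direct expansion of $(\ref{eq:P5.1})$--$(\ref{eq:P5.2})$, which after inserting $\tilde{\chi}_{2}=\tilde{\chi}_{1}\tilde{\chi}_{3}$ collapses to exactly this covariance. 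Where the paper uses an uncorrelatedness lemma for the approximating pair $(f_{(100)},\partial_{2}\tilde{g})$ combined with the $H^{-1}_{full}$ estimate of Proposition~\ref{prop:3}(2), your route integrates out $y_2$ and bounds $\|h-\langle h\rangle\|_{L^2(\mathbb{T})}$ directly.

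Two points need sharpening. First, in part (2) your phrase ``a fibre-wise version of the argument from (3)'' is where the rate is decided: if you extract $h$-control from the $L^{2}$-estimate of part (3) you only get $\|\tilde{\chi}_{1}\circ\Phi-g\|_{L^{2}}\lesssim\eta^{-1/9}E^{5/12}$ and hence a weaker (and $\eta$-dependent) rate, not $E^{1/4}$. To reach $E^{1/4}$ you must use the \emph{first-order} $H^{-1}$-control of $\partial_{1}\tilde{\chi}_{1}-\partial_{2}(f_{(100)}\tilde{\chi}_{1})$ (the analogue of the paper's Step 1 in Proposition~\ref{prop: 5} and of the $H^{-1}_{full}$ estimate in Proposition~\ref{prop:3}(2)), restrict it to the $k_{2}=0$ Fourier modes, and note this gives $\|h'\|_{H^{-1}(\mathbb{T})}\sim\|h-\langle h\rangle\|_{L^{2}(\mathbb{T})}\lesssim E^{1/4}$. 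Second, in part (3) you propose to extract two second-order approximate identities $\partial_{s}\partial_{t}(\tilde{\chi}_{1}\circ\Phi)\approx 0$ and $\partial_{s}^{2}(\tilde{\chi}_{1}\circ\Phi)\approx 0$, whereas the paper works with the single first-order quantity $\partial_{s}(\tilde{\chi}_{1}\circ\Phi)=\bigl(\partial_{1}\tilde{\chi}_{1}-\partial_{2}(f_{(100)}\tilde{\chi}_{1})\bigr)\circ\Phi$ controlled in $H^{-1}$, passes to finite differences in $s$, and interpolates. On the torus these are equivalent up to constants, but the first-order formulation avoids re-deriving the mean-zero normalization and makes the finite-difference and periodization step transparent; you also do not mention the Helmholtz/Leray decomposition (Lemmas~\ref{lem:grad}, Prop.~\ref{prop:3}), which the paper needs to produce the potential $u$ and, via Poincar\'e in characteristic coordinates, the function $g$ whose periodization $\tilde{g}$ defines the crossing-twin target used in part (2). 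With those two clarifications your outline is a correct alternative exposition of the same proof.
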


\medskip

In this context the notation $A \lesssim B$ is used to denote that there exists a universal constant $C>0$ such that
$A \leq C B$.

As rigidity estimates always imply a lower bound on the scaling of incompatible microstructures (i.e. structures which do not display the right distribution of volume fractions),
and as the rigidity estimate can be complemented by an upper bound construction for incompatible microstructures, it is possible to prove the optimal scaling of incompatible microstructures. We obtain
\medskip

\begin{prop}
\label{cor:1}
Let $\eta \ll 1$. Then there exits a family of strains $e_{\eta}$ with
\begin{align*}
& |\theta^{\eta}_{1}(\theta^{\eta}_{2} + \theta^{\eta}_{4}) - \theta^{\eta}_{4}(\theta^{\eta}_{1}+\theta^{\eta}_{3})| \geq \frac{1}{5}, \\
& |\theta^{\eta}_{1}(\theta^{\eta}_{2} + \theta^{\eta}_{4}) - \theta^{\eta}_{2}(\theta^{\eta}_{1}+\theta^{\eta}_{3})| \geq \frac{1}{5}.
\end{align*} 
such that
\begin{align*}
  E_{\eta}(e_{\eta},\chi^{\eta}) \leq c
\end{align*}
for $c>0$ independent of $\eta$.
In particular, for such strains it holds
\begin{align*}
 \frac{1}{c} \leq \min
  E_{\eta}(e_{\eta},\chi^{\eta}) \leq c
\end{align*}
for $c>0$ independent of $\eta$.
\end{prop}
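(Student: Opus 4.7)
The statement combines an upper bound (a construction) and a matching lower bound. The lower bound $\min E_{\eta} \geq 1/c$ is an immediate corollary of Theorem~\ref{thm:2}: by the dichotomy in part (1), either $\tilde{\chi}_{3} \sim f_{(100)}$ or $\tilde{\chi}_{3} \sim f_{(010)}$, and in the first case part (2) gives $|\theta_{1}(\theta_{2}+\theta_{4}) - \theta_{4}(\theta_{1}+\theta_{3})| \lesssim E^{1/4}$, while in the second case the same argument with the roles of $y_{1}$ and $y_{2}$ interchanged yields $|\theta_{1}(\theta_{2}+\theta_{4}) - \theta_{2}(\theta_{1}+\theta_{3})| \lesssim E^{1/4}$. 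Under the two assumed lower bounds on these quantities, whichever case of the dichotomy applies, one of them must be $\lesssim E^{1/4}$, forcing $E \gtrsim 1$.

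The bulk of the work is the upper bound. I would build an explicit family $(\chi^{\eta}, e^{\eta})$ realizing prescribed volume fractions that violate \emph{both} crossing-twin compatibility relations $\theta_{1}\theta_{2} = \theta_{3}\theta_{4}$ (from case~(b) of Figure~\ref{fig:1}) and $\theta_{1}\theta_{4} = \theta_{2}\theta_{3}$ (from case~(a)). A concrete choice is $(\theta_{1},\theta_{2},\theta_{3},\theta_{4}) = (\tfrac{1}{2},\tfrac{1}{4},0,\tfrac{1}{4})$, for which both quantities equal $\tfrac{1}{8}$, a positive constant independent of $\eta$. To realize these fractions, I would split $\mathbb{T}^{2}$ into two strips of equal area, place a fine-scale simple $(1,2)$-twin of ratio $1{:}1$ in one and a fine-scale simple $(1,4)$-twin of ratio $1{:}1$ in the other.

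Each twin is individually stress-free, so the only elastic cost comes from the mismatch between the two averaged strains $\tfrac{1}{2}(e^{(1)}+e^{(2)})$ and $\tfrac{1}{2}(e^{(1)}+e^{(4)})$ across the two interfaces between the strips (one genuine interface, and its periodic partner). In the regime $\eta \ll 1$ this mismatch is absorbed by a self-similar Kohn--M\"uller branching boundary layer of width $h_{\eta}$ on each side of the interface, in which the lamination period is geometrically refined from the bulk scale down to a fine scale $\sim h_{\eta}$ at the interface. A direct geometric-series computation then gives $E_{surf} \sim h_{\eta}^{-1}$ and $E_{elast} \sim h_{\eta}^{2}$ per unit interface length, so that the choice $h_{\eta} \sim \eta^{1/3}$ balances the two contributions in $E_{\eta} = \eta^{1/3}E_{surf} + \eta^{-2/3}E_{elast}$ and gives total energy of order one. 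The boundary layer perturbs the target volume fractions only by $O(h_{\eta})$, which is harmless.

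The main obstacle is writing down the branching construction explicitly in this setting: unlike the classical Kohn--M\"uller cusp, one has to interpolate between two \emph{different} twin orientations (the $(1,2)$-twin and the $(1,4)$-twin) rather than between a twin and a uniform phase. The required construction uses a sequence of intermediate nested wedges in which the fine structure switches from the $(1,2)$-laminate to the $(1,4)$-laminate through an intermediate region while a continuous, piecewise affine displacement $u^{\eta}$ is maintained and the pointwise constraints $\chi_{i} \in \{0,1\}$, $\sum_{i}\chi_{i}=1$, are respected. Once this explicit Lipschitz displacement is on paper, the verification of $E_{\eta}\leq c$ reduces to a bookkeeping estimate of elementary integrals over the wedges.
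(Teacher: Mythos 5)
Your plan correctly splits the task into a lower bound (via Theorem~\ref{thm:2}, same as the paper) and an upper bound via an explicit branching construction (same strategy as Lemma~\ref{lem:branch}). The lower bound argument is fine. However, your specific upper bound construction has a genuine gap that branching cannot repair.

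\textbf{The macroscopic compatibility issue.} You propose two horizontal strips of equal area, a fine $(1,2)$-twin at ratio $1{:}1$ in one and a fine $(1,4)$-twin at ratio $1{:}1$ in the other, and you attribute the only elastic cost to a mismatch ``across the two interfaces'' that the Kohn--M\"uller boundary layer absorbs. This is not what happens. Branching oscillates in $y_{1}$, so it only affects the $k_{1}\neq 0$ Fourier modes; it cannot touch the $y_{1}$-averaged ($k_{1}=0$) field. Looking at the Fourier multiplier the paper derives in Step~2 of Lemma~\ref{lem:branch} (display~(\ref{eq:Fourier})), the modes with $k_{1}=0$, $k_{2}\neq 0$ contribute $|\mathcal{F}\tilde{\chi}_{2}(0,k_{2})|^{2}$ to $E_{elast}$ with \emph{no} smallness factor. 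Hence the construction must satisfy the macroscopic compatibility condition that the $y_{1}$-average of $\tilde{\chi}_{2}$ be constant in $y_{2}$ (up to a boundary layer). Now recall (following Definition~\ref{def:11}) that $\tilde{\chi}_{2}=+1$ on phases $1$ and $2$, and $\tilde{\chi}_{2}=-1$ on phases $3$ and $4$. In a $(1,2)$-laminate the $y_{1}$-average of $\tilde{\chi}_{2}$ is identically $+1$, regardless of the ratio; in a $(1,4)$-laminate at ratio $1{:}1$ it is $0$. So $\langle \tilde{\chi}_{2}\rangle_{y_{1}}(y_{2})$ jumps from $1$ to $0$ across the strip interface, which forces $E_{elast}\gtrsim 1$ from the $k_{1}=0$ modes alone, and hence $E_{\eta}\gtrsim\eta^{-2/3}\to\infty$. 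The same fact can be read off in strain language: $\frac{1}{2}(e^{(1)}+e^{(2)})-\frac{1}{2}(e^{(1)}+e^{(4)})=\frac{1}{2}(e^{(2)}-e^{(4)})$ is \emph{not} of the form $a\otimes e_{2}+e_{2}\otimes a$ (its $(1,3)$-entry is nonvanishing), so the averaged strains of your two strips are kinematically incompatible across any line $y_{2}=\mathrm{const}$, and no microstructural refinement near that line fixes a bulk incompatibility.

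\textbf{What the paper does instead.} The paper's construction takes a $(3,2)$-twin in the strip $y_{2}\in(0,\lambda)$ and a $(4,1)$-twin in $y_{2}\in(\lambda,1)$, and --- crucially --- uses the \emph{same} intra-strip fraction $\mu$ for the pair $\{$phase $2$, phase $1\}$ (both of which have $\tilde{\chi}_{2}=+1$) and the same fraction $1-\mu$ for the pair $\{$phase $3$, phase $4\}$ (both with $\tilde{\chi}_{2}=-1$). This makes the $y_{1}$-average of $\tilde{\chi}_{2}$ equal to $2\mu-1$ in \emph{both} strips; the strip-indicator pieces $\chi_{(0,\lambda)}$, $\chi_{(\lambda,1)}$ are deliberately placed only in $\tilde{\chi}_{1}$ and $\tilde{\chi}_{3}$, where they are killed by a $k_{1}$-factor in the multiplier. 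The incompatibility of the volume fractions comes from letting $\lambda\neq\tfrac{1}{2}$ and $\mu\neq\tfrac{1}{2}$, giving $\theta_{1}=\mu(1-\lambda)$, $\theta_{2}=\mu\lambda$, $\theta_{3}=(1-\mu)\lambda$, $\theta_{4}=(1-\mu)(1-\lambda)$. If you want to keep your two-strip strategy, you must choose the pairings and ratios so that the averaged $\tilde{\chi}_{2}$ agrees on the two sides; with a $(1,2)$-twin one side is pinned at $\tilde{\chi}_{2}$-average $+1$, which forces the other side to consist only of phases $1$ and $2$ as well.

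\textbf{A secondary issue.} Your choice $(\theta_{1},\theta_{2},\theta_{3},\theta_{4})=(\tfrac{1}{2},\tfrac{1}{4},0,\tfrac{1}{4})$ gives both incompatibility quantities equal to $\tfrac{1}{8}$, which is below the $\tfrac{1}{5}$ asserted in the proposition. (The paper's Lemma~\ref{lem:branch} itself only asserts the threshold $\delta=\tfrac{3}{16}$ and reduces to $\lambda=\mu$, which actually gives at most $\mu(1-\mu)(1-2\mu)\leq\frac{1}{6\sqrt{3}}\approx 0.096$; so there seems to be a constant issue in the paper too. But either way you should be aware that your construction does not reach the advertised $\tfrac{1}{5}$, and choosing the ratios freely as the paper does leaves more room to optimize.)

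Finally, the difficulty you flag --- ``interpolating between two different twin orientations'' at the interface --- is not where the real work is; the boundary layers on the two sides of the interface are built independently, exactly as in the paper. The real constraint is the macroscopic one described above, which must be enforced \emph{before} any branching.
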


\section{The Proofs}

\subsection{Rigidity of the Outer Structure}

In this section we prove the (strong) rigidity of the outer structure of the patterns. We proceed in four major steps: Firstly, we apply the compatibility conditions for strains
to obtain weak control of $\tilde{\chi}_{3}$ (c.f. Lemma \ref{lem:OS-2}). To be more precise, we prove control of certain second order derivatives of $\tilde{\chi}_{3}$ in $H^{-2}$. Although these
expressions display the right scaling, the existence of incompatible microstructures suggests that the weak control cannot immediately be converted into strong $L^{2}$-control. 
For that purpose we therefore follow the strategy paved in \cite{CO10}: In a second step we pass to finite differences so as to obtain weak $H^{-1}$-control of these (c.f. Lemma \ref{lem:OS-1}).
In the decisive step we then interpolate between $BV$ and $H^{-1}$ to obtain $L^{2}$-control of $\tilde{\chi}_{3}$ (c.f. Lemma \ref{lem:interpol}). Finally, we conclude as in the stress-free setting (c.f. \cite{R}) and use a wave argument
and the two-valuedness of $\tilde{\chi}_{3}$ (c.f. Proposition \ref{prop:2}).

\medskip

We begin with the weak $H^{-2}$-control:

\medskip

\begin{lem}
\label{lem:OS-2}
 There exist functions $\rho_{11}, \; \rho_{12}, \; \rho_{22}:B_{1}(0) \rightarrow \mathbb{R}$ such that
\begin{equation}
 \label{eq:compcon}
 \partial_{1}\partial_{2}\tilde{\chi}_{3} = \partial_{11}\rho_{11} + \partial_{1}\partial_{2}\rho_{12} + \partial_{22}\rho_{22} \; \mbox{ in } \mathcal{D}^{\prime} 
\end{equation}
and
\begin{equation}
 \int\limits_{B_{1}}{\rho_{11}^{2} + \rho_{12}^{2} + \rho_{22}^{2}dx} \lesssim E_{elast}.
\end{equation}
\end{lem}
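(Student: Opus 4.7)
The strategy is to exploit the Saint--Venant compatibility condition for symmetric gradients, which in two dimensions reduces to a single scalar identity, and then to identify the desired potentials $\rho_{ij}$ with the elastic deviations of $e$ from its target matrix.

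First I would observe that since $u \in W^{1,\infty}$ and $e_{ij} = \tfrac12(\partial_i u_j + \partial_j u_i)$, the distributional identity
\begin{equation*}
2\,\partial_{1}\partial_{2} e_{12} = \partial_{1}\partial_{2}(\partial_{1}u_{2} + \partial_{2}u_{1}) = \partial_{22}(\partial_{1}u_{1}) + \partial_{11}(\partial_{2}u_{2}) = \partial_{22} e_{11} + \partial_{11} e_{22}
\end{equation*}
holds in $\mathcal{D}'(B_1)$, using only the commutation of mixed partials for $L^{\infty}$ functions. This is the only component of the full Saint--Venant system that survives, because $e$ depends only on $(y_1,y_2)$, and all the relevant quantities carry only $\partial_1, \partial_2$ derivatives, so the argument sees a genuinely two-dimensional compatibility.

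Next I would split $e_{12} = \tilde\chi_{3} + (e_{12} - \tilde\chi_{3})$ and likewise $e_{11} = d_{1} + (e_{11} - d_{1})$, $e_{22} = d_{2} + (e_{22} - d_{2})$. Because $d_1, d_2$ are constants, they are killed by the second derivatives, and the compatibility identity rearranges to
\begin{equation*}
\partial_{1}\partial_{2}\tilde{\chi}_{3} = \tfrac{1}{2}\partial_{22}(e_{11} - d_{1}) + \tfrac{1}{2}\partial_{11}(e_{22} - d_{2}) - \partial_{1}\partial_{2}(e_{12} - \tilde{\chi}_{3}).
\end{equation*}
This immediately suggests setting
\begin{equation*}
\rho_{22} := \tfrac{1}{2}(e_{11} - d_{1}), \qquad \rho_{11} := \tfrac{1}{2}(e_{22} - d_{2}), \qquad \rho_{12} := -(e_{12} - \tilde{\chi}_{3}),
\end{equation*}
which yields \eqref{eq:compcon} in $\mathcal{D}'(B_1)$.

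Finally, the $L^2$ bound is essentially tautological: each $\rho_{ij}$ is, up to a factor of at most $1$, one of the entries of the matrix $e - M$ where $M$ is the target matrix appearing in $E_{elast}$. Hence
\begin{equation*}
\int_{B_{1}} (\rho_{11}^{2} + \rho_{12}^{2} + \rho_{22}^{2})\,dx \;\lesssim\; \int_{B_{1}} |e - M|^{2}\,dx \;=\; E_{elast}.
\end{equation*}
There is no real obstacle here; the only point to keep track of is that compatibility holds in the distributional sense (no regularity beyond $u\in W^{1,\infty}$ is needed), and that the constant diagonal entries $d_{1},d_{2}$ may be subtracted freely without changing the second derivatives. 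The genuine work of the paper begins afterwards, when this $H^{-2}$-type control has to be upgraded, via finite-difference and $BV$--$H^{-1}$ interpolation, to strong $L^2$ control of $\tilde\chi_{3}$.
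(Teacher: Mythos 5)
Your proposal is correct and follows the paper's argument essentially verbatim: the same Saint--Venant identity, the same splitting of $e_{11},e_{22},e_{12}$ into the constant/target part plus the elastic deviation, and the same choice of potentials $\rho_{11}=\tfrac12(e_{22}-d_2)$, $\rho_{12}=\tilde\chi_3-e_{12}$, $\rho_{22}=\tfrac12(e_{11}-d_1)$. The only cosmetic difference is that you derive the two-dimensional compatibility identity directly from commutation of mixed partials, whereas the paper simply quotes it.
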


\medskip

\begin{proof}[Proof of Lemma \ref{lem:OS-2}]
As $e$ is a strain the compatibility conditions must be satisfied, in particular:
\begin{equation*}
\partial^{2}_{11}e_{22} - 2\partial_{1}\partial_{2}e_{12} + \partial^{2}_{22}e_{11} = 0.
\end{equation*}
This can be exploited to derive the identity
\begin{equation*}
2\partial_{1}\partial_{2}\tilde{\chi}_{3} = \partial_{11}(e_{22} - d_{2}) + 2\partial_{1}\partial_{2}( \tilde{\chi}_{3} - e_{12}) + \partial_{22}(e_{11} - d_{1}),
\end{equation*}
where $d_{1}, \; d_{2}$ are the (constant) diagonal entries of the tensors describing the cubic-to-orthorhombic phase transition.
Setting
\begin{eqnarray*}
\rho_{11} &:=& \frac{ e_{22} - d_{2}}{2},\\
\rho_{12} &:=& \tilde{\chi}_{3} - e_{12},  \\
\rho_{22} &:=& \frac{e_{11} - d_{1}}{2},
\end{eqnarray*}
and noticing that these functions are some of the components of the elastic energy the claim follows.
\end{proof}

\medskip

In order to apply the interpolation inequality we have to argue via finite differences. Hence, for $v\in\mathbb{R}^{n}$, $h\in\mathbb{R}$ we set
\begin{align*}
 \partial_{v}^{h}f(x) := f(x+ hv) - f(x).
\end{align*}

\medskip

\begin{lem}
\label{lem:OS-1}
There exist functions $j_{11}, \; j_{22},\; j: B_{1}(0) \rightarrow \mathbb{R}$ such that
\begin{align*}
& \partial_{1}^{h_{1}}\partial_{2}^{h_{2}}\tilde{\chi}_{3} = \partial_{1}j_{11} + \partial_{2}j_{22} + j \mbox{ in } \mathcal{D}^{\prime},\\
& \int\limits_{B_{1}(0)}{j_{11}^{2} + j_{22}^{2} + j^{2}dx} \lesssim E_{elast}.
\end{align*}
\end{lem}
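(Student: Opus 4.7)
The plan is to upgrade the $H^{-2}$-identity of Lemma \ref{lem:OS-2} to an $H^{-1}$-identity by converting one derivative in each coordinate direction into a finite-difference operator. The key observation is that for any distribution $f$ and any $h_1,h_2\in\mathbb{R}$ one has
\begin{equation*}
\partial_1^{h_1}\partial_2^{h_2} f(x) = \int_0^{h_1}\!\!\int_0^{h_2} (\partial_1\partial_2 f)(x+s_1 e_1 + s_2 e_2)\,ds_1\,ds_2,
\end{equation*}
which is a trivial calculation for smooth $f$ and extends to distributions by standard mollification. Applied to $f=\tilde\chi_3$ this "integrates'' the identity of Lemma \ref{lem:OS-2} in both $x_1$ and $x_2$.

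More concretely, I would substitute the identity $\partial_1\partial_2\tilde\chi_3 = \partial_{11}\rho_{11} + \partial_1\partial_2\rho_{12} + \partial_{22}\rho_{22}$ into the above formula and rearrange each of the three terms:
\begin{itemize}
\item The $\partial_{11}\rho_{11}$-piece becomes $\partial_1 \int_0^{h_2}(\partial_1^{h_1}\rho_{11})(\,\cdot\, + s_2 e_2)\,ds_2 =: \partial_1 j_{11}$, because one of the two $s_1$-integrations pairs with one $\partial_1$ to produce a finite difference while the other $\partial_1$ can be pulled outside.
\item Symmetrically, the $\partial_{22}\rho_{22}$-piece becomes $\partial_2 j_{22}$ with $j_{22}:=\int_0^{h_1}(\partial_2^{h_2}\rho_{22})(\,\cdot\, + s_1 e_1)\,ds_1$.
\item The mixed piece $\partial_1\partial_2\rho_{12}$ converts directly into $j:=\partial_1^{h_1}\partial_2^{h_2}\rho_{12}$, no derivative remaining.
\end{itemize}
This gives exactly the structural decomposition required by the lemma.

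For the $L^2$-bounds I would use the Cauchy--Schwarz inequality together with translation invariance of the Lebesgue measure and the trivial estimate $\|\partial_i^{h_i} g\|_{L^2}\leq 2\|g\|_{L^2}$. Thus
\begin{equation*}
\|j_{11}\|_{L^2}^2 \leq h_2\int_0^{h_2}\!\!\|\partial_1^{h_1}\rho_{11}\|_{L^2}^2\,ds_2 \leq 4 h_2^2\,\|\rho_{11}\|_{L^2}^2,
\end{equation*}
and analogously $\|j_{22}\|_{L^2}^2\lesssim h_1^2\,\|\rho_{22}\|_{L^2}^2$ and $\|j\|_{L^2}^2\lesssim \|\rho_{12}\|_{L^2}^2$. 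Since $|h_1|,|h_2|\lesssim 1$ and $\|\rho_{11}\|_{L^2}^2+\|\rho_{12}\|_{L^2}^2+\|\rho_{22}\|_{L^2}^2\lesssim E_{elast}$ by Lemma \ref{lem:OS-2}, the desired bound follows.

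There is no serious obstacle: the real content is already in Lemma \ref{lem:OS-2}, and the present lemma is an algebraic rearrangement plus routine $L^2$-estimates. The only point requiring any care is the distributional interpretation of the formula $\partial_1^{h_1}\partial_2^{h_2} = \int\!\!\int \partial_1\partial_2\,T_{s_1 e_1+s_2 e_2}$, which one handles by mollifying $\tilde\chi_3$ and the $\rho_{ij}$, performing all manipulations for smooth functions, and passing to the limit using the $L^2$-bounds above (which are stable under mollification). The small conceptual point worth noting is that it is precisely the mixed-derivative term $\partial_1\partial_2\rho_{12}$ in Lemma \ref{lem:OS-2} that prevents the right-hand side from being a pure divergence and produces the extra $L^2$-term $j$; this is the reason the decomposition later has to be interpolated against $BV$ rather than being directly in $H^{-1}$-divergence form.
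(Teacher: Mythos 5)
Your proof is correct and follows essentially the same route as the paper: integrate the distributional identity from Lemma \ref{lem:OS-2} twice to convert the second-order derivatives on $\tilde\chi_3$ into finite differences, with the mixed term $\partial_1\partial_2\rho_{12}$ collapsing into the zeroth-order piece $j=\partial_1^{h_1}\partial_2^{h_2}\rho_{12}$ and the pure second-order terms $\partial_{11}\rho_{11},\ \partial_{22}\rho_{22}$ each yielding one surviving derivative acting on a translated $L^2$-average. Your explicit $L^2$-estimates via Cauchy--Schwarz and translation invariance are also the intended ones; you are in fact slightly cleaner than the paper's displayed formulas, which retain a leading $\partial_1$ (resp.\ $\partial_2$) inside $j_{11}$ (resp.\ $j_{22}$) that should be stripped off for consistency with the statement $\partial_1 j_{11}+\partial_2 j_{22}+j$.
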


\medskip

\begin{proof}[Proof of Lemma \ref{lem:OS-1}]
The claim of the lemma follows from an integration of the identity derived in Lemma \ref{lem:OS-2}:
We have
\begin{align*}
\partial_{1}\partial_{2}\tilde{\chi}_{3} &= \partial_{11}\rho_{11} + \partial_{1}\partial_{2}\rho_{12} + \partial_{22}\rho_{22}\\
\Leftrightarrow \partial_{1}\partial_{2}(\tilde{\chi}_{3} - \rho_{12}) &= \partial_{11}{\rho_{11}} + \partial_{22}\rho_{22}.
\end{align*}
Evaluating this expression at $y=x+h_{1}e_{1} + h_{2}e_{2}$ and reinterpreting the derivatives, we obtain 
\begin{align*}
& \frac{d}{dh_{1}}\frac{d}{dh_{2}}(\tilde{\chi}_{3} - \rho_{12})(x + h_{1}e_{1} + h_{2}e_{2})\\
&= \frac{d}{dh_{1}}\partial_{1}\rho_{11}(x + h_{1}e_{1} + h_{2}e_{2})
+ \frac{d}{dh_{2}}\partial_{2}\rho_{22}(x + h_{1}e_{1} + h_{2}e_{2}).
\end{align*}
This can be integrated to arrive at 
\begin{align*}
\MoveEqLeft{\partial_{1}^{h_{1}}\partial_{2}^{h_{2}}(\tilde{\chi}_{3} - \rho_{12})(x)
= \int\limits_{0}^{h_{2}}\int\limits_{0}^{h_{1}}{\frac{d}{dh^{\prime}_{1}}\frac{d}{dh^{\prime}_{2}}(\tilde{\chi}_{3} - \rho_{12})(x + h^{\prime}_{1}e_{1} + h^{\prime}_{2}e_{2})dh^{\prime}_{1}dh^{\prime}_{2}}}\\
= & \; \int\limits_{0}^{h_{2}}\int\limits_{0}^{h_{1}}{\frac{d}{dh^{\prime}_{1}}\partial_{1}\rho_{11}(x+h^{\prime}_{1}e_{1} + h^{\prime}_{2}e_{2})dh^{\prime}_{1}dh^{\prime}_{2}}\\
& + \int\limits_{0}^{h_{2}}\int\limits^{h_{1}}_{0}{\frac{d}{dh^{\prime}_{2}}\partial_{2}\rho_{22}(x+h^{\prime}_{1}e_{1} +h^{\prime}_{2}e_{2})dh^{\prime}_{1}dh^{\prime}_{2}}\\
= & \; \partial_{1}\int\limits_{0}^{h_{2}}{\rho_{11}(x + h_{1}e_{1} + h^{\prime}_{2}e_{2})dh^{\prime}_{2}}  - \partial_{1}\int\limits_{0}^{h_{2}}{\rho_{11}(x + h^{\prime}_{2}e_{2})dh^{\prime}_{2}}\\
& + \partial_{2}\int\limits_{0}^{h_{1}}{\rho_{22}(x + h^{\prime}_{1}e_{1} + h_{2}e_{2})dh^{\prime}_{1}} - \partial_{2}\int\limits_{0}^{h_{1}}{\rho_{22}(x + h_{1}^{\prime}e_{1})dh_{1}^{\prime}}.  
\end{align*}
Hence, the statement of the lemma follows with the functions
\begin{align*}
j_{11} &:= \partial_{1}\int\limits_{0}^{h_{2}}{\rho_{11}(x + h_{1}e_{1} + h_{2}^{\prime}e_{2})dh^{\prime}_{2}} 
- \partial_{1}\int\limits_{0}^{h_{2}}{\rho_{11}(x + h^{\prime}_{2}e_{2})dh^{\prime}_{2}},\\
j_{22} &:= \partial_{2}\int\limits_{0}^{h_{1}}{\rho_{22}(x + h^{\prime}_{1}e_{1} + h_{2}e_{2})dh^{\prime}_{1}}
- \partial_{2}\int\limits_{0}^{h_{1}}{\rho_{22}(x + h_{1}^{\prime}e_{1})dh_{1}^{\prime}},\\
j &:= \partial_{1}^{h_{1}}\partial_{2}^{h_{2}}\rho_{12}.
\end{align*} 
\end{proof}

\medskip

In the following lemma the transition from weak to strong norms is achieved via a well-known interpolation estimate, c.f. \cite{CO10}.

\medskip

\begin{lem}
\label{lem:interpol}
 Let $\eta\leq 1$. There exists a universal radius $r>0$ such that
\begin{equation*}
\sup\limits_{|h_{1}|, |h_{2}| \leq r}\int\limits_{B_{r}}{|\partial^{h_{1}}_{1}\partial^{h_{2}}_{2}\tilde{\chi}_{3}|dx} \lesssim \eta^{-\frac{2}{3}}E_{elast} + \eta^{\frac{1}{3}}E_{surf}.
\end{equation*}
\end{lem}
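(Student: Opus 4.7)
The plan is to implement a standard mollifier-based interpolation combining the $H^{-1}$-control from Lemma \ref{lem:OS-1} with the surface-energy $BV$-control of $\tilde{\chi}_{3}$, and to exploit that the second-order finite difference $f := \partial_{1}^{h_{1}}\partial_{2}^{h_{2}}\tilde{\chi}_{3}$ takes only the five discrete values $\{-4,-2,0,2,4\}$.

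First I would collect three preparatory bounds on $f$. From Lemma \ref{lem:OS-1}, $f = \partial_{1}j_{11} + \partial_{2}j_{22} + j$ with $\|j_{11}\|_{L^{2}}^{2} + \|j_{22}\|_{L^{2}}^{2} + \|j\|_{L^{2}}^{2} \lesssim E_{elast}$, whence $\|f\|_{H^{-1}(B_{1})}^{2} \lesssim E_{elast}$. Since $\tilde{\chi}_{3} = 1 - 2(\chi_{2}+\chi_{4})$ is of bounded variation with $\|D\tilde{\chi}_{3}\|_{\mathcal{M}} \lesssim E_{surf}$, and a finite difference can inflate the total variation by at most a bounded factor, $\|\nabla f\|_{\mathcal{M}} \lesssim E_{surf}$. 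Finally, as $\tilde{\chi}_{3} \in \{-1,1\}$, $f(x)$ is a sum of four $\pm 1$'s, so $f \in \{-4,-2,0,2,4\}$ and $|f|^{2} \geq 2|f|$ pointwise, yielding $\|f\|_{L^{1}} \leq \tfrac{1}{2}\|f\|_{L^{2}}^{2}$; this is what will convert an $L^{2}$-interpolation into the required $L^{1}$-estimate.

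For the interpolation I would fix a universal $r \in (0, 1/8]$ small enough that $B_{r}$, together with its $h$-translates ($|h_{i}| \leq r$) and $\delta$-neighborhoods ($\delta \in (0,r]$), stays inside $B_{1}$, and let $\rho_{\delta}$ be a standard mollifier. Writing $f = f_{\delta} + (f-f_{\delta})$ with $f_{\delta} := f \ast \rho_{\delta}$, the Plancherel-type bound $\sup_{\xi}|\xi|^{2}|\hat{\rho}_{\delta}(\xi)|^{2} \lesssim \delta^{-2}$ gives $\|f_{\delta}\|_{L^{2}}^{2} \lesssim \delta^{-2}\|f\|_{H^{-1}}^{2} \lesssim \delta^{-2} E_{elast}$, while the $BV$-estimate $\|f-f_{\delta}\|_{L^{1}} \lesssim \delta\|\nabla f\|_{\mathcal{M}}$ combined with $\|f-f_{\delta}\|_{L^{\infty}} \leq 8$ yields $\|f-f_{\delta}\|_{L^{2}}^{2} \lesssim \delta E_{surf}$. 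Adding these gives
\begin{equation*}
\|f\|_{L^{2}(B_{r})}^{2} \lesssim \delta^{-2} E_{elast} + \delta E_{surf}.
\end{equation*}
Optimizing in $\delta \in (0,r]$: the unconstrained minimum is $\lesssim E_{elast}^{1/3}E_{surf}^{2/3}$, and if the minimizer exceeds $r$, setting $\delta = r$ yields $\lesssim E_{elast} + E_{surf}$, which in that regime ($E_{surf} \lesssim E_{elast}$) reduces to $\lesssim E_{elast}$. Writing $E_{elast}^{1/3}E_{surf}^{2/3} = (\eta^{-2/3}E_{elast})^{1/3}(\eta^{1/3}E_{surf})^{2/3}$ and applying the weighted Young inequality $a^{1/3}b^{2/3} \leq \tfrac{1}{3}a + \tfrac{2}{3}b$, together with $E_{elast} \leq \eta^{-2/3}E_{elast}$ (valid since $\eta \leq 1$), one obtains $\|f\|_{L^{2}(B_{r})}^{2} \lesssim \eta^{-2/3}E_{elast} + \eta^{1/3}E_{surf}$ uniformly in $|h_{1}|,|h_{2}| \leq r$. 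The claim then follows from $\|f\|_{L^{1}} \leq \tfrac{1}{2}\|f\|_{L^{2}}^{2}$.

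The main delicate point I expect is the proper treatment of boundary effects in the interpolation: the Plancherel argument is most transparent on $\mathbb{R}^{2}$, so one needs to either extend $f$ with care (so that its $H^{-1}$-norm is not inflated) or localize via a cutoff on a slightly larger ball -- this is precisely the role of choosing a universal $r$ small enough to absorb the $h$- and $\delta$-neighborhoods, and is the standard maneuver used in \cite{CO10}. Everything else -- substituting the elastic and surface bounds and carrying out the weighted Young optimization -- is routine bookkeeping.
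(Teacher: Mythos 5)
Your proposal is correct and follows the same strategy as the paper: combine the $H^{-1}$-type control from Lemma \ref{lem:OS-1} with the $BV$/$L^{\infty}$-control via the interpolation inequality from \cite{CO10}, and then convert the resulting $L^{2}$-bound into an $L^{1}$-bound using the discreteness of $\partial_{1}^{h_{1}}\partial_{2}^{h_{2}}\tilde{\chi}_{3}$. The only difference is that you give a self-contained mollification proof of the interpolation inequality (including the two-regime optimization over $\delta$ and the localization to a small universal ball), whereas the paper simply invokes it as a known estimate from \cite{CO10}.
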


\medskip
\begin{proof}
We reason via the estimate
\begin{equation*}
\int\limits_{B_{r}}{f^{2}dx} \lesssim   \eta^{\frac{1}{3}}\int\limits_{B_{1}}{|\nabla f|dx}\sup|f| + \eta^{-\frac{2}{3}}\int\limits_{B_{1}}||\nabla|^{-1}f|^{2}dx,
\end{equation*}
applied to $\partial_{1}^{h_{1}}\partial_{2}^{h_{2}}\tilde{\chi}_{3}$. 
Using the discreteness of the values of $\tilde{\chi}_{3}$, we notice that the $L^{2}$-norm is equivalent to the $L^{1}$-norm and that the $L^{\infty}$-norm is bounded by a uniform constant.
Thus, we conclude
\begin{equation*}
\int\limits_{B_{r}}{|\partial_{1}^{h_{1}}\partial_{2}^{h_{2}}\tilde{\chi}_{3}| dx} \lesssim   \eta^{\frac{1}{3}}\int\limits_{B_{1}}{|\nabla \partial_{1}^{h_{1}}\partial_{2}^{h_{2}}\tilde{\chi}_{3} |dx}
 + \eta^{-\frac{2}{3}}\int\limits_{B_{1}}(|j|^{2}+ j_{0}^{2})dx, 
\end{equation*}
where $j= \begin{pmatrix}
           j_{1}\\ j_{2}
          \end{pmatrix}
$ and $j_{0}$ are given by Lemma \ref{lem:OS-1}.
\end{proof}

\medskip

Having established strong control, we can mimic the wave argument from the stress-free case. To simplify notation we use the following convention:

\medskip

\begin{defi}
Let $\{a,b\}\subset \mathbb{R}^{2}$ be a basis with dual basis given by $\{a^{\ast}, b^{\ast}\}$. 
For $f:\mathbb{R} \rightarrow \mathbb{R}$ define the following notation $f_{a^{\ast}}(x):= f(a^{\ast}\cdot x)$.
\end{defi}

\medskip

\begin{lem}
\label{lem:discrwave}
Let $\{a,b\}\subset \mathbb{R}^{2}$ be a basis. For all functions $f:B_{2}(0)\subset \mathbb{R}^{2} \rightarrow \mathbb{R}$ there exist $r>0$ and functions
$g_{a^{\ast}}, \; g_{b^{\ast}}$ such that
\begin{equation*}
\int\limits_{B_{r}}{|f - g_{a^{\ast}} - g_{b^{\ast}}|dx} \leq C(a,b,r) \sup\limits_{|h_{a}|, |h_{b}| \leq 1}\int\limits_{B_{1}}{|\partial_{a}^{h_{a}}\partial_{b}^{h_{b}}f|dx}.
\end{equation*}.
\end{lem}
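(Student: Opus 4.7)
The plan is to carry out a finite-difference analogue of the classical d'Alembert decomposition: solutions of $\partial_a\partial_b u = 0$ split as a sum of a function of $a^\ast\cdot x$ and a function of $b^\ast\cdot x$, and here the vanishing of the mixed derivative is replaced by smallness of the mixed finite difference. First, adapt the coordinates to the basis: setting $(s, t) = (a^\ast\cdot x, b^\ast\cdot x)$, so that $x = s a + t b$, and writing $F(s, t) := f(s a + t b)$, the change of variables is linear with constant Jacobian depending only on $\{a, b\}$. For any reference point $(s_0, t_0)$ the telescoping identity
\begin{equation*}
F(s,t) - F(s, t_0) - F(s_0, t) + F(s_0, t_0) = \partial_a^{s - s_0}\partial_b^{t - t_0} f(s_0 a + t_0 b)
\end{equation*}
holds pointwise, as is immediate on expanding the finite differences according to the convention $\partial_v^h g(y) = g(y + h v) - g(y)$; this is the discrete counterpart of the d'Alembert formula $\int_{s_0}^{s}\int_{t_0}^{t}\partial_a\partial_b f$.

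The next step is to average this identity over $(s_0, t_0) \in Q := [-R_0, R_0]^2$ for a small radius $R_0$ to be chosen. Setting
\begin{equation*}
g_{a^\ast}(s) := \frac{1}{2R_0}\int_{-R_0}^{R_0} F(s, t_0)\, dt_0 - \frac{C_0}{2}, \qquad g_{b^\ast}(t) := \frac{1}{2R_0}\int_{-R_0}^{R_0} F(s_0, t)\, ds_0 - \frac{C_0}{2},
\end{equation*}
with $C_0$ the double average of $F$ over $Q$, the averaged identity reads
\begin{equation*}
F(s,t) - g_{a^\ast}(s) - g_{b^\ast}(t) = |Q|^{-1} \int_Q \partial_a^{s - s_0}\partial_b^{t - t_0} f(s_0 a + t_0 b)\, ds_0\, dt_0.
\end{equation*}
By construction $g_{a^\ast}$ and $g_{b^\ast}$ are functions of $a^\ast\cdot x$ and $b^\ast\cdot x$ respectively, as required.

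Finally, to estimate the $L^1$-norm over $B_r$, the plan is to use Fubini to exchange the $x$- and $(s_0, t_0)$-integrations, to change variables $(h_a, h_b) := (s - s_0, t - t_0)$ in the inner integral, and to translate the $x$-variable so that the resulting domain lies inside $B_1$. If $r$ and $R_0$ are chosen small enough (depending on $\{a, b\}$ through the coordinate change), every shift appearing in the averaging satisfies $|h_a|, |h_b| \leq 1$ and every translated copy of the integration domain remains inside $B_1$. The inner integral is then bounded by $\int_{B_1} |\partial_a^{h_a}\partial_b^{h_b} f|\, dx$, and the outer integration over $(h_a, h_b)$ contributes only a multiplicative constant depending on $a, b, r$. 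The only genuinely delicate point is the bookkeeping of these geometric constraints; all such losses are absorbed into $C(a, b, r)$, and the rest of the argument is standard Fubini together with translation invariance.
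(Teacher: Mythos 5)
Your proposal is correct and follows essentially the same route as the paper: both rest on the four-point (telescoping) identity expressing the mixed finite difference as $f(sa+tb)-f(sa+t_0b)-f(s_0a+tb)+f(s_0a+t_0b)$, both average it over the auxiliary variable (you over the reference point $(s_0,t_0)$, the paper over the shift $h$, which after the substitution $y=x+h$ is the same thing), and both extract $g_{a^\ast}, g_{b^\ast}$ as the marginal averages corrected by the double average. The paper then runs the chain of inequalities top-down, starting from the supremum, passing to an average over shifts, and applying Jensen to pull the modulus inside; you run it bottom-up, starting from the pointwise identity, averaging, and then using Fubini and translation invariance — the two directions are equivalent. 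The only cosmetic difference is that you carry the general basis $\{a,b\}$ through the argument, whereas the paper reduces to $a=e_1$, $b=e_2$ at the outset; your geometric bookkeeping of the radii is sound and absorbs into $C(a,b,r)$ exactly as claimed.
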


\medskip

\begin{proof}[Proof of Lemma \ref{lem:discrwave}]
As $a,\;b$ form a basis, we can without loss of generality assume $a=e_{1}, \; b=e_{2}$. Furthermore, we can replace the balls by cubes. This yields the following estimate
\begin{align*}
\MoveEqLeft {\sup\limits_{|h_{1}|, |h_{2}| \leq 1}\int\limits_{(-1,1)^{2}}{|\partial_{1}^{h_{1}}\partial_{2}^{h_{2}}f|dx}}\\
& =   \sup\limits_{|h_{1}|, |h_{2}| \leq 1}\int\limits_{(-1,1)^{2}}{|f(x_{1} + h_{1}, x_{2} + h_{2}) - f(x_{1} +h_{1}, 	x_{2}) - f(x_{1}, x_{2} + h_{2}) +f(x_{1}, x_{2})|dx}\\
& \geq \frac{1}{4} \int\limits_{(-1,1)^{2}}\int\limits_{(-1,1)^{2}}{|f(x_{1} + h_{1}, x_{2} + h_{2}) -f(x_{1}, x_{2}+ h_{2})  - f(x_{1} + h_{1}, x_{2}) + f(x_{1}, x_{2})|dxdh}\\
& \geq  \frac{1}{4} \int\limits_{(-\frac{1}{2} , 																																			\frac{1}{2})^{2}}\int\limits_{(-\frac{1}{2},\frac{1}{2})^{2}}{|f(y_{1}, y_{2}) 	-f(x_{1}, y_{2})  - f(y_{1}, x_{2}) 	+ f(x_{1}, x_{2})|dxdy}\\	
& \stackrel{(\ast)}{\geq} \frac{1}{4} \int\limits_{(- \frac{1}{2}, \frac{1}{2})^{2}}{\left|\int\limits_{(-\frac{1}{2}, 									\frac{1}{2})^{2}}	f(x_{1}, x_{2}) - f(x_{1}, y_{2}) - f(y_{1}, x_{2}) + f(y_{1}, y_{2})dy \right|dx}\\
& = \frac{1}{4} \int\limits_{(- \frac{1}{2}, \frac{1}{2})^{2}}{\left| f(x_{1},x_{2}) - 	\int\limits_{-\frac{1}{2}}^{\frac{1}{2}}{f(x_{1},y_{2})dy_{2}} - \int\limits_{-\frac{1}{2}}^{\frac{1}{2}}{f(y_{1}, x_{2})dy_{1}} - 
	\int\limits_{\frac{1}{2}}^{\frac{1}{2}}\int\limits_{\frac{1}{2}}^{\frac{1}{2}}{f(y_{1}, y_{2} )dy_{1} dy_{2}}  \right| dx},
\end{align*}
where $(\ast)$ is a consequence of Jensen's inequality.\\
Thus, the statement holds with the functions
\begin{align*}
g_{(100)}(x_{1}) &= \int\limits_{-\frac{1}{2}}^{\frac{1}{2}}{f(x_{1},y_{2})dy_{2}} + 
		\frac{1}{2} \int\limits_{\frac{1}{2}}^{\frac{1}{2}}\int\limits_{\frac{1}{2}}^{\frac{1}{2}}{f(y_{1}, y_{2} )dy_{1} dy_{2}}, \\
g_{(010)}(x_{2}) &= \int\limits_{-\frac{1}{2}}^{\frac{1}{2}}{f(y_{1},x_{2})dy_{1}} + 
		\frac{1}{2} \int\limits_{\frac{1}{2}}^{\frac{1}{2}}\int\limits_{\frac{1}{2}}^{\frac{1}{2}}{f(y_{1}, y_{2} )dy_{1} dy_{2}}. \qedhere	
\end{align*} 
\end{proof}

\medskip

\begin{prop}
 \label{prop:2}
Let $\tilde{\chi}_{3}, \;E$ be as in Definition \ref{def:11}. Then there exist a universal radius $r>0$ and functions $f_{(100)}, \; f_{(010)}\in\{-1,1\}$ with the following properties:
\begin{eqnarray*}
\int\limits_{B_{r}}{|\tilde{\chi}_{3} - f_{(100)}|dx} \lesssim  E^{\frac{1}{2}} \;
\vee \; \int\limits_{B_{r}}{|\tilde{\chi}_{3} - f_{(010)}|dx} \lesssim  E^{\frac{1}{2}}.
\end{eqnarray*} 
\end{prop}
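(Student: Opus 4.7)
The plan is to combine the approximate wave decomposition provided by Lemmas~\ref{lem:interpol} and~\ref{lem:discrwave} with the two-valuedness of $\tilde{\chi}_3$, following the stress-free argument of \cite{R}. By Lemma~\ref{lem:interpol} and the definition $E = \eta^{1/3}E_{surf} + \eta^{-2/3}E_{elast}$, one has
\[
\sup_{|h_1|,|h_2|\leq r}\int_{B_r}|\partial_1^{h_1}\partial_2^{h_2}\tilde{\chi}_3|\,dx \;\lesssim\; E.
\]
After a trivial rescaling to the unit cube, Lemma~\ref{lem:discrwave} then delivers a universal radius $r>0$ together with one-variable functions $g_{(100)}(y_1)$ and $g_{(010)}(y_2)$ satisfying $\int_{B_r}|\tilde{\chi}_3 - g_{(100)} - g_{(010)}|\,dx \lesssim E$.

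The heart of the proof is to collapse this decomposition to a single-variable function using the constraint $\tilde{\chi}_3\in\{-1,1\}$. In the exact (stress-free) case the argument is purely arithmetic: if $g_{(100)}$ took two distinct essential values $a\neq b$, then $g_{(010)}(y_2)$ would have to lie in $\{-1-a,1-a\}\cap\{-1-b,1-b\}$, which is empty unless $|a-b|=2$ and is then a singleton; hence one of the summands is forced to be constant and the other $\{-1,1\}$-valued up to an additive shift. To transfer this to the perturbed setting, I would use Chebyshev on the defect $h := \tilde{\chi}_3 - g_{(100)} - g_{(010)}$ to select good reference slices $y_1^\ast$ and $y_2^\ast$, and define the two candidates
\[
f_{(100)}(y_1) := \tilde{\chi}_3(y_1,y_2^\ast), \qquad f_{(010)}(y_2) := \tilde{\chi}_3(y_1^\ast,y_2),
\]
which are automatically $\{-1,1\}$-valued. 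Splitting $B_r$ into the bad set $\{|h|\gtrsim E^{1/2}\}$ (of measure $\lesssim E^{1/2}$, contributing $O(1)$ per point) and its complement (where the discrete arithmetic identity of the stress-free case applies up to a small error) should then select which of the two candidates captures $\tilde{\chi}_3$ in $L^1$ at the rate $E^{1/2}$.

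The square-root scaling thus arises from the conversion of the $L^1$-smallness of $h$ (of order $E$) into the rounding of a continuous approximation onto a $\{-1,1\}$-valued target via Chebyshev. The main obstacle is precisely the last dichotomy step: one must rule out the intermediate \emph{product regime} in which $\tilde{\chi}_3(y_1,y_2)\approx \sigma(y_2)f_{(100)}(y_1)$ with both factors non-constant. Such a configuration is $\{-1,1\}$-valued and is captured by neither $f_{(100)}$ nor $f_{(010)}$ alone; it is excluded in the exact setting by the arithmetic identity above, but in the perturbed case it requires a quantitative measure-theoretic argument controlling the overlap of the level sets of $g_{(010)}$ around its value $g_{(010)}(y_2^\ast)$ and around the translate $g_{(010)}(y_2^\ast)\pm 2$.
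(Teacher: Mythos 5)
Your proposal follows the same four-step skeleton as the paper's proof (interpolation combined with the discrete wave lemma to obtain $\tilde{\chi}_{3}\approx g_{(100)}+g_{(010)}$, Chebyshev slicing, a dichotomy using two-valuedness, and a final rounding), but the decisive step -- the quantitative dichotomy -- is left as a stated gap, and you simultaneously raise an obstruction that is not actually there.

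On the spurious obstruction: you worry about a ``product regime'' $\tilde{\chi}_{3}(y_{1},y_{2})\approx \sigma(y_{2})f_{(100)}(y_{1})$ with both factors genuinely non-constant and $\{-1,1\}$-valued. Such a checkerboard cannot be written as $g_{(100)}(y_{1})+g_{(010)}(y_{2})$: comparing the values on $A\times B$, $A\times B^{c}$, $A^{c}\times B$, $A^{c}\times B^{c}$ forces $h(y_{2})-h(y_{2}^{\prime})$ to equal both $2$ and $-2$, a contradiction unless one factor is constant. So the additive decomposition supplied by Step 1 already excludes this configuration; no further argument is needed against it.

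On the genuine gap: the $E^{1/2}$ does not come from ``Chebyshev rounding.'' After slicing one should work not with the bare slice $f_{(100)}(y_{1}):=\tilde{\chi}_{3}(y_{1},y_{2}^{\ast})$ but with $\tilde{g}_{(100)}(y_{1}):=\tilde{\chi}_{3}(y_{1},y_{2}^{\ast})-g_{(010)}(y_{2}^{\ast})$, which is two-valued in $\{a_{1}-1,a_{1}+1\}$ (your bare slice shifts the additive constant and the resulting error $g_{(010)}(y_{2}^{\ast})$ is not tracked in your scheme). Setting $\lambda_{i}$ to the measure of the $a_{i}-1$ level set, the $L^{1}$-defect $\epsilon\lesssim E$ admits the lower bound
\[
\epsilon \;\geq\; \lambda_{1}\lambda_{2}\,\mathrm{dist}\bigl(a_{1}+a_{2}-2,\{-1,1\}\bigr) \;+\; (2r-\lambda_{1})(2r-\lambda_{2})\,\mathrm{dist}\bigl(a_{1}+a_{2}+2,\{-1,1\}\bigr),
\]
and an elementary case check shows at least one of the two distances is $\geq 1/2$. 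This forces $\lambda_{1}\lambda_{2}\lesssim E$ and hence $\min(\lambda_{1},\lambda_{2})\lesssim E^{1/2}$: the square root comes from controlling a product of two one-dimensional measures by $E$, not from rounding a continuous approximant. Without this rectangle estimate and the arithmetic dichotomy on $a_{1}+a_{2}\pm 2$, your argument does not close, and the final rounding onto $\{-1,1\}$ is then a short further step (projection onto the mean in the remaining variable, using that the deviation exceeds $1$ on the disagreement set).
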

 
\medskip

\begin{proof}[Proof of Proposition \ref{prop:2}]
We divide the proof into several steps:\\
\textbf{Step 1}: \itshape Application of Lemma \ref{lem:interpol} and Lemma \ref{lem:discrwave}.\\
\upshape As by assumption $\tilde{\chi}_{3}$ only depends on two variables, Lemma \ref{lem:discrwave} can be applied in combination with 
Lemma \ref{lem:interpol}. This yields the existence of a radius $r>0$ and of functions $g_{(100)}, \; g_{(010)}: B_{r}\subset \mathbb{R}^{2} \rightarrow \mathbb{R}$ such that
\begin{align*}
 \int\limits_{B_{r}}|\tilde{\chi}_{3} -g_{(100)} -g_{(010)}|dx \lesssim E.
\end{align*}
\textbf{Step 2}: \itshape There exist functions $\tilde{g}_{(100)}, \; \tilde{g}_{(010)}: B_{1}(0) \subset \mathbb{R}^{2} \rightarrow \mathbb{R}$ such that 
\begin{align}
\label{eq:2.1}	
 & \tilde{g}_{(100)} \in \left\{a_{1}-1, a_{1} + 1 \right\}, \;
 \tilde{g}_{(010)} \in \left\{a_{2}-1, a_{2} + 1 \right\}, \; a_{1},a_{2}\in \mathbb{R}, \\
\label{eq:2.2} 
 &\int\limits_{B_{r}}{| \tilde{g}_{(100)} -g_{(100)} |dx} \lesssim E, \;
 \int\limits_{B_{r}}{| \tilde{g}_{(010)} -g_{(010)} |dx} \lesssim E.
\end{align} 
\upshape It suffices to prove the statement for $\tilde{g}_{(100)}$. Since for any $L^{\infty}$-function, $f$, we find $x^{\ast}_{2}\in(-1,1)$ such that the evaluation of $f$
at $x_{2}^{\ast}$ is less than the mean value of $f$ in $x_{2}\in (-1,1)$, we conclude
\begin{align*}
& \exists  \; x_{2}^{\ast}\in (-r,r): \\
& \int\limits_{(-r,r)}{|\tilde{\chi}_{3}(x_{1}, x_{2}^{\ast}) - g_{(100)}(x_{1}) - g_{(010)}(x_{2}^{\ast})|dx_{1}}\\
& \leq \frac{1}{2r}\int\limits_{(-r,r)^{2}}{|\tilde{\chi}_{3}(x_{1},x_{2}) -g_{(100)}(x_{1}) -g_{(010)}(x_{2})|dx}.
\end{align*}
Setting
\begin{align*}
\tilde{g}_{(100)}(x_{1}) := \tilde{\chi}_{3}(x_{1}, x_{2}^{\ast}) - g_{(010)}(x_{2}^{\ast}),
\end{align*}
(\ref{eq:2.1}) and (\ref{eq:2.2}) follow immediately.\\ 
An application of the triangle inequality and of step 2 implies that it suffices to show\\
\textbf{Step 3:} \itshape For $\tilde{g}_{(100)}, \; \tilde{g}_{(010)}$ we find $a\in\mathbb{R}$ such that
\begin{align*}
& \min\left\{ \int\limits_{B_{r}}| \tilde{g} _{(100)} -a|dx, \int\limits_{B_{r}}{| \tilde{g} _{(010)} -a|dx}  \right\} \\
& \lesssim \left( \int\limits_{B_{r}}{|\tilde{\chi}_{3} - \tilde{g}_{(100)} - \tilde{g}_{(010)}|dx} \right)^{\frac{1}{2}}
\lesssim E^{\frac{1}{2}}.
\end{align*}
\upshape Let
\begin{align*}
\lambda_{1}&:= \mathcal{L}^{1}\left(\left\{ x_{1}\in (-r,r); \; \tilde{g}_{(100)}(x_{1}) = a_{1}-1 \right\}\right),\\
\lambda_{2}&:= \mathcal{L}^{1}\left(\left\{ x_{2}\in (-r,r); \; \tilde{g}_{(010)}(x_{2}) = a_{2}-1 \right\}\right),\\
\epsilon &:=\int\limits_{(-r,r)^{2}}{|\tilde{\chi}_{3} - \tilde{g}_{(100)} -\tilde{g}_{(010)} |dx}.
\end{align*}
We can estimate
\begin{align*}
\epsilon 
\geq & \; \lambda_{1}\lambda_{2}\mbox{dist}(a_{1}+a_{2}-2, \left\{ -1, 1 \right\})\\
& \; + (2r-\lambda_{1})(2r-\lambda_{2})\mbox{dist}(a_{1} + a_{2} +2, \left\{-1,1\right\}).
\end{align*}
Moreover, we must have that either $\mbox{dist}(a_{1}+a_{2}-2, \left\{ -1, 1 \right\}) \geq \frac{1}{2}$ or \\
$\mbox{dist}(a_{1} + a_{2} +2, \left\{-1,1\right\}) \geq \frac{1}{2} $. Otherwise, the inequality $\mbox{dist}(a_{1}+a_{2}-2, \left\{ -1, 1 \right\})< \frac{1}{2}$
would imply that either
\begin{align*}
a_{1} + a_{2} \in \left(\frac{1}{2}, \frac{3}{2} \right) \mbox{ or }
a_{1} + a_{2} \in \left(\frac{5}{2}, \frac{7}{2} \right).
\end{align*}
holds. In the first case, however, this would yield
\begin{equation*}
a_{1} + a_{2} +2 \in \left(\frac{5}{2}, \frac{7}{2} \right).
\end{equation*}
In the second case, this would result in
\begin{equation*}
a_{1} + a_{2} +2 \in \left(\frac{9}{2}, \frac{11}{2} \right).
\end{equation*}
Both statements contradict the assumption $\mbox{dist}(a_{1} + a_{2} +2, \left\{-1,1\right\}) < \frac{1}{2} $.
Therefore, we can w.l.o.g. assume $\mbox{dist}(a_{1}+a_{2}-2, \left\{-1,1\right\}) \geq \frac{1}{2}$. This results in:
\begin{align*}
& 2\epsilon \geq \lambda_{1} \lambda_{2}\\
& \Rightarrow \lambda_{1} \leq \sqrt{2}\sqrt{\epsilon} \; \vee \; \lambda_{2} \leq \sqrt{2}\sqrt{\epsilon}\\
& \Rightarrow \int\limits^{r}_{-r}{|\tilde{g}_{(100)} -(a_{1}+1)|dx_{1}} = 2\lambda_{1} \leq 2\sqrt{2 \epsilon} \lesssim E^{\frac{1}{2}}\\
& \vee \int\limits^{r}_{-r}{|\tilde{g}_{(010)} -(a_{2}+1)|dx_{2}} = 2\lambda_{2} \leq 2\sqrt{2 \epsilon} \lesssim E^{\frac{1}{2}}.
\end{align*}
An analogous argument works in case $\mbox{dist}(a_{1} + a_{2} +2, \left\{-1,1\right\}) \geq \frac{1}{2}$.\\
\textbf{Step 4}: \itshape Conclusion.\\
\upshape Without loss of generality, we may suppose that the second alternative of step 2 holds, i.e. there exists $g_{(100)}: (-r,r)\rightarrow \mathbb{R}$  such that
\begin{align*}
 \int\limits_{(-r,r)^{2}}{|\tilde{\chi}_{3}(x_{1},x_{2}) - g_{(100)}(x_{1})|dx_{1}dx_{2}} \lesssim E^{\frac{1}{2}}.
\end{align*}
Since $|g_{(100)}| \lesssim 1$ this leads to 
\begin{align*}
 \int\limits_{(-r,r)^{2}}{|\tilde{\chi}_{3}(x_{1},x_{2}) - g_{(100)}(x_{1})|^{2}dx_{1}dx_{2}} \lesssim E^{\frac{1}{2}}.
\end{align*}
As the $L^{2}$-projection on the space of constants is given by the mean value of the respective function, this implies
\begin{align*}
 \int\limits_{(-r,r)^{2}}{|\tilde{\chi}_{3}(x_{1},x_{2}) - \frac{1}{2r}\int\limits_{-r}^{r}\tilde{\chi}_{3}(x_{1},x_{2}^{\prime})dx_{2}^{\prime} |^{2}dx_{1}dx_{2}} \lesssim E^{\frac{1}{2}}.
\end{align*}
Defining
\begin{align*}
 \tilde{\chi}_{3}^{\ast}(x_{1}) :=
\left\{
\begin{array}{ll}
 1; & \frac{1}{2r}\int\limits_{-r}^{r}{\tilde{\chi}_{3}(x_{1},x_{2})dx_{2}} \geq 0,\\
 -1; & \mbox{else},
\end{array} \right.
\end{align*}
and remarking $|\tilde{\chi}_{3}(x_{1},x_{2}) - \frac{1}{2r}\int\limits_{-r}^{r}{\tilde{\chi}_{3}(x_{1},x_{2})dx_{2}} | \geq 1$ on $\left\{ \tilde{\chi}_{3} \neq \tilde{\chi}_{3}^{\ast} \right\}$, we note
\begin{align*}
\MoveEqLeft {E^{\frac{1}{2}} \gtrsim \int\limits_{\{ \tilde{\chi}_{3} \neq \tilde{\chi}_{3}^{\ast} \}\cap (-r,r)^{2}}|\tilde{\chi}_{3}(x_{1},x_{2}) - \frac{1}{2r}\int\limits_{-r}^{r}{\tilde{\chi}_{3}(x_{1},x_{2}^{\prime})dx_{2}^{\prime}}|^{2}dx_{1}dx_{2}}\\
& \geq \mathcal{L}^{2}(\{ \tilde{\chi}_{3} \neq \tilde{\chi}_{3}^{\ast} \}).
\end{align*}
Hence, we obtain
\begin{align*}
\MoveEqLeft{\int\limits_{(-r,r)^{2}}{|\tilde{\chi}_{3} - \tilde{\chi}_{3}^{\ast}|^{2}dx_{1}dx_{2}} 
= \int\limits_{\{ \tilde{\chi}_{3} \neq \tilde{\chi}_{3}^{\ast} \}\cap(-r,r)^{2}}{|\tilde{\chi}_{3}(x_{1},x_{2}) - \tilde{\chi}_{3}^{\ast}(x_{1})|^{2}dx_{1}dx_{2}}}\\
& \leq 4 \mathcal{L}^{2}(\left\{  \tilde{\chi}_{3} \neq \tilde{\chi}_{3}^{\ast} \right\} \cap (-r,r)^{2}) \lesssim E^{\frac{1}{2}}.
\end{align*}
Setting $f_{(100)}:= \tilde{\chi}_{3}^{\ast}$ and noticing that the $L^{1}$-estimate follows from the $L^{2}$-estimate as a consequence of the discreteness of $\tilde{\chi}_{3} - \tilde{\chi}_{3}^{\ast}$,
the claim follows.
\end{proof}

\medskip

\subsection{Proof of the Second Result}

In this section we consider the inner structure of the patterns. To avoid technical difficulties we work in a periodic setting. However, as the argument for the inner structure
is of local nature in the stress-free case, \cite{R}, we believe that this condition can be removed and replaced with a purely local reasoning.\\
Again, we follow the ideas of the stress-free setting (c.f. \cite{R}): Using the compatibility conditions, we begin with proving weak control (Lemma \ref{lem:H-2per}). Via Helmholtz decomposition, we obtain that
$\begin{pmatrix}
  \tilde{\chi}_{2} \\ \tilde{\chi}_{1}
 \end{pmatrix}$ is close to a gradient field (Lemma \ref{lem:grad}). Using the method of characteristics, we argue that this gradient field is $H^{-1}$-close to a function of a single variable (Proposition \ref{prop:3}). 
Last but not least, this can be translated into a statement on the volume fractions of the modified characteristic functions (Proposition \ref{prop:4}).

\medskip

We recall the setting:

\begin{defi}
\label{defi:FT}
Let $ \mathbb{T}^{d} := \mathbb{R}^{d}/  \mathbb{Z}^{d} $ and $f: \mathbb{T}^{d} \rightarrow \mathbb{R}, \; f\in L^{1}(\mathbb{T}^{d})$. Define
\begin{align*}
& \mathcal{F}f(k) := \int\limits_{\mathbb{T}^{d}}{f(x)e^{-2\pi i k\cdot x}dx},  \; k\in \mathbb{Z}^{d},\\
& \int\limits_{\mathbb{Z}^{d}}{h(k)dk} := \sum\limits_{k\in  \mathbb{Z}^{d}}{h(k)}.
\end{align*}
Let $f: \mathbb{T}^{d}\rightarrow \mathbb{R}$ be measurable. Set
\begin{align*}
& \left\| f \right\|_{H^{-s}(\mathbb{T}^{d})}^{2} := \int\limits_{\mathbb{Z}^{d}}{\frac{|\mathcal{F}f|^{2}}{|k|^{2s}}dk},\\
& f\in H^{-s}(\mathbb{T}^{d}) \Leftrightarrow \left\| f \right\|_{H^{-s}(\mathbb{T}^{d})}^{2}<\infty,\\
& \left\| f \right\|^{2}_{H^{-1}_{full}(\mathbb{T}^{d})}:= \int\limits_{\mathbb{Z}^{d}}{\frac{1}{1+|k|^{2}}|\mathcal{F}f|^{2}dk},\\
& f\in H^{-1}_{full}(\mathbb{T}^{d}) \Leftrightarrow \left\| f \right\|_{H^{-1}_{full}(\mathbb{T}^{d})}^{2}<\infty.
\end{align*}
Let $f: \mathbb{R}^{d} \rightarrow \mathbb{R}$ be measurable, $M \subset \mathbb{R}^{d}$ Borel, we define
\begin{align*}
 \langle f \rangle_{M} := \frac{1}{\mathcal{L}^{d}(M)} \int\limits_{M}{f(y)dy}.
\end{align*}
In the sequel we use the convention
\begin{align*}
& \chi_{1}, \chi_{2}, \chi_{3}, \chi_{4}: \mathbb{T}^{2}\rightarrow \mathbb{R},\\
& \tilde{\chi}_{1}, \tilde{\chi}_{2}, \tilde{\chi}_{3}: \mathbb{T}^{2} \rightarrow \mathbb{R},\\
& {E}_{elast} =  \int\limits_{[-\frac{1}{2},\frac{1}{2}]^{2}}{\left|e - 
\begin{pmatrix}
d_{1} & \tilde{\chi}_{3} & \tilde{\chi}_{2}\\
\tilde{\chi}_{3} & d_{2} &\tilde{\chi}_{1}\\
\tilde{\chi}_{2} & \tilde{\chi}_{1} & d_{3}\\
\end{pmatrix}
 \right|^{2}dy}, \\
& e= e(y_{1}, y_{2}), \; e: \mathbb{T}^{2} \rightarrow \Sym(3,\mathbb{R}).\\
\end{align*} 
\end{defi}

\medskip

\begin{lem}
\label{lem:H-2per}
For configurations in the $y_{1},y_{2}$-plane ($e = e(y_{1}, y_{2})$) we have
\begin{align*}
\partial_{1}(\partial_{1}\tilde{\chi}_{1} - \partial_{2}\tilde{\chi}_{2}) & = \partial_{1}\partial_{1}\phi_{11} + \partial_{1}\partial_{2}\phi_{12},\\
\partial_{2}(\partial_{1}\tilde{\chi}_{1} - \partial_{2}\tilde{\chi}_{2}) & = \partial_{1}\partial_{2}\rho_{12} + \partial_{2}\partial_{2}\rho_{22} 
\end{align*}
and
\begin{align*}
\int\limits_{[-\frac{1}{2}, \frac{1}{2}]^{2}}{\rho_{12}^{2} + \rho_{22}^{2} dx
} \lesssim E_{elast},\\
\int\limits_{[-\frac{1}{2},\frac{1}{2}]^{2}}{\phi_{11}^{2} + \phi_{12}^{2} dx
} \lesssim E_{elast}.
\end{align*}
In other words, for the $\left[ -\frac{1}{2}, \frac{1}{2} \right]^{2}$-periodic characteristic functions $\tilde{\chi}_{2}, \; \tilde{\chi}_{1}$, this is equivalent to $H^{-2}$-control: 
\begin{align*}
\left\|\nabla \left( \nabla \times \begin{pmatrix} \tilde{\chi}_{2} \\ \tilde{\chi}_{1} \end{pmatrix} \right) \right\|^{2}_{H^{-2}(\mathbb{T}^{2})} \lesssim  E_{elast}.
\end{align*}
\end{lem}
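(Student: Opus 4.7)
The plan is to deduce both identities from the Saint-Venant compatibility conditions for $e$, with the right-hand sides naturally given by entries of $e-M(\tilde{\chi})$ (the matrix appearing inside $E_{elast}$), and then to convert them into the announced $H^{-2}$-estimate by a short Fourier-series computation on $\mathbb{T}^{2}$.

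First I would specialise the general compatibility identity $\partial_{k}\partial_{l}e_{ij}+\partial_{i}\partial_{j}e_{kl}=\partial_{i}\partial_{l}e_{jk}+\partial_{j}\partial_{k}e_{il}$ to the two index choices $(i,j,k,l)=(2,3,1,1)$ and $(i,j,k,l)=(1,3,2,2)$. Since $e=e(y_{1},y_{2})$, every term carrying a factor $\partial_{3}$ drops out, leaving just $\partial_{1}^{2}e_{23}=\partial_{1}\partial_{2}e_{13}$ and $\partial_{2}^{2}e_{13}=\partial_{1}\partial_{2}e_{23}$, i.e.\ the two equations $\partial_{j}(\partial_{1}e_{23}-\partial_{2}e_{13})=0$ for $j=1,2$. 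Inserting the splittings $\tilde{\chi}_{1}=e_{23}+(\tilde{\chi}_{1}-e_{23})$ and $\tilde{\chi}_{2}=e_{13}+(\tilde{\chi}_{2}-e_{13})$ into $\partial_{j}(\partial_{1}\tilde{\chi}_{1}-\partial_{2}\tilde{\chi}_{2})$, the stress-free pieces are annihilated by these relations and what remains is precisely the pair of identities claimed in the lemma, with the choices
\begin{align*}
\phi_{11}=\rho_{12}:=\tilde{\chi}_{1}-e_{23},\qquad \phi_{12}=\rho_{22}:=e_{13}-\tilde{\chi}_{2}.
\end{align*}
All four of these functions are off-diagonal entries of $e-M(\tilde{\chi})$, so their $L^{2}$-norms are bounded by $E_{elast}^{1/2}$ up to a universal constant, which settles the first part.

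For the $H^{-2}$-bound I set $g:=\partial_{1}\tilde{\chi}_{1}-\partial_{2}\tilde{\chi}_{2}$ and pass to Fourier series on $\mathbb{T}^{2}$. Since $g$ is a sum of pure derivatives, $\hat{g}(0)=0$. For $k_{1}\neq 0$ the first identity gives, after cancelling the common factor $2\pi i k_{1}$, the relation $\hat{g}(k)=2\pi i\bigl(k_{1}\hat{\phi}_{11}(k)+k_{2}\hat{\phi}_{12}(k)\bigr)$; for $k_{1}=0,\,k_{2}\neq 0$ the second identity yields $\hat{g}(k)=2\pi i k_{2}\hat{\rho}_{22}(k)$. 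Using $|k_{1}|,|k_{2}|\leq|k|$, summing $|\hat{g}(k)|^{2}/|k|^{2}$ over $k\neq 0$ and applying Parseval, I obtain
\begin{align*}
\left\|\nabla g\right\|_{H^{-2}(\mathbb{T}^{2})}^{2}\;\sim\;\sum_{k\neq 0}\frac{|\hat{g}(k)|^{2}}{|k|^{2}}\;\lesssim\;\|\phi_{11}\|_{L^{2}}^{2}+\|\phi_{12}\|_{L^{2}}^{2}+\|\rho_{22}\|_{L^{2}}^{2}\;\lesssim\; E_{elast}.
\end{align*}

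I do not expect a serious obstacle here: the real content is the single observation that $\partial_{3}e=0$ collapses two Saint-Venant equations to $\partial_{j}(\partial_{1}e_{23}-\partial_{2}e_{13})=0$, and the rest is bookkeeping of signs plus one standard Fourier estimate. The only step requiring a little care is the Fourier argument, where the first identity must be used on $\{k_{1}\neq 0\}$ and the second on $\{k_{1}=0,\,k_{2}\neq 0\}$ so as to cover every non-zero mode of $g$.
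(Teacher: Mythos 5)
Your proof is correct and follows essentially the same route as the paper: both derive the two identities from the Saint-Venant compatibility conditions specialised to $e=e(y_1,y_2)$, identify the error terms $\tilde{\chi}_1-e_{23}$ and $e_{13}-\tilde{\chi}_2$ with entries of the elastic energy integrand, and then read off the $H^{-2}$ estimate in Fourier. You supply more detail on the last step (the case split $k_1\neq 0$ versus $k_1=0,\,k_2\neq 0$ and the mean-zero observation) than the paper, which simply asserts the equivalence; that extra care is appropriate and worth keeping.
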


\medskip

\begin{proof}[Proof of Lemma \ref{lem:H-2per}]
As in Lemma \ref{lem:OS-2} the statement is a result of the compatibility conditions for strains. We make use of the second block of the equations yielding 
\begin{align*}
0 &= \partial_{1}(-\partial_{1}e_{23} + \partial_{2}e_{13}),\\
0 &= \partial_{2}(\partial_{1}e_{23} - \partial_{2}e_{13}),
\end{align*}
for configurations in the plane spanned by $y_{1}, \; y_{2}$.
Thus, we obtain
\begin{align*}
\partial_{2}(\partial_{1}\tilde{\chi}_{1} - \partial_{2}\tilde{\chi}_{2}) 
= & \; \partial_{2}(\partial_{1}(\tilde{\chi}_{1} - e_{23}) - \partial_{2}(\tilde{\chi}_{2} - e_{13}) 
)\\
=: & \; \partial_{1}\partial_{2}\rho_{12} - \partial_{22}{\rho_{22}}. 
\end{align*}
Again noticing that the $\rho_{ij}$ correspond to the components of the elastic energy, we obtain $L^{2}$-control:
\begin{align*}
\int\limits_{[-\frac{1}{2},\frac{1}{2}]^{2}}{\rho_{12}^{2} + \rho_{22}^{2} dx} \lesssim E_{elast}.
\end{align*}
The second statement follows in the same way.
\end{proof}

\medskip

\begin{lem}
\label{lem:grad}
Let $w: \mathbb{T}^{2} \rightarrow \mathbb{R}^{2}$, $w\in L^{2}(\mathbb{T}^{2})$,  then we have
\begin{equation}
\label{eq:grad1}
\left\|Pw\right\|_{L^{2}(\mathbb{T}^{2})} = \left\|\nabla \times w\right\|_{H^{-1}(\mathbb{T}^{2})},
\end{equation}
where $P$ denotes the Leray-projection.
\end{lem}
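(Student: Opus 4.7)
The plan is to verify the identity mode by mode using the Fourier series on $\mathbb{T}^{2}$ introduced in Definition \ref{defi:FT}. The Leray projection $P$ is characterized by the fact that $Pw$ is (mean-zero and) divergence-free, and $w-Pw$ is a gradient. In Fourier space this translates to
\begin{equation*}
\widehat{Pw}(k) = \hat w(k) - \frac{k\,(k\cdot \hat w(k))}{|k|^{2}} \quad \text{for } k\neq 0,\qquad \widehat{Pw}(0)=0,
\end{equation*}
so that $k\cdot \widehat{Pw}(k)=0$. The first step is simply to record this formula and apply Plancherel to obtain
\begin{equation*}
\|Pw\|_{L^{2}(\mathbb{T}^{2})}^{2} \;=\; \sum_{k\neq 0}\Big(|\hat w(k)|^{2} - \tfrac{|k\cdot \hat w(k)|^{2}}{|k|^{2}}\Big).
\end{equation*}

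The second step is an application of the two-dimensional Lagrange identity
\begin{equation*}
|k|^{2}|\hat w(k)|^{2} \;=\; |k\cdot \hat w(k)|^{2} + |k_{1}\hat w_{2}(k)-k_{2}\hat w_{1}(k)|^{2},
\end{equation*}
which rewrites the previous display as
\begin{equation*}
\|Pw\|_{L^{2}(\mathbb{T}^{2})}^{2} \;=\; \sum_{k\neq 0}\frac{|k_{1}\hat w_{2}(k)-k_{2}\hat w_{1}(k)|^{2}}{|k|^{2}}.
\end{equation*}

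Finally, since $\nabla\times w = \partial_{1}w_{2}-\partial_{2}w_{1}$ in two dimensions, one computes $\mathcal{F}(\nabla\times w)(k) = 2\pi i\,(k_{1}\hat w_{2}(k)-k_{2}\hat w_{1}(k))$ (or, under the convention of the paper's $H^{-1}$-norm, without the $2\pi$-factor), so by the Definition \ref{defi:FT} of the $H^{-1}(\mathbb{T}^{2})$-norm,
\begin{equation*}
\|\nabla\times w\|_{H^{-1}(\mathbb{T}^{2})}^{2} \;=\; \sum_{k\neq 0}\frac{|k_{1}\hat w_{2}(k)-k_{2}\hat w_{1}(k)|^{2}}{|k|^{2}}.
\end{equation*}
Comparing the two expressions yields \eqref{eq:grad1}.

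The argument is essentially a one-line Fourier computation; the only real point to be careful about is the zero mode. The identity \eqref{eq:grad1} forces the Leray projection to be understood as the orthogonal projection onto the mean-zero divergence-free subspace of $L^{2}(\mathbb{T}^{2};\mathbb{R}^{2})$, which is consistent with the $H^{-1}(\mathbb{T}^{2})$-norm in Definition \ref{defi:FT} being insensitive to the (automatically vanishing) zero Fourier coefficient of $\nabla\times w$. Apart from this bookkeeping, no obstacles arise.
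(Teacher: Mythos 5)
Your proof is correct and takes essentially the same route as the paper's: both compute the Leray projection mode by mode in Fourier space and reduce $|\widehat{Pw}(k)|^{2}$ to $|k_{1}\hat w_{2}(k)-k_{2}\hat w_{1}(k)|^{2}/|k|^{2}$. The only differences are cosmetic---the paper invokes the triple-cross-product identity $k\times(k\times \mathcal{F}w)$ (tacitly embedding the $2$D field in $\mathbb{R}^{3}$) where you use the equivalent two-dimensional Lagrange identity, and you spell out the $k=0$ bookkeeping, which the paper leaves tacit.
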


\medskip

\begin{proof}[Proof of Lemma \ref{lem:grad}]
Working in Fourier space, the Leray-projection takes the following form:
\begin{equation*}
\mathcal{F}(Pw) = \mathcal{F}w - \frac{k \cdot \mathcal{F}w}{|k|^{2}}k = \frac{|k|^{2}\mathcal{F}w - k \cdot \mathcal{F}w}{|k|^{2}}.
\end{equation*}
With the identity
\begin{equation*}
 k\times (k \times \mathcal{F}w) = |k|^{2}\mathcal{F}w - k\cdot \mathcal{F}w,
\end{equation*}
we obtain
\begin{align*}
 |\mathcal{F}(Pw)|^{2} & = \frac{|k\times( k \times \mathcal{F}w)|^{2}}{|k|^{4}}\\
		      & = \frac{|k\times \mathcal{F}w|^{2}}{|k|^{2}},
\end{align*}
which proves the claim.
\end{proof}

\medskip

\begin{prop}
\label{prop:3}
Let $\tilde{\chi}_{1}, \tilde{\chi}_{2}, \tilde{\chi}_{3}, \; E_{elast}$ be as in Definition \ref{defi:FT}. 
Define $\Phi(s,t) := (s,t -F_{(100)}(s))$ where $F_{(100)}^{\prime}(s) = f_{(100)}(s)$ a.e., $F_{(100)}(0)= 0$ and where $f_{(100)}$ is the function from Proposition \ref{prop:2}.
Then we have:
\begin{enumerate}
\item 
There exists $u:[-\frac{1}{2},\frac{1}{2}]^{2}\rightarrow \mathbb{R}$, $u\in H^{1}([-\frac{1}{2},\frac{1}{2}]^{2})$, $\left[-\frac{1}{2}, \frac{1}{2}\right]^{2}$ - periodic, and there exists	
$g: \Phi^{-1}([-\frac{1}{2}, \frac{1}{2}]^{2})  \subset \mathbb{R}^{2} \rightarrow \mathbb{R}$, $(s,t) \mapsto g(t)$, one-periodic, such that
\begin{align*}
&\left\|
\begin{pmatrix}
\tilde{\chi}_{2}\\
\tilde{\chi}_{1}\\
\end{pmatrix}
- \nabla u
\right\|_{L^{2}([-\frac{1}{2}, \frac{1}{2}]^{2})}
\lesssim E^{\frac{1}{2}}_{elast},\\
& \left\|
u\circ\Phi - g
\right\|_{L^{2}(\Phi^{-1}([-\frac{1}{2}, \frac{1}{2}]^{2}))} 
\lesssim E^{\frac{1}{4}}.
\end{align*} 
 \item Let $\tilde{g}: \mathbb{R}^{2} \rightarrow \mathbb{R}$, $(y_{1},y_{2}) \mapsto \tilde{g}(y_{1},y_{2})$ be such that
 $\tilde{g}(y_{1},y_{2}) = (g\circ\Phi^{-1})(y_{1},y_{2})$ for $(y_{1},y_{2}) \in [-\frac{1}{2}, \frac{1}{2}]^{2}$ and let $\tilde{g}$ be $[-\frac{1}{2}, \frac{1}{2}]^{2}$-periodically continued.
Then we have
\begin{align*}
	 &\left\|
\begin{pmatrix}
\tilde{\chi}_{2}\\
\tilde{\chi}_{1}\\
\end{pmatrix}
-
\begin{pmatrix}
  f_{(100)}\partial_{2}\tilde{g}\\
 \partial_{2}\tilde{g}	
\end{pmatrix}
\right\|_{H^{-1}_{full}([-\frac{1}{2}, \frac{1}{2}]^{2})} \lesssim E^{\frac{1}{4}}.
\end{align*}
\end{enumerate} 
\end{prop}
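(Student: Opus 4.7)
The plan is to derive Part (1) via a Helmholtz decomposition combined with a transport argument along the characteristics of $\Phi$, and then to deduce Part (2) by differentiating the $L^2$-closeness obtained in Part (1).

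For the first estimate of Part (1), I start from Lemma \ref{lem:H-2per}, which gives $H^{-2}$-control on $\nabla\bigl(\nabla\times(\tilde{\chi}_2,\tilde{\chi}_1)\bigr)$. Interpreted in Fourier series on $\mathbb{T}^2$, dividing by $|k|^2$ shows that $\nabla\times(\tilde{\chi}_2,\tilde{\chi}_1)$ is $H^{-1}(\mathbb{T}^2)$-bounded by $E_{elast}^{1/2}$. Applying Lemma \ref{lem:grad} converts this into $\|P(\tilde{\chi}_2,\tilde{\chi}_1)\|_{L^2}\lesssim E_{elast}^{1/2}$, and the Helmholtz decomposition $(\tilde{\chi}_2,\tilde{\chi}_1)=\nabla u+P(\tilde{\chi}_2,\tilde{\chi}_1)$ produces the desired potential $u$.

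For the second estimate of Part (1), I exploit that the characteristics of the transport operator $\partial_1-f_{(100)}\partial_2$ are precisely straightened by $\Phi$. A direct chain-rule computation yields
\begin{equation*}
\partial_s(u\circ\Phi)(s,t)=\bigl(\partial_1 u-f_{(100)}\partial_2 u\bigr)\circ\Phi(s,t).
\end{equation*}
Replacing $\nabla u$ by $(\tilde{\chi}_2,\tilde{\chi}_1)$ (at an $L^2$-cost of $E_{elast}^{1/2}$) and invoking the pointwise algebraic identity $\tilde{\chi}_1\tilde{\chi}_2\tilde{\chi}_3=1$, which is immediate from inspection of the four wells in Definition \ref{def:11}, I can rewrite
\begin{equation*}
\tilde{\chi}_2-f_{(100)}\tilde{\chi}_1=\tilde{\chi}_1(\tilde{\chi}_3-f_{(100)}).
\end{equation*}
Proposition \ref{prop:2} gives $\|\tilde{\chi}_3-f_{(100)}\|_{L^1}\lesssim E^{1/2}$, and since $\tilde{\chi}_3-f_{(100)}\in\{-2,0,2\}$ this upgrades to $\|\tilde{\chi}_3-f_{(100)}\|_{L^2}\lesssim E^{1/4}$. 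Combined with $|\tilde{\chi}_1|=1$ and the unit-Jacobian of $\Phi$, I obtain $\|\partial_s(u\circ\Phi)\|_{L^2(\Phi^{-1}([-1/2,1/2]^2))}\lesssim E^{1/4}$. A one-dimensional Poincar\'e inequality in $s$ on the (bounded) $s$-sections of $\Phi^{-1}([-1/2,1/2]^2)$, together with defining $g(t)$ as the $s$-average of $u\circ\Phi$, then yields the claimed $L^2$-bound.

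For Part (2), I use that $\Phi$ has Jacobian one to rewrite $\|u\circ\Phi-g\|_{L^2(\Phi^{-1}([-1/2,1/2]^2))}\lesssim E^{1/4}$ as $\|u-\tilde{g}\|_{L^2([-1/2,1/2]^2)}\lesssim E^{1/4}$. The bounded embedding $\nabla\colon L^2(\mathbb{T}^2)\to H^{-1}_{full}(\mathbb{T}^2)$ then gives $\|\nabla u-\nabla\tilde{g}\|_{H^{-1}_{full}}\lesssim E^{1/4}$, where the identity $\nabla\tilde{g}=(f_{(100)}\partial_2\tilde{g},\partial_2\tilde{g})$ follows from a chain-rule computation on $[-1/2,1/2]^2$. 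Combining this with the first estimate of Part (1) (which dominates $E^{1/4}$ since $E_{elast}^{1/2}\leq E^{1/2}\leq E^{1/4}$) and the triangle inequality completes the argument.

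The main technical obstacles I anticipate are (i) the $L^1\to L^2$ upgrade of the Proposition \ref{prop:2} bound, which fixes the $E^{1/4}$ rather than $E^{1/2}$ exponent, and (ii) handling the periodic extension of $\tilde{g}$ across $y_1=\pm 1/2$, where $\tilde{g}$ typically jumps: one must verify that the distributional identity $\nabla\tilde{g}=(f_{(100)}\partial_2\tilde{g},\partial_2\tilde{g})$ is compatible with the $H^{-1}_{full}$-norm, or that the singular boundary contribution is absorbable into the universal constant. Everything else reduces to careful but routine bookkeeping of the exponents of $E$.
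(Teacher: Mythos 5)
Your Part (1) tracks the paper's proof step by step: Helmholtz decomposition via Lemma \ref{lem:H-2per} and Lemma \ref{lem:grad}, straightening the characteristics by $\Phi$, using $\tilde{\chi}_2=\tilde{\chi}_1\tilde{\chi}_3$ together with the $L^1\to L^2$ upgrade of Proposition \ref{prop:2}, and then Poincar\'e. One technical point: the $s$-sections of $\Phi^{-1}\bigl([-\tfrac12,\tfrac12]^2\bigr)$ over which you want to apply a one-dimensional Poincar\'e inequality need not be intervals (since $F_{(100)}$ is a zig-zag, a horizontal slice can meet the slanted strip in several components), so taking the $s$-average directly over the slanted domain does not give a uniform Poincar\'e constant. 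The paper sidesteps this by sandwiching $\Phi^{-1}\bigl([-\tfrac12,\tfrac12]^2\bigr)\subset[-\tfrac r2,\tfrac r2]^2\subset\Phi^{-1}\bigl([-\tfrac k2,\tfrac k2]^2\bigr)$ and exploiting the $[-\tfrac12,\tfrac12]^2$-periodicity of $u$ to control $\partial_s(u\circ\Phi)$ on the larger set; Poincar\'e is then applied on the rectangle, where the sections are intervals.

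Part (2) contains a genuine gap, and it is precisely the issue you flagged under (ii) --- but it is not absorbable. After the $[-\tfrac12,\tfrac12]^2$-periodic continuation, $\tilde g$ generically jumps across $\{y_1=\pm\tfrac12\}$: the trace from the left is $g\bigl(y_2+F_{(100)}(\tfrac12)\bigr)$, from the right $g\bigl(y_2+F_{(100)}(-\tfrac12)\bigr)$, and these differ unless $\int_{-1/2}^{1/2}f_{(100)}\,ds\in\mathbb{Z}$. Consequently the distributional $\partial_1\tilde g$ on $\mathbb{T}^2$ contains a singular line measure, and the identity $\nabla\tilde g=(f_{(100)}\partial_2\tilde g,\,\partial_2\tilde g)$ is simply false as distributions on the torus. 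That singular term is of order one in $H^{-1}_{full}(\mathbb{T}^2)$, not of order $E^{1/4}$, so it cannot be swallowed by the implicit constant; salvaging your route would require a separate argument (using periodicity of $u$ and the relation $(u\circ\Phi)(s+1,t)=(u\circ\Phi)(s,t-\alpha)$ with $\alpha=\int f_{(100)}$) to prove the jump of $g$ is itself $O(E^{1/4})$, which is not routine bookkeeping. The paper avoids $\partial_1\tilde g$ entirely: it splits
\[
\begin{pmatrix}\tilde\chi_2\\ \tilde\chi_1\end{pmatrix}-\begin{pmatrix}f_{(100)}\partial_2\tilde g\\ \partial_2\tilde g\end{pmatrix}
=\Bigl[\begin{pmatrix}\tilde\chi_2\\ \tilde\chi_1\end{pmatrix}-\nabla u\Bigr]
+\begin{pmatrix}\partial_1 u-f_{(100)}\partial_2 u\\ 0\end{pmatrix}
+\Bigl[\begin{pmatrix}f_{(100)}\partial_2 u\\ \partial_2 u\end{pmatrix}-\begin{pmatrix}f_{(100)}\partial_2\tilde g\\ \partial_2\tilde g\end{pmatrix}\Bigr],
\]
and for the last bracket observes that, since $f_{(100)}$ depends only on $y_1$, one may pull $\partial_2$ out and bound the $H^{-1}_{full}$-norm by $\|(f_{(100)}(\tilde g-u),\,\tilde g-u)\|_{L^2}\lesssim\|\tilde g-u\|_{L^2}\lesssim E^{1/4}$. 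Only $\partial_2\tilde g$ ever appears, which is a genuine weak derivative on $\mathbb{T}^2$ because $g$ is one-periodic in $t$, hence $\tilde g$ is smooth (indeed, just a shift of $g$) in the $y_2$-direction. You should adopt this decomposition to close the argument.
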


\begin{rem}
\label{rem:7}
 \begin{enumerate}
  \item Let $f: \mathbb{R}^{2} \rightarrow \mathbb{R}$, $(s,t) \mapsto f(s,t)$.\\
    As $\det(D\Phi)(s,t) = 1 $, we have
    \begin{align*}
      \left\| f \right\|_{L^{2}(\Phi^{-1}([-\frac{1}{2}, \frac{1}{2}]^{2}))} = \left\| f\circ \Phi^{-1} \right\|_{L^{2}([-\frac{1}{2}, \frac{1}{2}]^{2})}.
    \end{align*}
  \item In the periodic setting the statement of Proposition \ref{prop:3} amounts to 
    \begin{align*}
    \left\| \tilde{\chi}_{3} - f_{(100)} \right\|_{L^{2}([-\frac{1}{2}, \frac{1}{2}]^{2})} \lesssim E^{\frac{1}{4}}.
    \end{align*}
\end{enumerate}
\end{rem}

\medskip
Mimicking the proof of the stress-free case, we proceed in several steps: Via the compatibility conditions we prove closeness of $\begin{pmatrix}
                                                                                                                                \tilde{\chi}_{2} \\ \tilde{\chi}_{1}
                                                                                                                               \end{pmatrix}$ 
to a gradient field $\nabla u$ for which we 
determine the characteristic equations. An application of Poincar\'e's inequality and a change of coordinates yield closeness of $u$ to a function of a single variable.
Since $u$ resembles, loosely speaking, the inverse gradient of  $\begin{pmatrix}
                                                   \tilde{\chi}_{2}\\ \tilde{\chi}_{1}
                                                  \end{pmatrix}$,
this motivates the closeness of this vector field to the crossing twin structures with respect to the $H^{-1}$-norm. Without additionally making use of the surface energy this is optimal (c.f. counterexample given in Lemma \ref{lem:counter}).

\medskip

\begin{proof}
\textbf{Step 1:} \itshape $H^{-1}$-control.\\
\upshape
Lemma \ref{lem:H-2per} yields
\begin{align*}
\left\| \nabla \left(\nabla \times
\begin{pmatrix}
\tilde{\chi}_{2} \\
\tilde{\chi}_{1}
\end{pmatrix}
 \right) \right\|_{H^{-2}([-\frac{1}{2}, \frac{1}{2}]^{2})}\lesssim E_{elast}^{\frac{1}{2}}.
\end{align*} 
Using the periodicity assumptions this immediately translates into  
\begin{align*}
\left\| \nabla \times
\begin{pmatrix}
\tilde{\chi}_{2} \\
\tilde{\chi}_{1}
\end{pmatrix}
\right\|_{H^{-1}([-\frac{1}{2}, \frac{1}{2}]^{2})}
=   
\left\| \nabla \left(\nabla \times
\begin{pmatrix}
\tilde{\chi}_{2} \\
\tilde{\chi}_{1}
\end{pmatrix}
 \right) \right\|_{H^{-2}([-\frac{1}{2}, \frac{1}{2}]^{2})}
 \lesssim E_{elast}^{\frac{1}{2}},
\end{align*}
which is the estimate we looked for.

\textbf{Step 2: }\itshape $L^{2}$-control.\\
\upshape
Denoting the Leray projection with $P$ and referring to Lemma \ref{lem:grad}, we obtain
\begin{align*}
\left\|
P\left(
\begin{pmatrix}
\tilde{\chi}_{2}\\
\tilde{\chi}_{1}
\end{pmatrix}\right)
 \right\|_{L^{2}([-\frac{1}{2}, \frac{1}{2}]^{2})}
\stackrel{(\ref{eq:grad1})}{=} \left\|\nabla \times
\begin{pmatrix}
\tilde{\chi}_{2}\\
\tilde{\chi}_{1}
\end{pmatrix}
 \right\|_{H^{-1}([-\frac{1}{2}, \frac{1}{2}]^{2})} \lesssim E^{\frac{1}{2}}_{elast}. 
\end{align*}
With the Helmholtz-projection on gradient fields, $Q$, this turns into
\begin{align*}
\left\|
\begin{pmatrix}
\tilde{\chi}_{2}\\
\tilde{\chi}_{1}
\end{pmatrix}
- Q
\begin{pmatrix}
\tilde{\chi}_{2}\\
\tilde{\chi}_{1}
\end{pmatrix}
 \right\|_{L^{2}([-\frac{1}{2}, \frac{1}{2}]^{2})} \lesssim E_{elast}^{\frac{1}{2}}
\end{align*}
and 
\begin{align*}
&\begin{pmatrix}
\partial_{1}u\\
\partial_{2}u
\end{pmatrix}
:=
Q
\begin{pmatrix}
\tilde{\chi}_{2}\\
\tilde{\chi}_{1}
\end{pmatrix}  \in L^{2}\left(\left[-\frac{1}{2}, \frac{1}{2}\right]^{2}\right), \; u \in H^{1}\left(\left[-\frac{1}{2}, \frac{1}{2}\right]^{2}\right),\\
&u \; \left[-\frac{1}{2},\frac{1}{2}\right]^{2} \mbox{- periodic}.
\end{align*}

\textbf{Step 3: }\itshape Characteristics for $u$: We have
\begin{align}
 \label{eq:P3.1}
 \left\| \frac{d}{ds}(u\circ\Phi) \right\|_{L^{2}(\Phi^{-1}([-\frac{1}{2},\frac{1}{2}]^{2}))} \lesssim E^{\frac{1}{4}}.
\end{align}

\upshape As in the stress-free setting we exploit the structure of the strains describing the phase transition.
All in all, we have the following identities and estimates at our disposal:
\begin{align}
\label{eq:p2.1}
& \tilde{\chi}_{2} - \tilde{\chi}_{3}\tilde{\chi}_{1} = 0,\\
\label{eq:p2.2}
& \left\| \tilde{\chi}_{3} - f_{(100)} \right\|_{L^{2}([-\frac{1}{2}, \frac{1}{2}]^{2})} \lesssim E^{\frac{1}{4}},\\
\label{eq:p2.3}
& \left\|
	\begin{pmatrix}
	\tilde{\chi}_{2}\\
	\tilde{\chi}_{1}
	\end{pmatrix}
	-
	Q
	\begin{pmatrix}
	\tilde{\chi}_{2}\\
	\tilde{\chi}_{1}
	\end{pmatrix}
 \right\|_{L^{2}([-\frac{1}{2}, \frac{1}{2}]^{2})} \lesssim E_{elast}^{\frac{1}{2}},\\
\label{eq:p2.4}
& |f_{(100)}| = 1, \,\; \eta\leq 1.
\end{align}
Since $\Phi(s,t)$ is a bilipschitz mapping the chain rule may be applied almost everywhere:
\begin{align*} 
\MoveEqLeft{\left\|\frac{d}{ds}(u \circ\Phi) \right\|_{L^{2}(\Phi^{-1}([-\frac{1}{2},\frac{1}{2}]^{2}))}
= \left\|\partial_{1}u\circ\Phi - (f_{(100)}\circ\Phi)(\partial_{2}u\circ\Phi)\right\|_{L^{2}(\Phi^{-1}([-\frac{1}{2},\frac{1}{2}]^{2}))}}\\
& \stackrel{(\ref{eq:p2.1})}{=} \left\|\partial_{1}u - f_{(100)}\partial_{2}u  + f_{(100)}\tilde{\chi}_{1} -   f_{(100)}\tilde{\chi}_{1} - \tilde{\chi}_{2} + \tilde{\chi}_{3}\tilde{\chi}_{1} \right\|_{L^{2}([-\frac{1}{2},\frac{1}{2}]^{2})}\\
& \leq  \underbrace{\left\| \partial_{1}u - \tilde{\chi}_{2} \right\|_{L^{2}([-\frac{1}{2},\frac{1}{2}]^{2})}}_{\stackrel{(\ref{eq:p2.3})}{\lesssim} E_{elast}^{\frac{1}{2}}}
 + \underbrace{\left\|f_{(100)}(\partial_{2}u - \tilde{\chi}_{1}) \right\|_{L^{2}([-\frac{1}{2},\frac{1}{2}]^{2})}}_{\stackrel{(\ref{eq:p2.3}),(\ref{eq:p2.4})}{\lesssim} \left\|\partial_{2}u - \tilde{\chi}_{1} \right\|_{L^{2}([-\frac{1}{2},\frac{1}{2}]^{2})}}\\
& + \underbrace{\left\|\tilde{\chi}_{1}(f_{(100)} - \tilde{\chi}_{3}) \right\|_{L^{2}([-\frac{1}{2},\frac{1}{2}]^{2})}}_
	{\leq \left\|\tilde{\chi}_{1} \right\|_{L^{\infty}}\left\|f_{(100)} - \tilde{\chi}_{3} \right\|_{L^{2}([-\frac{1}{2},\frac{1}{2}]^{2})} }
 \stackrel{(\ref{eq:p2.3}),(\ref{eq:p2.2})}{\lesssim}  E^{\frac{1}{4}}.	
\end{align*}
\textbf{Step 4: }\itshape Application of Poincar\'e's inequality.\\
\upshape As $\Phi$ is bilipschitz, there exist $k\in\mathbb{N}$ and a cube $\left[-\frac{r}{2},\frac{r}{2} \right]^{2}$, such that the inclusions $\Phi^{-1}([-\frac{1}{2}, \frac{1}{2}]^{2}) \subset \left[-\frac{r}{2},\frac{r}{2} \right]^{2} \subset \Phi^{-1}([-\frac{k}{2}, \frac{k}{2}]^{2})$ hold.
Due to the $\left[-\frac{1}{2}, \frac{1}{2}\right]^{2}$ - periodicity of $u$ we have
\begin{align*}
 \left\| \frac{d}{ds}\left(u\circ\Phi\right)  \right\|_{L^{2}(\Phi^{-1}([-\frac{k}{2}, \frac{k}{2}]^{2}))} \leq Ck^{2}\left\| \frac{d}{ds}\left(u\circ\Phi\right)  \right\|_{L^{2}(\Phi^{-1}([-\frac{1}{2},\frac{1}{2}]^{2}))} \lesssim E^{\frac{1}{4}}.
\end{align*}
Applying the lemma of Poincar\'e we find $g: \left[-\frac{r}{2},\frac{r}{2} \right]^{2} \subset \mathbb{R}^{2} \rightarrow \mathbb{R}$, $(s,t) \mapsto g(t)$, one-periodic, such that
\begin{align*}
\left\|u\circ\Phi - g\right\|_{L^{2}\left(\left[-\frac{r}{2},\frac{r}{2} \right]^{2}\right)} \lesssim \left\|\frac{d}{ds}\left(u\circ\Phi\right) \right\|_{L^{2}\left(\left[-\frac{r}{2},\frac{r}{2} \right]^{2}\right)}.
\end{align*}
Therefore we conclude
\begin{align*}
\MoveEqLeft{ \left\| u\circ\Phi - g \right\|_{L^{2}(\Phi^{-1}\left([-\frac{1}{2}, \frac{1}{2}]^{2}\right))}
 \leq  \left\| u\circ\Phi - g \right\|_{L^{2}\left(\left[-\frac{r}{2},\frac{r}{2} \right]^{2}\right)}}\\
& \lesssim \left\| \frac{d}{ds}\left(u\circ\Phi\right) \right\|_{L^{2}\left(\left[-\frac{r}{2},\frac{r}{2} \right]^{2}\right)} 
\leq \left\| \frac{d}{ds}\left(u\circ\Phi\right) \right\|_{L^{2}(\Phi^{-1}([-\frac{k}{2}, \frac{k}{2}]^{2}))}\\
& \lesssim E^{\frac{1}{4}}.
\end{align*}

\textbf{Step 5: }\itshape Proof of part 2.\\
\upshape We estimate:
\begin{align*}
\MoveEqLeft {\left\|
 \begin{pmatrix}
  \tilde{\chi}_{2}\\
  \tilde{\chi}_{1} 			
 \end{pmatrix}
 - \begin{pmatrix}
    f_{(100)}\partial_{2}\tilde{g}\\
	\partial_{2}\tilde{g}
   \end{pmatrix}
 \right\|_{H^{-1}_{full}(\mathbb{T}^{2})}
\leq 
 \left\| \begin{pmatrix}
             \tilde{\chi}_{2}\\
	     \tilde{\chi}_{1}
            \end{pmatrix}
 -\nabla u
 \right\|_{H^{-1}_{full}(\mathbb{T}^{2})}}\\
& + 
\left\|  \begin{pmatrix}
 f_{(100)}\partial_{2}\tilde{g}\\
  \partial_{2}\tilde{g}			
 \end{pmatrix} - 
\begin{pmatrix}
  f_{(100)}\partial_{2}u\\
\partial_{2}u
\end{pmatrix}\right\|_{H^{-1}_{full}(\mathbb{T}^{2})}\\
& + \left\| \partial_{1}u -f_{(100)}\partial_{2}u
 \right\|_{H^{-1}_{full}(\mathbb{T}^{2})}.
\end{align*}
Using the continuous embedding $L^{2} \hookrightarrow H^{-1}_{full}$ and recalling (\ref{eq:p2.3}), 
we can deal with the first term:
\begin{align*}
  \left\| \begin{pmatrix}
             \tilde{\chi}_{2}\\
	     \tilde{\chi}_{1}
            \end{pmatrix}
 -\nabla u
 \right\|_{H^{-1}_{full}(\mathbb{T}^{2})}
\lesssim \left\|  \begin{pmatrix}
             \tilde{\chi}_{2}\\
	     \tilde{\chi}_{1}
            \end{pmatrix}
 -\nabla u\right\|_{L^{2}(\mathbb{T}^{2})} \lesssim E_{elast}^{\frac{1}{2}}.
\end{align*}
For the third term we recall that by definition of $\Phi$ the identity
\begin{align*}
 (\partial_{1}u - f_{(100)}\partial_{2}u)\circ\Phi = \frac{d}{ds}(u\circ\Phi)
\end{align*}
holds. Taking into account (\ref{eq:P3.1}) and Remark \ref{rem:7}, we obtain:
\begin{align*}
\MoveEqLeft{ \left\| \partial_{1}u -f_{(100)}\partial_{2}u
 \right\|_{H^{-1}_{full}(\mathbb{T}^{2})} \lesssim 
\left\| \partial_{1}u -f_{(100)}\partial_{2}u
 \right\|_{L^{2}(\mathbb{T}^{2})}}\\
& = \left\| \frac{d}{ds}(u\circ\Phi) \right\|_{L^{2}(\Phi^{-1}(\mathbb{T}^{2}))} 
\stackrel{(\ref{eq:P3.1})}{\lesssim} E^{\frac{1}{4}}.
\end{align*}

In order to bound the second term we use that $f_{(100)}$ only depends on $y_{1}$. 
Further we remark that $\partial_{2}\tilde{g}$ exists in the Sobolev sense as $g$ is one-periodic and $\tilde{g}$ is obtained from $g$ via periodizing in $y_{1}$-direction.
Consequently:
\begin{align*}
 \left\|  \begin{pmatrix}
 f_{(100)}\partial_{2}\tilde{g}\\
  \partial_{2}\tilde{g}			
 \end{pmatrix} - 
\begin{pmatrix}
  f_{(100)}\partial_{2}u\\
\partial_{2}u
\end{pmatrix}\right\|_{H^{-1}_{full}(\mathbb{T}^{2})} &= 
 \left\|  \partial_{2} \left( \begin{pmatrix}
 f_{(100)}\tilde{g}\\
  \tilde{g}			
 \end{pmatrix} - 
\begin{pmatrix}
  f_{(100)}u\\
u
\end{pmatrix} \right) \right\|_{H^{-1}_{full}(\mathbb{T}^{2})}\\
&\leq  \left\|  \begin{pmatrix}
 f_{(100)}\tilde{g}\\
  \tilde{g}			
 \end{pmatrix} - 
\begin{pmatrix}
  f_{(100)}u\\
u
\end{pmatrix} \right\|_{L^{2}(\mathbb{T}^{2})}.
\end{align*}
Due to (\ref{eq:p2.4}), we deduce 
\begin{align*}
\MoveEqLeft{ \left\|  \begin{pmatrix}
 f_{(100)}\tilde{g}\\
  \tilde{g}			
 \end{pmatrix} - 
\begin{pmatrix}
  f_{(100)}u\\
u
\end{pmatrix} \right\|_{L^{2}(\mathbb{T}^{2})} \leq 2 \left\| \tilde{g} - u\right\|_{L^{2}(\mathbb{T}^{2})}} \\
& \lesssim \left\| g\circ\Phi^{-1} - u \right\|_{L^{2}(\mathbb{T}^{2})}\\
& = \left\| g - u\circ\Phi \right\|_{L^{2}(\Phi^{-1}([-\frac{1}{2},\frac{1}{2}]^{2}))}
 \lesssim E^{\frac{1}{4}}.
\end{align*}
Thus, we combine these estimates to conclude
\begin{align*}
 \left\|
 \begin{pmatrix}
  \tilde{\chi}_{2}\\
  \tilde{\chi}_{1} 			
 \end{pmatrix}
 - \begin{pmatrix}
    f_{(100)}\partial_{2}\tilde{g}\\
	\partial_{2}\tilde{g}
   \end{pmatrix}
 \right\|_{H^{-1}_{full}(\mathbb{T}^{2})} \lesssim E^{\frac{1}{4}}.
\end{align*}

\end{proof}

\medskip

In order derive a statement on the volume fractions, we have to exploit the properties of the approximating functions associated to the modified characteristic functions. Under the change
of coordinates, $\Phi$, the functions $f_{(100)}$ and $\tilde{g}$ approximating $\tilde{\chi}_{3}$ and $\tilde{\chi}_{1}$ respectively are, roughly speaking, independent. The good properties
of the change of coordinates preserve this.

\medskip

\begin{prop}
\label{prop:4}
Let $\eta\leq 1$ and assume $\tilde{\chi}_{i}$, $E$, $E_{elast}$ are as in Definition \ref{defi:FT}, suppose that $\left\| \tilde{\chi}_{3} - f_{(100)} \right\|_{L^{2}(\mathbb{T}^{2})} \lesssim E^{\frac{1}{4}}$. Let $\Phi(s,t):= (s,t-F_{(100)}(s))$, $F^{\prime}_{(100)}(s)= f_{(100)}(s)$ a.e., $F_{(100)}(0)=0$. 
Then it holds
\begin{align*}
 \left| \theta_{1}(\theta_{2} + \theta_{4}) - \theta_{4}(\theta_{1} + \theta_{3}) \right| \lesssim E^{\frac{1}{4}}.
\end{align*}
\end{prop}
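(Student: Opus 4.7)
The plan is to reduce the asserted inequality, via a short algebraic identity, to two mean-value estimates $|\langle\tilde\chi_1\rangle|,|\langle\tilde\chi_2\rangle|\lesssim E^{1/4}$, and then extract these from the $L^2$-approximation $(\tilde\chi_2,\tilde\chi_1)^T\approx\nabla u$ that was already produced inside the proof of Proposition~\ref{prop:3}, using only that a $\mathbb{T}^2$-periodic gradient has vanishing mean.

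Setting $m_i:=\langle\tilde\chi_i\rangle$ and unfolding the $\theta_i$ via the formulas $\chi_i=\tfrac14(1\pm\tilde\chi_1\pm\tilde\chi_2\pm\tilde\chi_3)$ of Definition~\ref{def:11}, the two $\theta_1\theta_4$ contributions cancel and a short expansion (grouping as $(1\pm m_2)^2-(m_1\pm m_3)^2$) yields
\[
 \theta_1(\theta_2+\theta_4)-\theta_4(\theta_1+\theta_3) \;=\; \theta_1\theta_2-\theta_3\theta_4 \;=\; \tfrac14\bigl(m_2-m_1m_3\bigr).
\]
Since $|m_3|\leq 1$, it therefore suffices to prove $|m_1|,|m_2|\lesssim E^{1/4}$, for then $|m_2-m_1m_3|\leq |m_2|+|m_1|\lesssim E^{1/4}$.

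To obtain these mean bounds I would invoke Step~2 of the proof of Proposition~\ref{prop:3}: via Lemma~\ref{lem:grad} and the Helmholtz projection $Q$, it produces a $u\in H^{1}(\mathbb{T}^2)$, one-periodic in each coordinate, with
\[
 \bigl\|(\tilde\chi_2,\tilde\chi_1)^T-\nabla u\bigr\|_{L^2(\mathbb{T}^2)} \;\lesssim\; E_{elast}^{1/2}.
\]
Because $u$ is $\mathbb{Z}^2$-periodic, the fundamental theorem of calculus and Fubini give $\langle\partial_1 u\rangle=\langle\partial_2 u\rangle=0$; hence, for $j\in\{1,2\}$ and the corresponding coordinate $\sigma\in\{1,2\}$, Cauchy--Schwarz yields
\[
 |m_j| \;=\; \bigl|\langle \tilde\chi_j - \partial_\sigma u\rangle\bigr| \;\leq\; \bigl\|\tilde\chi_j - \partial_\sigma u\bigr\|_{L^2(\mathbb{T}^2)} \;\lesssim\; E_{elast}^{1/2} \;\leq\; E^{1/2} \;\leq\; E^{1/4},
\]
where the last two inequalities use $E_{elast}\leq\eta^{2/3}E\leq E$ (since $\eta\leq 1$) and the standing assumption $E\leq 1$ of Definition~\ref{def:11}.

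I do not expect any substantial obstacle here. The analytic heavy lifting — compatibility, Helmholtz decomposition, characteristics, Poincar\'e — has already been carried out in Proposition~\ref{prop:3}. The only conceptual point worth flagging is that the periodic setting is essential: the $\mathbb{Z}^2$-periodicity of $u$ is what forces the means of its partial derivatives, and hence of $\tilde\chi_1$ and $\tilde\chi_2$, to be small. This ``mean-zero'' input is precisely what collapses the stress-free identity $m_2 = m_1 m_3$ to $m_2\approx 0$, and thereby produces the desired bound on $m_2-m_1m_3$.
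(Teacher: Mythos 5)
Your algebraic reduction is correct and clean: writing $m_i:=\langle\tilde\chi_i\rangle$ and expanding via $\chi_i=\tfrac14(1\pm\tilde\chi_1\pm\tilde\chi_2\pm\tilde\chi_3)$ does give $\theta_1(\theta_2+\theta_4)-\theta_4(\theta_1+\theta_3)=\theta_1\theta_2-\theta_3\theta_4=\tfrac14(m_2-m_1m_3)$. But the step where you then conclude $|m_1|,|m_2|\lesssim E^{1/4}$ is false, and a trivial counterexample shows it: take $\chi_1\equiv 1$ (a single phase). Then $e$ can be taken equal to the constant strain $e^{(1)}$, so $E=0$, while $\tilde\chi_1=\tilde\chi_2=\tilde\chi_3\equiv 1$, hence $m_1=m_2=m_3=1$. (The proposition still holds of course, because $m_2-m_1m_3=1-1=0$.) The bound $|m_j|\lesssim E^{1/4}$ simply cannot follow from any correct chain of inequalities.

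The precise point where your argument fails is the claim $\langle\partial_\sigma u\rangle=0$. In Lemma~\ref{lem:grad} the Leray projection $P$ acts on Fourier modes $k\neq0$ and necessarily annihilates the constant mode (otherwise $\|Pw\|_{L^2}=\|\nabla\times w\|_{H^{-1}}$ would fail). Consequently, for the identity $\|w-Qw\|_{L^2}=\|Pw\|_{L^2}$ used in Step~2 of Proposition~\ref{prop:3} to hold, $Q=I-P$ must preserve the $k=0$ mode, i.e.\ $\langle Qw\rangle=\langle w\rangle$. The paper's label ``$u$ periodic'' is shorthand for the decomposition $Qw=\langle w\rangle+\nabla(\text{periodic})$; the mean of $(Qw)_\sigma$ equals $m_j$, not zero. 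So $\langle\tilde\chi_j-\partial_\sigma u\rangle=0$ trivially, and Cauchy–Schwarz buys you nothing about $m_j$.

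What the paper does instead is not to control $m_1,m_2$ individually, but only the combination. The $H^{-1}_{full}$ estimate of Proposition~\ref{prop:3}(2), evaluated at $k=0$, gives $|m_1-\langle\partial_2\tilde g\rangle|\lesssim E^{1/4}$ and $|m_2-\langle f_{(100)}\partial_2\tilde g\rangle|\lesssim E^{1/4}$; the hypothesis gives $|m_3-\langle f_{(100)}\rangle|\lesssim E^{1/4}$; and the \emph{uncorrelatedness} lemma (Step~1 of Proposition~\ref{prop:4}), which exploits that $f_{(100)}\circ\Phi$ and $\tilde g\circ\Phi$ depend on the independent variables $s$ and $t$ respectively, gives the factorization $\langle f_{(100)}\partial_2\tilde g\rangle=\langle f_{(100)}\rangle\langle\partial_2\tilde g\rangle$. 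Combining these yields $m_2\approx m_1m_3$ up to an $O(E^{1/4})$ error, without either $m_1$ or $m_2$ being small. Your reduction to $m_2-m_1m_3$ is the right target, but reaching it requires the second part of Proposition~\ref{prop:3} plus the uncorrelatedness argument, not just the Helmholtz split.
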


\medskip

\begin{proof}[Proof of Proposition \ref{prop:4}]
\textbf{Step 1:} \itshape Uncorrelatedness. Let $f, g: \mathbb{R}^{2} \rightarrow \mathbb{R} $; suppose $f, g$ to be one-periodic in $y_{2}$ and assume
\begin{align*}
 &(f\circ\Phi)(s,t) = h(s),\\
 &(g\circ\Phi)(s,t) = l(t). 
\end{align*}
Then we have (using the notation of Definition \ref{defi:FT})
\begin{align*}
 \langle f \rangle_{\mathbb{T}^{2}} \langle g \rangle _{\mathbb{T}^{2}}
= \langle f g \rangle_{\mathbb{T}^{2}}.
\end{align*}
\upshape Exploiting the properties of $\Phi$ and using the periodicity of $f$, $g$, we obtain: 
\begin{align*}
\int\limits_{\Phi([-\frac{1}{2},\frac{1}{2}]^{2})}{f(y_{1},y_{2})dy_{1}dy_{2}}
& = \int\limits_{-\frac{1}{2}}^{\frac{1}{2}}\int\limits_{-F_{(100)}(y_{1})-\frac{1}{2}}^{-F_{(100)}(y_{1})+\frac{1}{2}}{f(y_{1},y_{2})dy_{2}dy_{1}}\\
& = \int\limits_{-\frac{1}{2}}^{\frac{1}{2}}\int\limits_{-\frac{1}{2}}^{\frac{1}{2}}{f(y_{1},y_{2})dy_{1}dy_{2}}.
\end{align*}
Thus, we have
\begin{align*}
& \langle f \rangle _{\mathbb{T}^{2}} = \langle f \rangle_{\Phi([-\frac{1}{2},\frac{1}{2}]^{2})},\\
& \langle g \rangle _{\mathbb{T}^{2}} = \langle g\rangle_{\Phi([-\frac{1}{2},\frac{1}{2}]^{2})},\\
& \langle fg \rangle _{\mathbb{T}^{2}} = \langle fg \rangle_{\Phi([-\frac{1}{2},\frac{1}{2}]^{2})}.
\end{align*}
An application of Fubini's theorem finally proves the claim
\begin{align*}
 \langle f \rangle _{\mathbb{T}^{2}}\langle g \rangle _{\mathbb{T}^{2}}
&= \langle f \rangle _{\Phi([-\frac{1}{2},\frac{1}{2}]^{2})}\langle g \rangle _{\Phi([-\frac{1}{2},\frac{1}{2}]^{2})}
 = \langle f\circ\Phi \rangle_{[-\frac{1}{2},\frac{1}{2}]^{2}}\langle g\circ\Phi \rangle_{[-\frac{1}{2},\frac{1}{2}]^{2}} \\
&= \left( \int\limits_{-\frac{1}{2}}^{\frac{1}{2}} h(s) ds \right) \left( \int\limits_{-\frac{1}{2}}^{\frac{1}{2}} l(t) dt \right) 
= \int\limits_{-\frac{1}{2}}^{\frac{1}{2}}\int\limits_{-\frac{1}{2}}^{\frac{1}{2}}h(s)l(t)dsdt \\
& = \langle (fg)\circ\Phi \rangle_{[-\frac{1}{2},\frac{1}{2}]^{2}} = \langle fg \rangle_{\Phi([-\frac{1}{2},\frac{1}{2}]^{2})}
  = \langle fg \rangle_{\mathbb{T}^{2}}.
\end{align*}

\textbf{Step 2:} \itshape Proof of the proposition.\\
\upshape 
By definition of $\tilde{\chi}_{i}$ we can rewrite the expressions $\theta_{1}(\theta_{2} + \theta_{4})$,
$\theta_{4}(\theta_{1} + \theta_{3})$ as combinations of the modified characteristic functions:
\begin{align}
\label{eq:P5.1}
 & \theta_{1}(\theta_{2} + \theta_{4}) = \left \langle \frac{1-\tilde{\chi}_{3}}{2} \right \rangle \left\langle \frac{1}{4}(1 + \tilde{\chi}_{2} + \tilde{\chi}_{3} + \tilde{\chi}_{1}) \right\rangle, \\
\label{eq:P5.2}
 & \theta_{4}(\theta_{1} + \theta_{3}) = \left \langle 1- \frac{1-\tilde{\chi}_{3}}{2} \right \rangle \left\langle \frac{1}{4}(1 - \tilde{\chi}_{2} - \tilde{\chi}_{3} + \tilde{\chi}_{1}) \right\rangle. 
\end{align}
Due to the $H^{-1}_{full}$-estimate from Proposition \ref{prop:3}, the mean values of $\tilde{\chi}_{2}$, $\tilde{\chi}_{1}$ are controlled by functions describing the crossing twin structures:
\begin{align}
& \left( \sum\limits_{k\in\mathbb{Z}^{2}}{\frac{1}{1+|k|^{2}}\left| \begin{pmatrix}  (\mathcal{F}\tilde{\chi}_{2})(k)\\ (\mathcal{F}\tilde{\chi}_{1})(k) \end{pmatrix}  - \begin{pmatrix} (\mathcal{F}f_{(100)}\partial_{2}\tilde{g})(k)\\ \mathcal{F}(\partial_{2}\tilde{g})(k) \end{pmatrix}
 \right|^{2} } \right)^{\frac{1}{2}} \lesssim E^{\frac{1}{4}} \nonumber\\
& \Rightarrow \left| \begin{pmatrix}  (\mathcal{F}\tilde{\chi}_{2})(0)\\ (\mathcal{F}\tilde{\chi}_{1})(0) \end{pmatrix} - \begin{pmatrix} (\mathcal{F}f_{(100)}\partial_{2}\tilde{g})(0)\\ (\mathcal{F}\partial_{2}\tilde{g})(0) \end{pmatrix} \right| \lesssim E^{\frac{1}{4}}\nonumber \\
\label{eq:P5.3}
& \Leftrightarrow
 \left\{ \begin{array}{ll}
 \left| \langle \tilde{\chi}_{2} \rangle  -  \langle f_{(100)}\partial_{2}\tilde{g} \rangle \right| \lesssim E^{\frac{1}{4}}\\
 \left| \langle \tilde{\chi}_{1} \rangle  -  \langle \partial_{2}\tilde{g} \rangle \right| \lesssim E^{\frac{1}{4}}.\\ 
\end{array}
\right.
\end{align}
As multiplicative constants are irrelevant for the scaling behavior, we can ignore the factor $\frac{1}{4}$ in the expressions (\ref{eq:P5.1}), (\ref{eq:P5.2}).
As a consequence of (\ref{eq:P5.3}) and (\ref{eq:p2.2}) we can -- taking into account an error of $E^{\frac{1}{4}}$ -- work with the approximative quantities:
\begin{align*}
& \langle 1 + f_{(100)} + \partial_{2} \tilde{g} + f_{(100)}\partial_{2} \tilde{g}\rangle \left\langle \frac{1-f_{(100)}}{2} \right\rangle\\
& = \langle  1 -  f_{(100)}\partial_{2} \tilde{g} - f_{(100)} + \partial_{2}\tilde{g} \rangle \left\langle 1 - \frac{1 - f_{(100)}}{2} \right\rangle\\
 \Leftrightarrow & \;\; \langle 1 + \partial_{2} \tilde{g} \rangle \langle 1- f_{(100)} \rangle = \langle 1 - f_{(100)}\partial_{2} \tilde{g} - f_{(100)} + \partial_{2}\tilde{g} \rangle.
\end{align*}
Remembering $(\tilde{g}\circ\Phi)(s,t) = g(t)$ for $s\in[a,a+1]$, where $a = \frac{k}{2}$, $k\in \mathbb{Z}$, step 1 yields
\begin{align*}
 \langle 1 + \partial_{2}\tilde{g}\rangle \langle 1- f_{(100)} \rangle  
= \langle (1 + \partial_{2}\tilde{g})(1- f_{(100)}) \rangle.  
\end{align*}
\end{proof}

\bigskip

\subsection{Proof of the Third Statement}

In this section we show that we can not only prove weak rigidity in the sense of having approximately the right volume fractions, but that we can also obtain rigidity in a strong norm.
As we use the interpolation inequality again, we however give up a factor of $\eta^{-\frac{2}{9}}$ in the scaling behavior.\\
Thus, the question whether we need the BV control once more can be posed. As a counterexample (c.f. Lemma \ref{lem:counter}) at the end of the section proves, this is indeed a necessary condition. 
Yet, the counterexample does not imply that the use of the BV control enforces a loss in the scaling behavior.

\medskip

\begin{prop}
\label{prop: 5}
Let $\eta\leq 1$. Assume $\tilde{\chi}_{1}, \; \tilde{\chi}_{2}, \; \tilde{\chi}_{3}: \mathbb{T}^{2} \rightarrow \mathbb{R}$, $E_{elast}, \, E$ are as in Definition \ref{defi:FT}. Let 
\begin{align*}
 \Phi(s,t):= (s, t - F_{(100)}(s))
\end{align*}
with $F_{(100)}^{\prime}(s):= f_{(100)}(s)$ a.e., $F_{(100)}(0)=0$.\\
Then there exists $g: \Phi^{-1}([-\frac{1}{2},\frac{1}{2}]^{2})\rightarrow \mathbb{R}$, $(s,t)\mapsto g(t) $  such that
\begin{align*}
 \left\| \tilde{\chi}_{1}\circ\Phi - g \right\|^{2}_{L^{2}(\Phi^{-1}([-\frac{1}{2},\frac{1}{2}]^{2}))} \lesssim \eta^{-\frac{2}{9}}E^{\frac{5}{6}}
\end{align*}
and
\begin{align*}
 \left\| \tilde{\chi}_{2}\circ\Phi - (f_{(100)}\circ\Phi)g \right\|_{L^{2}(\Phi^{-1}([-\frac{1}{2},\frac{1}{2}]^{2}))} \lesssim  \max\left\{ \eta^{-\frac{1}{9}}E^{\frac{5}{12}}, E^{\frac{1}{4}} \right\}.
\end{align*}
\end{prop}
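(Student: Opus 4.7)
The strategy is to upgrade the $H^{-1}_{full}$-closeness of $(\tilde{\chi}_2,\tilde{\chi}_1)$ to the gradient-type field supplied by Proposition \ref{prop:3} into $L^2$-closeness via a Kohn-M\"uller-type interpolation inequality, using the surface energy to furnish the necessary $BV$-bound. The $\tilde{\chi}_2$-estimate then follows from the pointwise identity $\tilde{\chi}_2 = \tilde{\chi}_3\tilde{\chi}_1$ combined with the $L^2$-bound $\|\tilde{\chi}_3 - f_{(100)}\|_{L^2(\mathbb{T}^2)} \lesssim E^{1/4}$ of Remark \ref{rem:7}(ii).

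First I pass to the $(s,t)$-coordinates via the bilipschitz map $\Phi$. Since $|\det D\Phi| = 1$ and the approximating functions $g,\tilde g$ from Proposition \ref{prop:3} are one-periodic in $t$, composition with $\Phi$ preserves $L^2$-, $H^{-1}_{full}$- and $BV$-norms up to universal constants. Because $(\partial_2 \tilde g)\circ\Phi(s,t) = g'(t)$, Proposition \ref{prop:3} immediately yields
\begin{equation*}
\bigl\|\tilde{\chi}_1\circ\Phi - g'\bigr\|_{H^{-1}_{full}} \lesssim E^{\frac{1}{4}},
\end{equation*}
i.e.\ $\tilde{\chi}_1\circ\Phi$ is $H^{-1}_{full}$-close to a function of $t$ alone. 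I take the $g$ of the current proposition to be the $L^2$-projection of $\tilde{\chi}_1\circ\Phi$ onto functions of $t$,
\begin{equation*}
g(t) := \int_{-\frac{1}{2}}^{\frac{1}{2}} (\tilde{\chi}_1 \circ \Phi)(s,t)\,ds;
\end{equation*}
this projection annihilates the $k_1 \neq 0$ Fourier modes and is therefore bounded on $H^{-1}_{full}$, so the triangle inequality gives $\|\tilde{\chi}_1\circ\Phi - g\|_{H^{-1}_{full}} \lesssim E^{1/4}$. Simultaneously, $E_{surf} \leq \eta^{-1/3} E$ yields $\|\tilde{\chi}_1\|_{BV(\mathbb{T}^2)} \lesssim \eta^{-1/3} E$, and since averaging does not increase the $BV$-norm, also $\|\tilde{\chi}_1 \circ \Phi - g\|_{BV} \lesssim \eta^{-1/3} E$.

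I then apply the interpolation inequality underlying Lemma \ref{lem:interpol}, but with the parameter kept free,
\begin{equation*}
\|f\|_{L^2}^2 \lesssim \lambda\,\|\nabla f\|_{\mathcal{M}}\,\|f\|_{L^\infty} + \lambda^{-2}\,\|f\|_{H^{-1}_{full}}^2, \qquad \lambda > 0,
\end{equation*}
to $f := \tilde{\chi}_1 \circ \Phi - g$. Using $\|f\|_{L^\infty} \lesssim 1$ and optimizing in $\lambda$ gives
\begin{equation*}
\|\tilde{\chi}_1 \circ \Phi - g\|_{L^2}^2 \lesssim \bigl(\eta^{-\frac{1}{3}} E\bigr)^{\frac{2}{3}}\bigl(E^{\frac{1}{4}}\bigr)^{\frac{2}{3}} = \eta^{-\frac{2}{9}} E^{\frac{5}{6}},
\end{equation*}
which is the first claim. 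For the second, I split
\begin{equation*}
\tilde{\chi}_2 \circ \Phi - (f_{(100)} \circ \Phi)\,g = \bigl((\tilde{\chi}_3 - f_{(100)})\circ\Phi\bigr)(\tilde{\chi}_1\circ\Phi) + (f_{(100)}\circ\Phi)(\tilde{\chi}_1\circ\Phi - g),
\end{equation*}
and combine the triangle inequality with $|\tilde{\chi}_1|, |f_{(100)}| \leq 1$, Remark \ref{rem:7}(ii) and the first claim to obtain $\lesssim E^{1/4} + \eta^{-1/9} E^{5/12} \lesssim \max\{\eta^{-1/9}E^{5/12}, E^{1/4}\}$. The principal technical point is the availability of the free-parameter interpolation in the periodic setting after composition with the bilipschitz change of variables $\Phi$; once this is granted, the balancing of the $(\eta^{-1/3} E)$-$BV$-bound against the $E^{1/4}$-$H^{-1}_{full}$-bound forces the exponents $-2/9$ and $5/6$ in the conclusion.
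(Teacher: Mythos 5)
Your overall strategy is the right one and agrees with the paper at the level of ideas: balance the $BV$-bound supplied by the surface energy against an $H^{-1}$-type bound on $\tilde{\chi}_1\circ\Phi - g$ via the multiplicative interpolation inequality, then derive the $\tilde{\chi}_2$-estimate from $\tilde{\chi}_2=\tilde{\chi}_3\tilde{\chi}_1$ and the $L^2$-closeness of $\tilde{\chi}_3$ to $f_{(100)}$. Your Step for $\tilde{\chi}_2$ is essentially identical to the paper's final step, and your exponent bookkeeping $(\eta^{-1/3}E)^{2/3}(E^{1/4})^{2/3}=\eta^{-2/9}E^{5/6}$ is correct. However, the route you take to the $\tilde{\chi}_1$-estimate differs from the paper's in a way that hides a genuine gap: you apply the interpolation inequality \emph{directly} to $\tilde{\chi}_1\circ\Phi-g$, whereas the paper passes through finite differences $\partial_s^h(\tilde{\chi}_1\circ\Phi)$ (Steps 1--4). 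This detour is not cosmetic. The map $\Phi(s,t)=(s,t-F_{(100)}(s))$ does not descend to a map of $\mathbb{T}^2$: since $F_{(100)}(y_1+1)-F_{(100)}(y_1)=\langle f_{(100)}\rangle$ need not vanish, $\big(\tilde{\chi}_1\circ\Phi-g\big)\circ\Phi^{-1}(y_1,y_2)=\tilde{\chi}_1(y_1,y_2)-g(y_2+F_{(100)}(y_1))$ is $y_2$-periodic but in general \emph{not} $y_1$-periodic. By contrast, $\big(\partial_s^h(\tilde{\chi}_1\circ\Phi)\big)\circ\Phi^{-1}$ \emph{is} $\mathbb{T}^2$-periodic precisely because $F_{(100)}(y_1+h)-F_{(100)}(y_1)$ is periodic in $y_1$ — the paper proves exactly this in Step 3. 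Consequently, neither your claim that composition with $\Phi$ "preserves $H^{-1}_{full}$-norms up to universal constants," nor your assertion that the $s$-averaging operator (a Fourier multiplier argument in the $(y_1,y_2)$-lattice) is bounded on $H^{-1}_{full}$ in the sheared $(s,t)$-picture, is available without further work: the $H^{-1}_{full}$-bound of Proposition \ref{prop:3}(2) lives on the $(y_1,y_2)$-torus, and its Fourier description does not transform covariantly under the shear.

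You flag this yourself ("the principal technical point is the availability of the free-parameter interpolation in the periodic setting after composition with the bilipschitz change of variables $\Phi$; once this is granted, …"), but granting it is the bulk of the work. The paper avoids the issue by (i) working with an $H^{-1}$-representation of $\partial_1\tilde{\chi}_1 - \partial_2(f_{(100)}\tilde{\chi}_1)$ through explicit $L^2$-controlled potentials $\rho_1,\rho_2$ (Step 1), (ii) transforming these \emph{constructively} under $\Phi$ via the distributional chain rule into $\rho_\sigma,\rho_\tau$ and integrating to obtain an explicit representation $\partial_s^h(\tilde{\chi}_1\circ\Phi)=\partial_t j_\tau + j$ with $\|j_\tau\|_{L^2},\|j\|_{L^2}\lesssim E^{1/4}$ (Step 2) — explicit potentials transform transparently under bilipschitz maps, abstract $H^{-1}$-norms do not — and (iii) applying interpolation on the torus to the genuinely periodic function $(\partial_s^h(\tilde{\chi}_1\circ\Phi))\circ\Phi^{-1}$, then recovering $L^1$-closeness of $\tilde{\chi}_1\circ\Phi$ to $g(t)=\int_{-1/2}^{1/2}\tilde{\chi}_1(\Phi(s,t))\,ds$ by a wave/averaging argument over $h$ together with the domain inclusions $\Phi^{-1}([-\tfrac12,\tfrac12]^2)\subset[-\tfrac r2,\tfrac r2]^2\subset\Phi^{-1}([-\tfrac k2,\tfrac k2]^2)$ (Step 4). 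To make your argument rigorous you would either have to reproduce these steps, or prove from scratch a bounded-domain version of the interpolation inequality on $\Phi^{-1}([-\tfrac12,\tfrac12]^2)$ together with a transfer lemma relating the torus $H^{-1}_{full}$-norm to a domain $H^{-1}$-norm under $\Phi$ — neither of which is a formality.
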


\bigskip

\begin{proof}
\textbf{Step 1:} \itshape We have
\begin{align}
\label{eq:P4.1}
 \left\| \nabla \cdot \begin{pmatrix}
                       \tilde{\chi}_{1}\\
		      -f_{(100)}\tilde{\chi}_{1}	
                      \end{pmatrix}
 \right\|_{H^{-1}(\mathbb{T}^{2})} \lesssim E^{\frac{1}{4}}.
\end{align}
\upshape 
This is a consequence of the estimate for $\tilde{\chi}_{3}$ as well as the characteristics. We have 
\begin{align}
\label{eq:P3.n}
 & \tilde{\chi}_{2} - \tilde{\chi}_{3}\tilde{\chi}_{1} = 0,\\
& \left\| \tilde{\chi}_{3} - f_{(100)} \right\|_{L^{2}([-\frac{1}{2}, \frac{1}{2}]^{2})} \lesssim E^{\frac{1}{4}}. \label{eq:P3.n1}
\end{align}

Combining these relations with Lemma \ref{lem:H-2per}, we obtain
\begin{align*}
  \left\| \nabla \cdot \begin{pmatrix}
                       \tilde{\chi}_{1}\\
		      -f_{(100)}\tilde{\chi}_{1}	
                      \end{pmatrix}
 \right\|_{H^{-1}(\mathbb{T}^{2})}
&= \left\|  \nabla \cdot \begin{pmatrix}
                       \tilde{\chi}_{1}\\
		      -f_{(100)}\tilde{\chi}_{1}	
                      \end{pmatrix} - \nabla \cdot \begin{pmatrix} \tilde{\chi}_{1}\\ -\tilde{\chi}_{3}\tilde{\chi}_{1}  \end{pmatrix} \right\|_{H^{-1}(\mathbb{T}^{2})}\\
& \;\;\;\;\;\; + \left\| \nabla \cdot \begin{pmatrix} \tilde{\chi}_{1}\\ -\tilde{\chi}_{3}\tilde{\chi}_{1}  \end{pmatrix} \right\|_{H^{-1}(\mathbb{T}^{2})}\\
& \stackrel{(\ref{eq:P3.n})}{\leq} \left\| \tilde{\chi}_{1}(-f_{(100)} + \tilde{\chi}_{3}) \right\|_{L^{2}(\mathbb{T}^{2})} + \left\| \partial_{1}\tilde{\chi}_{1} - \partial_{2}\tilde{\chi}_{2}  \right\|_{H^{-1}(\mathbb{T}^{2})}\\
& \stackrel{(\ref{eq:P3.n1})}{\lesssim} E^{\frac{1}{4}} + E_{elast}^{\frac{1}{2}},
\end{align*}
as Lemma \ref{lem:H-2per} states
\begin{align*}
& \left\| \partial_{1}\tilde{\chi}_{1} - \partial_{2}\tilde{\chi}_{2} \right\|_{H^{-1}(\mathbb{T}^{2})}
=  \left\| \nabla(\partial_{1}\tilde{\chi}_{1} - \partial_{2}\tilde{\chi}_{2}) \right\|_{H^{-2}(\mathbb{T}^{2})} \lesssim E_{elast}^{\frac{1}{2}}.
\end{align*}

\textbf{Step 2:} \itshape There exists $j_{\tau}, \; j: \Phi^{-1}([-\frac{1}{2}, \frac{1}{2}]^{2}) \rightarrow \mathbb{R}$ such that
\begin{align*}
& \partial_{s}^{h}\tilde{\chi}_{1}(\Phi(s,t)) = \partial_{t}j_{\tau}(s,t) + j(s,t),\\
& \int\limits_{\Phi^{-1}([-\frac{1}{2}, \frac{1}{2}]^{2})}{|j_{\tau}|^{2} + |j|^{2}dsdt} \lesssim E^{\frac{1}{2}}. 
\end{align*}
\upshape The characterization of the $H^{-1}$-norm implies the existence of $\rho_{1}, \rho_{2}: \mathbb{T}^{2} \rightarrow \mathbb{R}$ such that
\begin{align}
\label{eq:P312}
 &\partial_{1}\tilde{\chi}_{1} - \partial_{2}(f_{(100)}\tilde{\chi}_{1}) = \partial_{1}\rho_{1} + \partial_{2}\rho_{2} \mbox{ in } \mathcal{D}^{\prime},\\
& \int\limits_{\mathbb{T}^{2}}\rho_{1}^{2} + \rho_{2}^{2}dx \lesssim E^{\frac{1}{2}}.\nonumber 
\end{align}
Due to the structure of the change of coordinates $\Phi$, a distributional chain rule holds:
\begin{align*}
 \partial_{s}(\tilde{\chi}_{1}\circ\Phi) = \partial_{s}(\rho_{\sigma}\circ\Phi) + \partial_{t}(\rho_{\tau}\circ\Phi),
\end{align*}
for $\rho_{\sigma} = \rho_{1}$ and $\rho_{\tau}= f_{(100)}\rho_{1} + \rho_{2}$. Consequently we have:
\begin{align}
\label{eq:P31}
 \int\limits_{\Phi^{-1}(\mathbb{T}^{2})}|\rho_{\sigma}(\Phi(s,t))|^{2} + |\rho_{\tau}(\Phi(s,t))|^{2}dsdt \lesssim E^{\frac{1}{2}}.
\end{align}
As sums and products of $\left[ -\frac{1}{2}, \frac{1}{2} \right]^{2}$-periodic functions, $\rho_{\sigma}$ and $\rho_{\tau}$ are periodic as well.
A calculation as in Lemma \ref{lem:OS-1} converts this into
\begin{align*}
 \partial_{s}^{h}((\tilde{\chi}_{1}\circ\Phi) - (\rho_{\sigma}\circ\Phi))(s,t) 
& = \int\limits_{0}^{h}\frac{d}{dh^{\prime}}((\tilde{\chi}_{1}\circ\Phi) - (\rho_{\sigma}\circ\Phi))(s + h^{\prime},t)dh^{\prime}\\
& =  \frac{d}{dt}\int\limits_{0}^{h}{\rho_{\tau}(\Phi(s + h^{\prime}, t))dh^{\prime}}.
\end{align*}
Setting
\begin{align*}
 & j(s,t) := \partial_{s}^{h}\rho_{\sigma}(\Phi(s,t)),\\
& j_{\tau}(s,t):= \int\limits_{0}^{h}\rho_{\tau}(\Phi(s+h^{\prime},t))dh^{\prime},
\end{align*}
and using (\ref{eq:P31}), we obtain
\begin{align*}
& \partial_{s}^{h}\tilde{\chi}_{1}(\Phi(s,t)) = \partial_{t}j_{\tau}(s,t) + j(s,t),\\
& \int\limits_{\Phi^{-1}([-\frac{1}{2}, \frac{1}{2}]^{2})}{|j_{\tau}|^{2} + |j|^{2}dsdt} \lesssim E^{\frac{1}{2}}. 
\end{align*}

\textbf{Step 3:} \itshape Interpolation inequality. \\
\upshape We use the interpolation inequality of Lemma \ref{lem:interpol} applied to (the periodic function) $\phi = (\partial_{s}^{h}(\tilde{\chi}_{1}\circ\Phi))\circ\Phi^{-1}$
in its multiplicative version and carry out a change of coordinates to derive:
\begin{multline*}
\int\limits_{\Phi^{-1}(\mathbb{T}^{2})}{|\partial_{s}^{h}\tilde{\chi}_{1}(\Phi(s,t))|^{2}dsdt} \\
\begin{aligned}
& \leq \left(\int\limits_{\Phi^{-1}(\mathbb{T}^{2})}{|\nabla_{(s,t)}(\partial_{s}^{h}\tilde{\chi}_{1}(\Phi(s,t)))|dsdt }\sup|(\partial_{s}^{h}(\tilde{\chi}_{1}\circ\Phi))\circ\Phi^{-1}|\right)^{\frac{2}{3}}\\
& \;\;\;\;\; \times \left( \int\limits_{\Phi^{-1}(\mathbb{T}^{2})}{|j_{\tau}|^{2}+|j|^{2}}dsdt \right)^{\frac{1}{3}}.
\end{aligned}
\end{multline*}
More precisely, the multiplicative version is first used on the torus with the periodic functions 
$$ (\partial_{s}^{h}(\tilde{\chi}_{1}\circ\Phi))\circ\Phi^{-1}, \; j\circ\Phi^{-1}, \; j_{\tau}\circ\Phi^{-1}. $$
In a second step the transformation rule is applied together with the volume conservation of $\Phi$ which then yields the desired result.\\
In order to convince oneself of the periodicity of $(\partial_{s}^{h}(\tilde{\chi}_{1}\circ\Phi))\circ\Phi^{-1}, \; j\circ\Phi^{-1}, \; j_{\tau}\circ\Phi^{-1}$ we argue by 
inserting the change of coordinates:
\begin{align*}
 ((\partial_{s}^{h}(\tilde{\chi}_{1}\circ\Phi))\circ\Phi^{-1})(y_{1},y_{2}) =& \; \tilde{\chi}_{1}(y_{1} + h,y_{2}-F_{(100)}(y_{1}+h) +F_{(100)}(y_{1}))\\
& - \tilde{\chi}_{1}(y_{1},y_{2}).
\end{align*}
Here $F_{(100)}$ denotes the normalized antiderivative of $f_{(100)}$ (i.e. $F_{(100)}(0)=0$). Since $f_{(100)}$ is the mean value of a periodic function it is periodic itself.
Hence, the fundamental theorem of calculus implies that $F_{(100)}(y_{1}+h) - F_{(100)}(y_{1})$ inherits this property. Combined with the periodicity of $\tilde{\chi}_{1}$ 
this implies the periodicity of $(\partial_{s}^{h}(\tilde{\chi}_{1}\circ\Phi))\circ\Phi^{-1}$.\\
The periodicity of $j\circ\Phi^{-1}$ follows from the periodicity of $\rho_{\sigma}$.\\
A similar argument works for $j_{\tau}\circ\Phi^{-1}$: Due to
\begin{align*}
 (j_{\tau}\circ\Phi^{-1})(y_{1},y_{2}) = \int\limits_{0}^{h}{\rho_{\tau}(y_{1} + h^{\prime}, y_{2} - F_{(100)}(y_{1}+h^{\prime}) + F_{(100)}(y_{1}))dh^{\prime}},
\end{align*}
the periodicity of $F_{(100)}(y_{1}+h^{\prime}) - F_{(100)}(y_{1})$ and $\rho_{\tau}$ implies the claim.\\
Using an approximation argument the first term in the interpolation inequality can be estimated by the homogeneous BV norm of the martensitic phases.
Together with the $L^{\infty}$-bound for $\tilde{\chi}_{1}$, this yields the desired result:
\begin{multline*}
 \int\limits_{\Phi^{-1}(\mathbb{T}^{2})}{|\partial_{s}^{h}\tilde{\chi}_{1}(\Phi(s,t))|^{2}dsdt} \\
\begin{aligned}
& \leq \left(\int\limits_{\Phi^{-1}(\mathbb{T}^{2})}{|\nabla_{(s,t)}(\partial_{s}^{h}\tilde{\chi}_{1}(\Phi(s,t)))|dsdt }\sup|(\partial_{s}^{h}(\tilde{\chi}_{1}\circ\Phi))\circ\Phi^{-1}|\right)^{\frac{2}{3}}
\\
&\;\;\;\;\;\; \times \left( \int\limits_{\Phi^{-1}(\mathbb{T}^{2})}{|j_{\tau}|^{2}+|j|^{2}}dsdt \right)^{\frac{1}{3}}\\
& \lesssim \left(\int\limits_{\mathbb{T}^{2}}{|\nabla\chi_{1}| +|\nabla \chi_{4}|dx }\right)^{\frac{2}{3}}\left( \int\limits_{\Phi^{-1}(\mathbb{T}^{2})}{|j_{\tau}|^{2}+|j|^{2}}dsdt \right)^{\frac{1}{3}}\\
& \lesssim (E_{surf})^{\frac{2}{3}}E^{\frac{1}{6}} \lesssim (\eta^{-\frac{1}{3}}E)^{\frac{2}{3}}E^{\frac{1}{6}}\lesssim \eta^{-\frac{2}{9}}E^{\frac{5}{6}}.
\end{aligned}
\end{multline*}
Finally, the $L^{1}$-estimate follows from the discreteness of $\tilde{\chi}_{1}$.

\textbf{Step 4:} \itshape $L^{1}$-control.\\
There exists $g: \Phi^{-1}([-\frac{1}{2}, \frac{1}{2}]^{2}) \rightarrow \mathbb{R}$, $(s,t) \mapsto g(t)$ such that
\begin{align*}
& \int\limits_{\Phi^{-1}([-\frac{1}{2}, \frac{1}{2}]^{2})}|\tilde{\chi}_{1}(\Phi(s,t)) - g(t)|dsdt\\
& \lesssim \sup\limits_{|h|\leq 1}\int\limits_{\Phi^{-1}([-\frac{1}{2},\frac{1}{2}]^{2})}|\partial_{s}^{h}\tilde{\chi}_{1}(\Phi(s,t))|dsdt \lesssim \eta^{-\frac{2}{9}}E^{\frac{5}{6}}.
\end{align*}
\upshape As $\Phi$ is a bilipschitz mapping, we find $r\geq 1$ and $ k \in \mathbb{N}$, $k\geq 2$ such that the following inclusions are satisfied  $$\Phi^{-1}\left(\left[-\frac{1}{2},\frac{1}{2}\right]^{2}\right) \subset \left[ -\frac{r}{2}, \frac{r}{2} \right]^{2} \subset \left[ -r, r \right]^{2} \subset \Phi^{-1}\left(\left[-\frac{k}{2}, \frac{k}{2}\right]^{2}\right).$$
Thus, the claim is a consequence of the following estimates:
\begin{multline*}
 \sup\limits_{|h|\leq 1}\int\limits_{\Phi^{-1}([-\frac{k}{2}, \frac{k}{2}]^{2})}|\tilde{\chi}_{1}(\Phi(s+h,t)) - \tilde{\chi}_{1}(\Phi(s,t))|dsdt\\
\begin{aligned}
& \geq \frac{1}{2}\int\limits_{-1}^{1}\int\limits_{\left[ -r, r \right]^{2} }|\tilde{\chi}_{1}(\Phi(s+h,t)) - \tilde{\chi}_{1}(\Phi(s,t))|dsdtdh\\
& \gtrsim  \int\limits_{\left[ -\frac{r}{2}, \frac{r}{2} \right]^{2} }\int\limits_{-\frac{1}{2}}^{\frac{1}{2}}|\tilde{\chi}_{1}(\Phi(y,t)) -\tilde{\chi}_{1}(\Phi(s,t))|dydsdt\\
& \gtrsim \int\limits_{\left[ -\frac{r}{2}, \frac{r}{2} \right]^{2} }\left|\tilde{\chi}_{1}(\Phi(s,t)) - \int\limits_{-\frac{1}{2}}^{\frac{1}{2}}\tilde{\chi}_{1}(\Phi(y,t))dy \right|dsdt\\
& \gtrsim   \int\limits_{\Phi^{-1}([-\frac{1}{2}, \frac{1}{2}]^{2})}|\tilde{\chi}_{1}(\Phi(s,t)) - g(t)|dsdt,
\end{aligned}
\end{multline*}
where $g(t)= \int\limits_{-\frac{1}{2}}^{\frac{1}{2}}\tilde{\chi}_{1}(\Phi(y,t))dy $. \\
Since
\begin{align*}
  &\sup\limits_{|h|\leq 1}\int\limits_{\Phi^{-1}([-\frac{k}{2}, \frac{k}{2}]^{2})}|\partial_{s}^{h}\tilde{\chi}_{1}(\Phi(s,t))|dsdt\\
&\leq Ck^{2}\sup\limits_{|h|\leq 1}\int\limits_{\Phi^{-1}([-\frac{1}{2},\frac{1}{2}]^{2})}|\partial_{s}^{h}\tilde{\chi}_{1}(\Phi(s,t))|dsdt \lesssim \eta^{-\frac{2}{9}}E^{\frac{5}{6}},
\end{align*}
the claim is proven in the $L^{1}$-topology.\\
Due to the discreteness/boundedness of all quantities, we also obtain the $L^{2}$-estimate for $\tilde{\chi}_{1}$.\\

\textbf{Step 5:} \itshape Estimate for $\tilde{\chi}_{2}$.\\
\upshape Let $g:\Phi^{-1}([-\frac{1}{2}, \frac{1}{2}]^{2}) \rightarrow \mathbb{R}$ be the function from step 4. The estimate for $\tilde{\chi}_{1}$ in combination with the identity
 $\tilde{\chi}_{2} = \tilde{\chi}_{3}\tilde{\chi}_{1}$ yields:
\begin{multline*}
 \left\| \tilde{\chi}_{2}\circ\Phi - (f_{(100)}\circ\Phi)g \right\|_{L^{2}(\Phi^{-1}([-\frac{1}{2}, \frac{1}{2}]^{2}))} \\
\begin{aligned}
& = \left\| (\tilde{\chi}_{3}\circ\Phi)(\tilde{\chi}_{1}\circ\Phi) - (f_{(100)}\circ\Phi)g \right\|_{L^{2}(\Phi^{-1}([-\frac{1}{2}, \frac{1}{2}]^{2}))} \\
& \leq \left\| (\tilde{\chi}_{3}\circ\Phi)((\tilde{\chi}_{1}\circ\Phi) - g) \right\|_{L^{2}(\Phi^{-1}([-\frac{1}{2}, \frac{1}{2}]^{2}))}\\
& \;\;\;\;\;+ \left\| g((f_{(100)}\circ\Phi)- \tilde{\chi}_{3}\circ\Phi) \right\|_{L^{2}(\Phi^{-1}([-\frac{1}{2}, \frac{1}{2}]^{2}))}\\
& \leq \left\| \tilde{\chi}_{3}\circ\Phi \right\|_{L^{\infty}(\Phi^{-1}([-\frac{1}{2}, \frac{1}{2}]^{2})))} \left\|\tilde{\chi}_{1}\circ\Phi - g \right\|_{L^{2}(\Phi^{-1}([-\frac{1}{2}, \frac{1}{2}]^{2}))}\\
& \;\;\;\;\; + \left\| g\right\|_{L^{\infty}(\Phi^{-1}([-\frac{1}{2}, \frac{1}{2}]^{2})))} \left\|(f_{(100)}\circ\Phi)- \tilde{\chi}_{3}\circ\Phi \right\|_{L^{2}(\Phi^{-1}([-\frac{1}{2}, \frac{1}{2}]^{2}))}\\
& \lesssim \eta^{-\frac{1}{9}}E^{\frac{5}{12}} + E^{\frac{1}{4}}.
\end{aligned}
\end{multline*}

This proves the claim for $\tilde{\chi}_{2}$.
\end{proof}

\medskip

Finally, we present the counterexample stating that we have to use the BV control in order to obtain strong rigidity.

\begin{lem}
\label{lem:counter}
Let $f_{(100)}: \mathbb{T}^{2} \rightarrow \{-1, 1\}$, $(s,t)\mapsto f_{(100)}(s)$. Then there exists a sequence $\left\{ ( \tilde{\chi}_{1}^{(k)}, \tilde{\chi}_{2}^{(k)}, \tilde{\chi}_{3}^{(k)}) \right\}_{k\in\mathbb{N}}, \tilde{\chi}_{i}^{(k)}: \mathbb{T}^{2} \rightarrow \{-1,1\}$, 
and corresponding sequences $\left\{ u^{(k)} \right\}_{k \in \mathbb{N}}$, $u^{(k)}:\mathbb{T}^{2}\rightarrow \mathbb{R}$ and $\{g^{(k)}\}_{k \in \mathbb{N}}, \; g^{(k)}:\mathbb{T}^{2} \rightarrow \mathbb{R}$, $(s,t) \mapsto g^{(k)}(t)$ such that  
\begin{align}
\label{eq:G1}
& \left\| \begin{pmatrix} \tilde{\chi}_{2}^{(k)} - f_{(100)}\tilde{\chi}_{1}^{(k)} \\ \tilde{\chi}_{1}^{(k)} \end{pmatrix} - \begin{pmatrix} \partial_{s}u^{(k)} \\ \partial_{t}u^{(k)} \end{pmatrix} \right\|_{L^{2}(\mathbb{T}^{2})} \leq \frac{1}{k},\\
\label{eq:G2}
& \left\| u^{(k)} - g^{(k)} \right\|_{L^{2}(\mathbb{T}^{2})} \leq \frac{1}{k}, \\
\label{eq:G3}
&\left\| \partial_{s}u^{(k)} \right\|_{L^{2}(\mathbb{T}^{2})} \leq \frac{1}{k}, \\
\label{eq:G4}
&\tilde{\chi}_{2}^{(k)} - \tilde{\chi}_{3}^{(k)}\tilde{\chi}_{1}^{(k)} = 0, \\
& \tilde{\chi}_{3}^{(k)}= f_{(100)},
\end{align}
but
\begin{align*}
 \left\| \tilde{\chi}_{1}^{(k)} - f  \right\|_{L^{2}(\mathbb{T}^{2})}\geq C
\end{align*}
for all $f:\mathbb{T}^{2} \rightarrow \mathbb{R}$, $(s,t) \mapsto f(t)$. Here $C>0$ is a universal constant.
\end{lem}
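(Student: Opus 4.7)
The plan is to construct an explicit counterexample via a high-frequency mean-zero oscillation. Let $\phi : \mathbb{R} \to \{-1,1\}$ be the one-periodic square wave defined by $\phi = 1$ on $[0,1/2)$ and $\phi = -1$ on $[1/2,1)$, and let $\Phi$ denote its (bounded, one-periodic) antiderivative normalised by $\Phi(0) = 0$. For each $k \in \mathbb{N}$ I would set
\begin{align*}
\tilde{\chi}_1^{(k)}(s,t) &:= \phi(kt+s), \\
\tilde{\chi}_2^{(k)}(s,t) &:= f_{(100)}(s)\,\phi(kt+s), \\
\tilde{\chi}_3^{(k)}(s,t) &:= f_{(100)}(s), \\
u^{(k)}(s,t) &:= \tfrac{1}{k}\Phi(kt+s), \qquad g^{(k)} \equiv 0.
\end{align*}
All four objects are one-periodic on $\mathbb{T}^2$ and take values in $\{-1,1\}$ where required; a short case distinction on the sign of $f_{(100)}$ confirms that the associated $\chi_i^{(k)}$ defined by the formulas in Definition \ref{def:11} lie in $\{0,1\}$ with $\sum_{i} \chi_i^{(k)} = 1$. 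The identity $\tilde{\chi}_3^{(k)} = f_{(100)}$ and the algebraic relation (\ref{eq:G4}) hold tautologically.

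Next I would verify the gradient-approximation properties by direct computation. Since $\partial_t u^{(k)} = \phi(kt+s) = \tilde{\chi}_1^{(k)}$ and $\tilde{\chi}_2^{(k)} - f_{(100)}\tilde{\chi}_1^{(k)} = 0$ pointwise, the vector difference inside (\ref{eq:G1}) reduces to $(-\partial_s u^{(k)},\, 0)$ with $\partial_s u^{(k)} = k^{-1}\phi(kt+s)$. Both (\ref{eq:G1}) and (\ref{eq:G3}) therefore collapse to the single estimate $\|\partial_s u^{(k)}\|_{L^2(\mathbb{T}^2)} = k^{-1}$. The uniform bound on $\Phi$ gives $\|u^{(k)}\|_{L^\infty(\mathbb{T}^2)} \lesssim k^{-1}$, so (\ref{eq:G2}) follows with $g^{(k)} \equiv 0$ after an absolute constant is absorbed into a shift of $k$.

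The main content of the lemma is the uniform lower bound $\|\tilde{\chi}_1^{(k)} - f\|_{L^2(\mathbb{T}^2)} \geq C > 0$ for every $f : \mathbb{T} \to \mathbb{R}$ depending only on $t$. For fixed $t$ the map $s \mapsto \phi(kt+s)$ is a translate of $\phi$, hence is one-periodic, takes values in $\{-1,1\}$, and has mean zero. Expanding the square and applying Fubini therefore yields
\begin{align*}
\int_{\mathbb{T}^2} |\tilde{\chi}_1^{(k)} - f(t)|^{2}\, ds\, dt
= \int_0^1 \left( 1 - 2 f(t) \int_0^1 \phi(kt+s)\, ds + f(t)^2 \right) dt
= 1 + \|f\|_{L^2}^{2} \geq 1,
\end{align*}
so the claim holds with $C = 1$.

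The only mildly delicate point is the boundedness and periodicity of $\Phi$, but both follow from the vanishing mean of $\phi$ on one period. Conceptually, the example isolates the mechanism responsible: the surface energy $E_{surf}$ of $\tilde{\chi}_1^{(k)}$ scales like $k$ and blows up as $k \to \infty$, whereas every $L^2$- and $H^{-1}$-type quantity entering the rigidity argument of Proposition \ref{prop: 5} remains uniformly bounded. This is precisely why the $BV$ bound must be invoked there; without it, fast oscillations of the form $\phi(kt+s)$ cannot be distinguished in the argument from genuine functions of $t$ alone.
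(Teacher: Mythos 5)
Your proof is correct, and it takes a genuinely different and in fact simpler route than the paper's. The paper constructs an explicit piecewise-affine potential $u$ on the torus whose gradient is a $\{-1,1\}^2$-valued ``diamond'' pattern, and then rescales anisotropically via $u^{(k)}(s,t) = k^{-2}u(ks,k^2t)$; the lower bound on $\|\tilde{\chi}_1^{(k)} - f\|_{L^2}$ is obtained by computing the $s$-average of $\partial_t u$ on one cell and bounding the deviation from below, yielding a (non-sharp) constant $C = 1/8$. You instead use a one-dimensional square wave in the sheared variable $kt+s$, so that $\tilde{\chi}_1^{(k)}(s,t) = \phi(kt+s)$. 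The crucial structural observation in your argument is that for every fixed $t$ the map $s \mapsto \phi(kt+s)$ traverses one full period of the mean-zero function $\phi$; the $s$-mean therefore vanishes identically, so the $L^2$-projection of $\tilde{\chi}_1^{(k)}$ onto functions of $t$ alone is exactly zero, giving the sharp lower bound $C=1$ by a one-line Pythagoras computation. The verifications of (\ref{eq:G1})--(\ref{eq:G4}) and the admissibility of the induced $\chi_i^{(k)}$ (the single constraint $\tilde{\chi}_1\tilde{\chi}_2\tilde{\chi}_3 = 1$) are all correct, and your conceptual remark that $E_{\mathrm{surf}}(\tilde{\chi}_1^{(k)}) \sim k$ while all $L^2$- and $H^{-1}$-type quantities stay bounded is exactly the point of the lemma. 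The paper's more elaborate two-scale construction gives nothing extra for the statement being proved, though it has the stylistic virtue of resembling the branching-type patterns used elsewhere in Section 2.4; your shear construction is the more economical choice for this particular counterexample.
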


\medskip

\begin{proof}
In order to define the sequence $\tilde{\chi}_{i}^{(k)}$, we first construct a sample which is then rescaled appropriately.
For that purpose, consider the function $u:\mathbb{T}^{2}\rightarrow \mathbb{R}$, whose gradient is depicted in Figure \ref{fig:grad} on the interval $\left[ -\frac{1}{2}, \frac{1}{2} \right] ^{2}$
and for which we have $u=0$ on $\left\{ s=t \right\} \cap \left[ -\frac{1}{2}, 0 \right]^{2}$ and on $\left\{ t=-s \right\} \cap \left\{ \left[ 0, \frac{1}{2} \right]\times\left[ -\frac{1}{2} , 0 \right] \right\}$.
This is to be continued periodically.

\begin{figure}[htbp]
\centering
\psset{unit=4.7cm}
 \begin{pspicture}(-0.75,-0.8)(0.75,0.8)
\psline{->}(-0.75,0)(.75,0)
\psline{->}(0,-.75)(0,.75)
\psline(-0.5,-0.5)(0,0)
\psline(0,0)(0.5,-0.5)
\psline(-0.5,0)(0,0.5)
\psline(0,0.5)(0.5,0)
\psline[linestyle=dotted](-0.5,-.75)(-0.5,.75)
\psline[linestyle=dotted](0.5,-.75)(0.5,.75)
\psline[linestyle=dotted](-0.75,-0.5)(0.75,-0.5)
\psline[linestyle=dotted](-0.75,0.5)(0.75,0.5)  
\put(0.05,-0.4){$\begin{pmatrix} 1 \\ 1 \end{pmatrix}$}
\put(-0.25,-0.4){$\begin{pmatrix} -1 \\ 1 \end{pmatrix}$}
\put(0.15,-0.03){$\begin{pmatrix} -1 \\ -1 \end{pmatrix}$}
\put(-0.35,-0.03){$\begin{pmatrix} 1 \\ -1 \end{pmatrix}$}
\put(-0.48,0.3){$\begin{pmatrix} -1 \\ 1 \end{pmatrix}$}
\put(0.3,0.3){$\begin{pmatrix} 1 \\ 1 \end{pmatrix}$}
\put(0.5,-0.1){$\frac{1}{2}$}
\psline(0.5,-0.04)(0.5,0.04)
\put(0.075,0.5){$\frac{1}{2}$}
\psline(-0.04,0.5)(0.04,0.5)
\put(0.78,-0,1){$s$}
\put(0.1, 0.76){$t$}
 \end{pspicture}
\caption{On the torus $ \partial_{s}u $ is depicted in the first and $ \partial_{t}u$ in the second component.}
\label{fig:grad}
\end{figure}
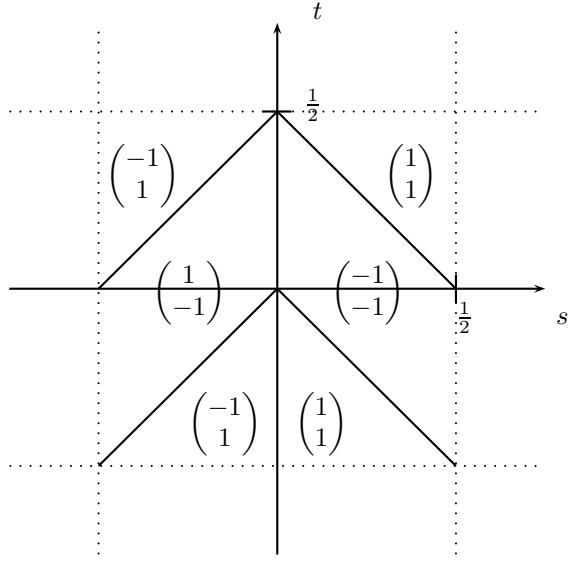

On $\left[ -\frac{1}{2}, \frac{1}{2} \right]\times \left[ -\frac{1}{4}, -\frac{3}{8} \right]$ the $s$-average of the $t$-derivative is given by
\begin{align*}
 \int\limits_{-\frac{1}{2}}^{\frac{1}{2}}{\partial_{t}u(s, \cdot)ds} = -4t -1.
\end{align*}
Thus, the deviation from the average can be estimated by
\begin{align}
\label{eq:G5}
\int\limits_{\left[ -\frac{1}{2}, \frac{1}{2} \right]^{2}}\left| \partial_{t}u - \int\limits_{-\frac{1}{2}}^{\frac{1}{2}} \partial_{t}u(s^{\prime},\cdot)ds^{\prime} \right|^{2}dsdt &\geq \int\limits_{\left[ -\frac{1}{4}, \frac{1}{4} \right]\times \left[ -\frac{1}{4}, -\frac{3}{8} \right]}{\left|1 + 4\left(t+\frac{1}{4}\right)\right|^{2}dsdt} \nonumber \\
 &\geq \frac{1}{2}\cdot\frac{1}{8}\cdot\frac{1}{4} = \frac{1}{64} > 0.
\end{align}
We rescale and set
\begin{align*}
 u^{(k)}(s,t) := \frac{1}{k^{2}}u(ks,k^{2}t), \; k\in\mathbb{N}.
\end{align*}
Further we define
\begin{align*}
& \tilde{\chi}_{1}^{(k)} := \partial_{t}u^{(k)} \in \{ -1,1 \},\\
& \tilde{\chi}_{2}^{(k)} := f_{(100)}\partial_{t}u^{(k)} \in \{ -1,1 \},\\
& \tilde{\chi}_{3}^{(k)} := f_{(100)}.
\end{align*}
Hence, (\ref{eq:G1}), (\ref{eq:G2}), (\ref{eq:G3}), (\ref{eq:G4}) are satisfied since
\begin{align*}
 \left\| \begin{pmatrix} \tilde{\chi}_{2}^{(k)} - f_{(100)}\tilde{\chi}_{1}^{(k)}\\ \tilde{\chi}_{1}^{(k)} \end{pmatrix} - \begin{pmatrix} \partial_{s}u^{(k)} \\ \partial_{t}u^{(k)} \end{pmatrix} \right\|_{L^{2}(\mathbb{T}^{2})}
 = \left\| \partial_{s}u^{(k)} \right\|_{L^{2}(\mathbb{T}^{2})} \leq \frac{1}{k}, 
\end{align*}
and since due to $|u| \leq \frac{1}{2}$, we have $|u^{(k)}| \leq \frac{1}{k^{2}}$. Therefore, $\left\| u^{(k)} \right\|_{L^{2}(\mathbb{T}^{2})} \leq \frac{1}{k^{2}}.$ Due to (\ref{eq:G5}), we obtain
\begin{align*}
 \int\limits_{\left[-\frac{1}{2}, \frac{1}{2} \right]^{2}}{|\tilde{\chi}^{(k)}_{1} -  \int\limits_{-\frac{1}{2}}^{\frac{1}{2}} \tilde{\chi}_{1}^{(k)}(s^{\prime},\cdot)ds^{\prime}|^{2}dsdt} \geq \frac{1}{64}.
\end{align*}
Finally, this leads to
\begin{align*}
 \inf\limits_{f(t)\in L^{2}([-\frac{1}{2},\frac{1}{2}])}\left\| \tilde{\chi}_{1}^{(k)} -f \right\|^{2}_{L^{2}(\mathbb{T}^{2})} \geq \frac{1}{64} >0,
\end{align*}
as taking the mean value corresponds to the $L^{2}$-projection on the space of constants.
\end{proof}

\bigskip

\subsection{Optimality and Branching}

In this final section we give an upper bound construction to derive the optimal scaling behavior of incompatible microstructures. Combined with Theorem \ref{thm:2}, this proves
Proposition \ref{cor:1}.
The construction relies on an idea introduced in \cite{CO08}.

\medskip

\begin{lem}
\label{lem:branch}
Let $\eta \ll 1$, consider
\begin{align*}
\MoveEqLeft{ E_{\eta}(e,\chi) :=  \eta^{\frac{1}{3}}\int\limits_{(0,1)^{2}}{|\nabla \chi_{1}| + |\nabla \chi_{2}| + |\nabla \chi_{3}| + |\nabla \chi_{4}|dy}}\\
	&	 +  \eta^{-\frac{2}{3}}\int\limits_{(0,1)^{2}}{\left| e - \begin{pmatrix} 
		                                                           d_{1} & \tilde{\chi}_{3} & \tilde{\chi}_{2}\\
									   \tilde{\chi}_{3} & d_{2} & \tilde{\chi}_{1}\\
									   \tilde{\chi}_{2} & \tilde{\chi}_{1} & d_{3}
		                                                          \end{pmatrix} \right|^{2}dy}.
\end{align*}
For $\delta = \frac{3}{16}$ there exists a family of $(0,1)^{2}$-periodic strain tensors
$\{e_{\eta}\}_{\eta}\in\Sym(3,\mathbb{R})$, $e_{\eta} = e_{\eta}(y_{1},y_{2})$, and a family $\{ \chi^{\eta} =(\tilde{\chi}_{2}^{\eta}, \tilde{\chi}_{3}^{\eta}, \tilde{\chi}_{1}^{\eta}) \}_{\eta}$,
$\tilde{\chi}_{i}\in\{-1,1\}$ such that
\begin{equation*}
 E_{\eta}(e_{\eta},\chi^{\eta}) \lesssim 1,
\end{equation*}
and
\begin{align}
\label{eq:Mikr1}
& |\theta^{\eta}_{1}(\theta^{\eta}_{2} + \theta^{\eta}_{4}) - \theta^{\eta}_{4}(\theta^{\eta}_{1}+\theta^{\eta}_{3})| \geq \delta, \\
\label{eq:Mikr2}
& |\theta^{\eta}_{1}(\theta^{\eta}_{2} + \theta^{\eta}_{4}) - \theta^{\eta}_{2}(\theta^{\eta}_{1}+\theta^{\eta}_{3})| \geq \delta.
\end{align}
\end{lem}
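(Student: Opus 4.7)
I would realize the microstructure $(e_\eta, \chi^\eta)$ explicitly using a Kohn--M\"uller--type self-similar branching construction at fine scale $\lambda_\eta := \eta^{1/3}$, in the spirit of \cite{CO08}. First I fix target volume fractions $\theta^* = (\theta_1^*, \theta_2^*, \theta_3^*, \theta_4^*)$ with $\sum_i \theta_i^* = 1$ for which both quantities in (\ref{eq:Mikr1})--(\ref{eq:Mikr2}) are strictly bounded below by $\delta$; the construction will realize $\theta_i^\eta = \theta_i^* + O(\lambda_\eta)$, so the incompatibility survives for $\eta$ small.

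Next I partition $(0,1)^2$ into two horizontal strips of heights $h_1 = \theta_1^* + \theta_4^*$ and $h_2 = \theta_2^* + \theta_3^*$. In the first strip I place a simple laminate of variants $e^{(1)}$ and $e^{(4)}$ alternating in $y_1$ at fine period $\lambda_\eta$, with relative widths $\theta_1^*/h_1$ and $\theta_4^*/h_1$; a direct computation gives
\[
e^{(1)} - e^{(4)} = \tfrac{\epsilon}{d}\bigl(a\otimes e_1 + e_1\otimes a\bigr),\qquad a = (0,1,1)^t,
\]
so the pair is rank-one compatible with interface normal $e_1$ and the laminate admits an exact piecewise-affine symmetric-gradient realization with vanishing local elastic energy. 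In the second strip I do the same for the pair $e^{(2)}, e^{(3)}$, which is also rank-one compatible along $e_1$ by the analogous calculation.

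The averages of these two laminates are generically \emph{not} rank-one compatible with interface normal $e_2$, so the two strip-patterns cannot be glued directly. In a transition layer of vertical width $\sim\lambda_\eta$ around each horizontal strip-interface I insert a dyadic self-similar branching tree refining the laminate period, and construct $u$ level by level so that each internal jump in the tree is between rank-one compatible laminate averages. After adding an affine correction $\bar A y$ that fixes the macroscopic mean of $e(u)$ to equal the mean of the target matrix $M(\tilde\chi^\eta)$, one obtains a displacement $u \in W^{1,\infty}$ whose symmetric gradient $e_\eta = e(u)$ is $(0,1)^2$-periodic and satisfies
\[
\int_{(0,1)^2}\bigl|e_\eta - M(\tilde\chi^\eta)\bigr|^2\,dy \;\lesssim\; \lambda_\eta^2.
\]
The surface energy is dominated by the $\sim\lambda_\eta^{-1}$ finest-scale vertical interfaces in the bulk of each strip, giving $E_{surf}\lesssim\lambda_\eta^{-1}$. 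Substituting $\lambda_\eta = \eta^{1/3}$ into $E_\eta = \eta^{1/3}E_{surf} + \eta^{-2/3}E_{elast}$ yields the uniform bound $E_\eta\lesssim 1$, and the transition layers having total area $O(\lambda_\eta) = o(1)$ gives $\theta_i^\eta\to\theta_i^*$, so (\ref{eq:Mikr1})--(\ref{eq:Mikr2}) hold for all sufficiently small $\eta$.

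The hardest step is the branching construction inside the transition layer: one has to specify the vertical offsets of the displacement at every dyadic level so that coarse and refined laminates meet along rank-one compatible interfaces, check that the pieces glue into a single $W^{1,\infty}$-field that is periodic modulo the affine part, and obtain the sharp $L^2$-mismatch bound $\lambda_\eta^2$ (rather than the naive $\lambda_\eta$) by summing a geometric series over the tree. The blueprint is the construction in \cite{CO08}; the novelty here is that one has to run it between two distinct rank-one compatible laminate pairs rather than between a single variant and a laminate average.
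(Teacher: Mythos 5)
Your overall strategy — crossing-twin strip laminates connected by a Kohn--M\"uller branching transition — is sound and close in spirit to the paper's, but you have a quantitatively decisive error in the geometry of the transition region that causes the energy bound to fail.

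The branching region cannot be confined to a strip of height $\sim\lambda_\eta=\eta^{1/3}$; it must span essentially the whole strip, i.e.\ have height $\sim 1$. Recall the standard per-row estimate for a branching cell of width $w_n$ and height $l_n$: the surface contribution is $\sim l_n/w_n$ and the elastic contribution is $\sim w_n^2/l_n$, and both series are dominated by the coarsest generation, giving $E_{surf}\sim l_1/w_1$ and $E_{elast}\sim w_1^2/l_1$. With your choices $w_1\sim\eta^{1/3}$ (the bulk laminate period) and $l_1\sim\eta^{1/3}$ (your claimed transition-layer height) one gets
\[
\eta^{-2/3}E_{elast}\;\sim\;\eta^{-2/3}\,\frac{w_1^{2}}{l_1}\;\sim\;\eta^{-2/3}\cdot\eta^{1/3}\;=\;\eta^{-1/3},
\]
which diverges as $\eta\to 0$; your claimed bound $E_{elast}\lesssim\lambda_\eta^{2}=\eta^{2/3}$ would require $l_1\sim 1$. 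Equivalently, the horizontal interface forces a displacement mismatch of order $\lambda_\eta$ between the two strip-laminates, and resolving it over a layer of width $\ell$ costs $\sim\lambda_\eta^{2}/\ell$; to get $\lambda_\eta^{2}$ one must take $\ell\sim 1$. So there can be no ``pure laminate bulk'' at period $\eta^{1/3}$: the coarsest branching cells must reach the middle of each strip (as in the paper, where $l_1\sim(1-\lambda)/2$ and $w_1\sim\eta^{1/3}(1-\lambda)$). Once you let the branching occupy the full strip, the remaining steps of your proposal — the rank-one computations $e^{(1)}-e^{(4)}$ and $e^{(2)}-e^{(3)}$ with normal $e_1$, the choice of volume fractions parametrized by the strip heights and laminate widths, and the $O(\lambda_\eta)$ drift of $\theta^\eta$ towards $\theta^*$ — are all fine and match the paper's choice of phases.

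One further remark on method: the paper does not construct $u$ explicitly. It takes $e_\eta$ to be a near-minimizer of $E_{elast}$ for fixed $\chi$, computes the resulting Fourier multiplier of the relaxed elastic energy, and then shows that the only nontrivial contribution comes from the scalar branching function $\sigma^\eta$, which is estimated via the dual ($H^{-1}$) formulation with an explicit ``field'' $h$. This avoids the bookkeeping you flag as ``the hardest step'' (specifying dyadic vertical offsets, gluing into a single $W^{1,\infty}$ field, periodization modulo an affine part). Your explicit-displacement route can be made rigorous, but only after fixing the transition-layer scaling as above, and at the cost of this extra bookkeeping; the multiplier route in the paper bypasses it entirely.
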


\medskip

\begin{proof}																		
\textbf{Step 1:} \itshape Choice of $e_{\eta}.$\\
\upshape  
Consider
\begin{align*}
 E_{\eta}(\chi) := \; & \eta^{\frac{1}{3}}\int\limits_{(0,1)^{2}}{|\nabla {\chi}_{1}| + |\nabla {\chi}_{2}| + |\nabla {\chi}_{3}| + |\nabla {\chi}_{4}|dx}\\
& + \eta^{-\frac{2}{3}}\inf\limits_{\nabla u \mbox{ periodic}} \left\{ \int\limits_{(0,1)^{2}}{
\left| \frac{\nabla u + (\nabla u)^{t}}{2} - 
\begin{pmatrix}
 d_{1} & \tilde{\chi}_{3} & \tilde{\chi}_{2} \\
\tilde{\chi}_{3} & d_{2} & \tilde{\chi}_{1}\\
\tilde{\chi}_{2} & \tilde{\chi}_{1} & d_{3}
\end{pmatrix}
 \right|^{2}
dx} \right\}\\ 
& =: \eta^{\frac{1}{3}} E_{surf}(\chi) + \eta^{-\frac{2}{3}} E_{elast}(\chi) . 
\end{align*}
For each $\chi$ and each $\eta\in(0,1)$ there exists $e_{\eta}$ such that
\begin{align*}
 |E_{\eta}(\chi) - E_{\eta}(e_{\eta},\chi)| \leq E_{\eta}(\chi),
\end{align*}
which especially implies
\begin{align*}
 E_{\eta}(e_{\eta},\chi) \leq 2 E_{\eta}(\chi).
\end{align*}
Thus, it suffices to estimate $E_{\eta}(\chi)$ in the sequel.\\

\textbf{Step 2:} \itshape Calculation of the Fourier multiplier for the elastic energy.\\
\upshape
In order to obtain the Fourier multiplier of the elastic energy the Euler-Lagrange-Equations of the elastic energy have to be computed.  
These are well known (c.f. \cite{CO08}) and lead to 
\begin{align*}
 E_{elast}(\chi) = \sum\limits_{\substack{ k\in \mathbb{Z}^{2} \\ k\neq 0}}{|k|^{-4}(|k|^{4}|\mathcal{F}{\tilde{U}}_{0}|^{2} - 2|k|^{2}|\mathcal{F}{\tilde{U}}_{0}k|^{2} + |k\cdot \mathcal{F}{\tilde{U}}_{0}k|^{2})},
\end{align*}
where
\begin{align*}
 \tilde{U}_{0}=\begin{pmatrix}
 d_{1} & \tilde{\chi}_{3} & \tilde{\chi}_{2} \\
\tilde{\chi}_{3} & d_{2} & \tilde{\chi}_{1}\\
\tilde{\chi}_{2} & \tilde{\chi}_{1} & d_{3}
\end{pmatrix}.
\end{align*}
As the Fourier multiplier ignores the mode $k=0$, i.e. constants, we can use 
\begin{align*}
 U_{0}:= \begin{pmatrix}
 0 & \tilde{\chi}_{3} & \tilde{\chi}_{2} \\
\tilde{\chi}_{3} & 0 & \tilde{\chi}_{1}\\
\tilde{\chi}_{2} & \tilde{\chi}_{1} & 0
\end{pmatrix}
\end{align*}
instead. Consequently the summands in the multiplier can be computed as
\begin{align*}
&|k|^{4}|\mathcal{F}{U}_{0}|^{2} = 2(k_{1}^{4} + k_{2}^{4} + 2k_{1}^{2}k_{2}^{2} )(|\mathcal{F}{\tilde{\chi}}_{2}|^{2} + |\mathcal{F}{\tilde{\chi}}_{3}|^{2} + |\mathcal{F}{\tilde{\chi}}_{1}|^{2}),\\
& |k|^{2}|\mathcal{F}{U}_{0}k|^{2} =  (k_{1}^{2} + k_{2}^{2})(k_{1}^{2} |\mathcal{F}{\tilde{\chi}}_{2}|^{2} + (k_{1}^{2} + k_{2}^{2})|\mathcal{F}{\tilde{\chi}}_{3}|^{2} \\ 
&  \;\;\;\;\;+k_{2}^{2}|\mathcal{F}{\tilde{\chi}}_{1}|^{2} + 2 Re(k_{1}\overline{\mathcal{F}{\tilde{\chi}}_{2}}k_{2}\mathcal{F}{\tilde{\chi}}_{1})),\\
& (k\cdot \mathcal{F}{U}_{0}k)^{2} = 4 k_{1}^{2}k_{2}^{2}|\mathcal{F}{\tilde{\chi}}_{3}|^{2}.
\end{align*}

Therefore the multiplier is determined by the following expressions:
\begin{align}
\MoveEqLeft{ |k|^{-4}2( k_{1}^{4}|\mathcal{F}{\tilde{\chi}}_{1}|^{2} + k_{2}^{4}|\mathcal{F}{\tilde{\chi}}_{2}|^{2} 
 + k_{1}^{2}k_{2}^{2}|\mathcal{F}{\tilde{\chi}}_{2}|^{2} + 2k_{1}^{2}k_{2}^{2}|\mathcal{F}{\tilde{\chi}}_{3}|^{2} 
+ k_{1}^{2}k_{2}^{2}|\mathcal{F}{\tilde{\chi}}_{1}|^{2}}\nonumber \\
& - 2k_{1}^{2}Re(k_{1}\overline{\mathcal{F}{\tilde{\chi}}_{2}}k_{2}\mathcal{F}{\tilde{\chi}}_{1}) - 2k_{2}^{2}Re(k_{1}\overline{\mathcal{F}{\tilde{\chi}}_{2}}k_{2}\mathcal{F}{\tilde{\chi}}_{1}))\nonumber \\
\label{eq:Fourier}
&=
2|k|^{-4}(|k|^{2}|k_{2}\mathcal{F}\tilde{\chi}_{2} - k_{1}\mathcal{F}\tilde{\chi}_{1}|^{2} + 2k_{1}^{2}k_{2}^{2}|\mathcal{F}\tilde{\chi}_{3}|^{2}).
\end{align}

\textbf{Step 3: }\itshape Introduction of the quantities involved in the construction.\\
\upshape 
For $\lambda \in [0,1]$, $(y_{1},y_{2})\in [0,1]^{2}$ we define $\sigma^{\eta}_{(\lambda,1)}(y_{1},y_{2})$ with 
\begin{align*}
 \sigma^{\eta}_{(\lambda,1)}(y_{1},y_{2}) := \left\{ \begin{array}{ll}
                        \in\{-1,1\}, & y_{2}\in(\lambda,1),\\
			   = 1-2\mu, &\mbox{else.}	 
                         \end{array} \right.
\end{align*}
For this purpose, we consider the following quantities:
\begin{align*}
& \mu, \; \lambda \in [0,1],\\
& N, w_{1}^{-1}\in \mathbb{N},\\
& w_{n} := 2^{-(n-1)}w_{1},\\
& l_{n} := 2^{-\beta(n-1)}l_{1}, \, \beta>0, \; n\in \{1,\cdots, N\},
\end{align*}
where we choose $l_{1}$ such that $\sum\limits^{N}_{n=1}{l_{n}} = \frac{1-\lambda}{2}.$
On $[0,1]\times [\lambda,1]$ we introduce a construction of rows of self-similar cells of width $w_{n}$ and height $l_{n}$ as depicted in Figure \ref{fig:branch}. Within each of the cells
we define $\sigma^{\eta}_{(\lambda,1)}$ as indicated in Figure \ref{fig:sigma}.

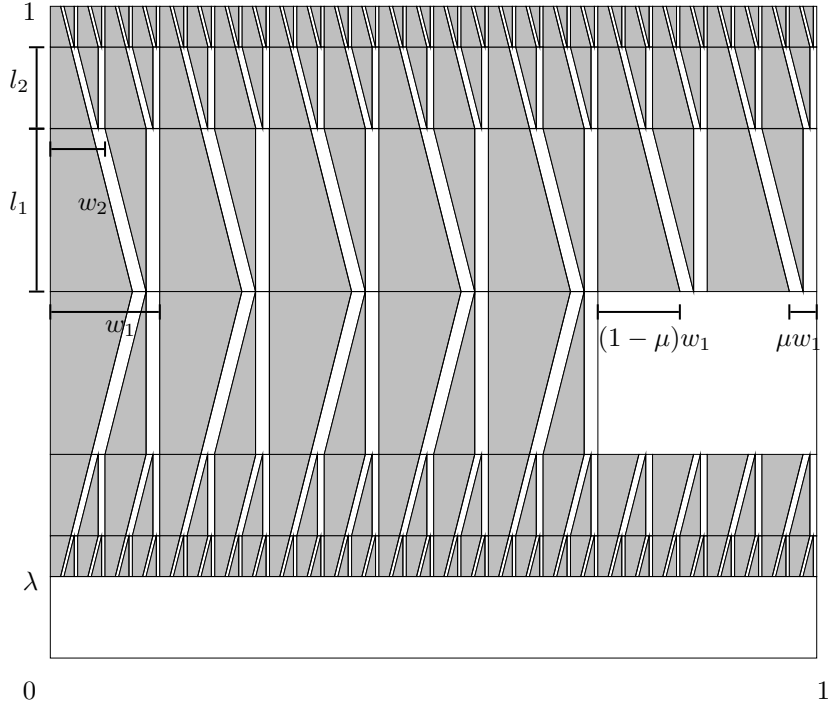
\begin{figure}[h]
 \begin{pspicture}(-1,-5.6)(4,3)
  \psset{unit=0.18cm, 
linewidth=0.3pt}
\psset{yunit=1.5}
\put(-2,-30){0}
\put(56,-30){1}
\put(-2,20){1}
\put(-2,-22){$\lambda$}
\psline[linewidth=0.75pt]{|*-|*}(-1,0)(-1,8)
\put(-3,6){$l_{1}$}
\psline[linewidth=0.75pt]{|*-|*}(-1,8)(-1,12)
\put(-3,15){$l_{2}$}
\pspolygon(0,-14)(0,-18)(56,-18)(56,-14)
  \multirput(0,0)(8,0){7}{\pspolygon[fillstyle=solid,
    fillcolor=lightgray]
  (0,0)(6,0)(3,8)(0,8)\pspolygon(6,0)(7,0)(4,8)(3,8)\pspolygon[fillstyle=solid,
    fillcolor=lightgray](7,0)(7,8)(4,8)\pspolygon(7,0)(8,0)(8,8)(7,8)}
 \multirput(0,0)(8,0){5}{\pspolygon[fillstyle=solid,
    fillcolor=lightgray]
  (0,0)(6,0)(3,-8)(0,-8)\pspolygon(6,0)(7,0)(4,-8)(3,-8)\pspolygon[fillstyle=solid,
    fillcolor=lightgray](7,0)(7,-8)(4,-8)\pspolygon(7,0)(8,0)(8,-8)(7,-8)}
\psline(56,0)(56,-8)
\psline[linewidth= 0.75pt]{|*-|*}(0,-1)(8,-1)
\put(4,-3){$w_{1}$}
\psline[linewidth= 0.75pt]{|*-|*}(0,7)(4,7)
\put(2,6){$w_{2}$}
\psline[linewidth= 0.75pt]{|*-|*}(54,-1)(56,-1)
\put(53,-4){$\mu w_{1}$}
\psline[linewidth= 0.75pt]{|*-|*}(40,-1)(46,-1)
\put(40,-4){$(1-\mu) w_{1}$}
\psset{unit=0.5}
\psset{yunit=1.5}
  \multirput(0,16)(8,0){14}{\pspolygon[fillstyle=solid,
    fillcolor=lightgray]
  (0,0)(6,0)(3,8)(0,8)\pspolygon(6,0)(7,0)(4,8)(3,8)\pspolygon[fillstyle=solid,
    fillcolor=lightgray](7,0)(7,8)(4,8)\pspolygon(7,0)(8,0)(8,8)(7,8)}
 \multirput(0,-16)(8,0){14}{\pspolygon[fillstyle=solid,
    fillcolor=lightgray]
  (0,0)(6,0)(3,-8)(0,-8)\pspolygon(6,0)(7,0)(4,-8)(3,-8)\pspolygon[fillstyle=solid,
    fillcolor=lightgray](7,0)(7,-8)(4,-8)\pspolygon(7,0)(8,0)(8,-8)(7,-8)}
\psset{unit=0.5, linewidth = 0.1pt}
\psset{yunit=1.5}
  \multirput(0,48)(8,0){28}{\pspolygon[fillstyle=solid,
    fillcolor=lightgray]
  (0,0)(6,0)(3,8)(0,8)\pspolygon(6,0)(7,0)(4,8)(3,8)\pspolygon[fillstyle=solid,
    fillcolor=lightgray](7,0)(7,8)(4,8)\pspolygon(7,0)(8,0)(8,8)(7,8)}
 \multirput(0,-48)(8,0){28}{\pspolygon[fillstyle=solid,
    fillcolor=lightgray]
  (0,0)(6,0)(3,-8)(0,-8)\pspolygon(6,0)(7,0)(4,-8)(3,-8)\pspolygon[fillstyle=solid,
    fillcolor=lightgray](7,0)(7,-8)(4,-8)\pspolygon(7,0)(8,0)(8,-8)(7,-8)}
 \end{pspicture}
\caption{Construction for $\sigma^{\eta}_{(\lambda,1)}$}
\label{fig:branch}
\end{figure}
 
With this we can define the modified phase functions:
\begin{align*}
& \tilde{\chi}_{1}^{\eta}:=   2\chi_{(\lambda,1)}(y_{2}) -1,\\
& \tilde{\chi}_{2}^{\eta}:= (1-2\mu) -\sigma^{\eta}_{(0,\lambda)}(y_{1},y_{2}) - \sigma^{\eta}_{(\lambda,1)}(y_{1},y_{2}), \\
& \tilde{\chi}_{3}^{\eta}:=  \sigma^{\eta}_{(0,\lambda)}(y_{1},y_{2}) - (1-2\mu)\chi_{(\lambda,1)}(y_{2})
 - \sigma^{\eta}_{(\lambda,1)}(y_{1},y_{2}) + (1- 2\mu)\chi_{(0,\lambda)}(y_{2}),
\end{align*}
with $\chi_{(0,\lambda)}(y_{2}):= \left\{ \begin{array}{ll} 1, & y_{2}\in (0,\lambda)\\
0, & else. \end{array} \right.
$\\
The definition of $\chi_{(\lambda,1)}$ is to be understood analogously, just as 
$\sigma^{\eta}_{(0, \lambda)}(y_{1}, y_{2})$ is defined in analogy to $\sigma^{\eta}_{(\lambda,1)}(y_{1},y_{2})$. 
We have (c.f. Figure \ref{Abb:12} (a))
\begin{align*}
 (\tilde{\chi}_{1}^{\eta}, \tilde{\chi}_{2}^{\eta}, \tilde{\chi}_{3}^{\eta}) \in \{ (1,1,1), (1,-1,-1), (-1,1,-1), (-1,-1,1) \}.
\end{align*}
This shows that $\tilde{\chi}_{1}^{\eta}, \tilde{\chi}_{2}^{\eta}, \tilde{\chi}_{3}^{\eta}$ originate from $\chi_{1}^{\eta},  \chi_{2}^{\eta},\chi_{3}^{\eta}, \chi_{4}^{\eta} \in \{0,1\}$, 
i.e. these functions are indeed modified characteristic functions.\\

\textbf{Step 4:} \itshape Energy estimates.\\
\upshape In oder to estimate the elastic energy, we first remark that via the triangle inequality $\tilde{\chi}_{1}^{\eta}, \tilde{\chi}_{2}^{\eta}, \tilde{\chi}_{3}^{\eta}$ can be decomposed into $\sigma^{\eta}_{(0,\lambda)}, \sigma^{\eta}_{(\lambda,1)}, \chi_{(0,\lambda)}, \chi_{(\lambda,1)}$.
Since $\chi_{(0,\lambda)}, \chi_{(\lambda,1)}$ only depend on $y_{2}$, the terms involving $\mathcal{F}\chi_{(0,\lambda)}, \mathcal{F}\chi_{(\lambda,1)}$ do not contribute for $k_{1}\neq 0$. 
As these expressions only occur in the definition of $\tilde{\chi}_{3}, \tilde{\chi}_{1}$ and are therefore multiplied by $k_{1}$, these functions do not play a role
for the estimate of the elastic energy.\\ 
Consequently, we can conclude:
\begin{align*}
 E_{elast}(\chi^{\eta}) 
& \lesssim \sum\limits_{\substack{ k\in \mathbb{Z}^{2}\\ k\neq 0} }|k|^{-2}k_{2}^{2}\left(|\mathcal{F}{\sigma^{\eta}}_{(\lambda,1)}(k_{1},k_{2})|^{2} + |\mathcal{F}{\sigma^{\eta}}_{(0,\lambda)}(k_{1},k_{2})|^{2}\right).
\end{align*}

\medskip

With the previous steps we can now reason as in \cite{CO08}:\\
\textbf{Step 5:} \itshape Estimate of the energy contributions originating from $\sigma^{\eta}_{(\lambda,1)}$.\\
We define
\begin{align*}
&E(\sigma^{\eta}_{(\lambda,1)}) := \eta^{-\frac{2}{3}}\sum\limits_{\substack{k\in \mathbb{Z}^{2} \\ k\neq 0}}|k|^{-2}k_{2}^{2}|\mathcal{F}{\sigma^{\eta}}_{(\lambda,1)}|^{2} + \eta^{\frac{1}{3}}\int\limits_{(0,1)^{2}}{|\nabla \sigma_{(\lambda,1)}^{\eta}|dx}\\
& =: \eta^{-\frac{2}{3}}E_{elast}(\sigma^{\eta}_{(\lambda,1)}) + \eta^{\frac{1}{3}}E_{surf}(\sigma^{\eta}_{(\lambda,1)}).
\end{align*}
\upshape \textbf{Step 5a:} \itshape Estimate of the energy within the branching region.\\
\upshape
We consider the following equivalent formulations of the elastic energy:
\begin{align*}
\MoveEqLeft{ E_{elast}(\sigma^{\eta}_{(\lambda,1)}) = \sum\limits_{\substack{k\in \mathbb{Z}^{2} \\ k\neq 0}}|k|^{-2}k_{2}^{2}|\mathcal{F}{\sigma^{\eta}}_{(\lambda,1)}|^{2}
= \int\limits_{(0,1)^{2}}{||\nabla|^{-1}\partial_{2}\sigma^{\eta}_{(\lambda,1)}|^{2}dy}}\\
& = \inf\{\int\limits_{(0,1)^{2}}|h|^{2}dy ; \; h \; (0,1)^{2}\mbox{-periodic, } \int\limits_{(0,1)^{2}}h\cdot \nabla \varphi dy = \int\limits_{(0,1)^{2}}\sigma^{\eta}_{(\lambda,1)}\partial_{2}\varphi dy \\
& \;\;\,\, \;\;  \forall \varphi: \mathbb{R}^{2} \rightarrow \mathbb{R}^{2}, \; (0,1)^{2}\mbox{-periodic}  \}.
\end{align*}
Thus, choosing $h = \begin{pmatrix} h_{1} \\ 0  \end{pmatrix}$ as indicated in the sketch (c.f. Figure \ref{fig:sigma}), we obtain an upper bound for the energy.

\begin{figure}[hbtp]
\begin{pspicture}(0,-1)(9,5)
\pspolygon(0,0)(4,0)(4,4)(0,4)
\pspolygon(0,0)(2,0)(1,4)(0,4)
\pspolygon(3,0)(3,4)(2,4) 
\put(.25,3){+1}
\put(1.5,3){-1}
\put(2.4,3){+1}
\put(3.5,3){-1}
\psline{|*-|*}(0,-0.4)(4,-.4)
\put(2,-.7){$w_{n}$}
\psline{|*-|*}(0,4.4)(2,4.4)
\put(0.7,4.6){$w_{n+1}$}
\psline{|*-|*}(4.4,0)(4.4,4)
\put(4.6,2){$l_{n}$}

\pspolygon(6,0)(10,0)(10,4)(6,4)
\pspolygon(8,0)(9,0)(8,4)(7,4)
\psline{->}(7.6,2)(8.4,2)
\psline{->}(7.85,1)(8.65,1)
\psline{->}(7.35,3)(8.15,3)
\put(6.5,1){$h=0$}
\put(8.5,3){$h=0$}

\end{pspicture}
\caption{$\sigma^{\eta}_{(\lambda,1)}$ and the corresponding ``field'' $h$}
\label{fig:sigma}
\end{figure}
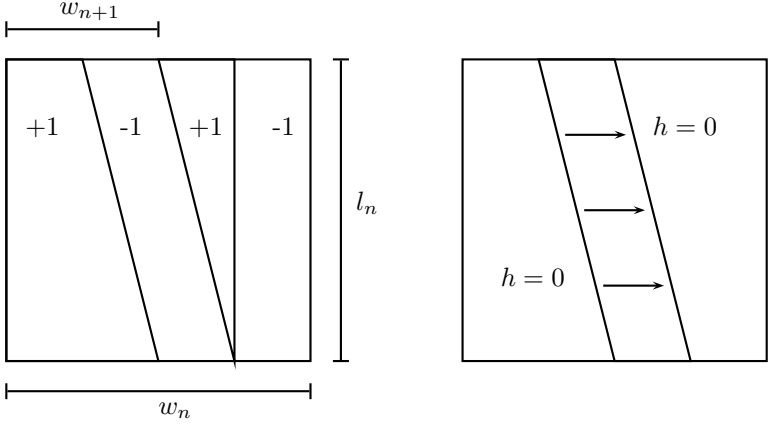

Since
$
 |h| = |h_{1}| \lesssim w_{n}l_{n}^{-1}
$
and choosing $w_{n} \leq l_{n}$, we have:
\begin{align*}
 \eta^{\frac{1}{3}}\int|\nabla \sigma^{\eta}_{(\lambda,1)}|dy + \eta^{-\frac{2}{3}}\int|h|^{2}dy
& \lesssim \eta^{\frac{1}{3}}\max\{w_{n}, l_{n}\} + \eta^{-\frac{2}{3}}(w_{n}l_{n}^{-1})^{2}(w_{n}l_{n})\\
& \lesssim \eta^{\frac{1}{3}}l_{n} + \eta^{-\frac{2}{3}}w_{n}^{3}l_{n}^{-1}.
\end{align*}
Summing over the $w_{n}^{-1}$-cells per row, the total energy is bounded by:
\begin{align*}
\sum\limits_{n=1}^{N}w_{n}^{-1}(\eta^{\frac{1}{3}}l_{n} + \eta^{-\frac{2}{3}}w_{n}^{3}l_{n}^{-1}) =  \sum\limits_{n=1}^{N}(w_{n}^{-1}\eta^{\frac{1}{3}}l_{n} + \eta^{-\frac{2}{3}}w_{n}^{2}l_{n}^{-1}).
\end{align*}

\textbf{Step 5b:} \itshape Estimate of the energy within the transition layers.\\
\upshape On $(0,1)\times\{y_{2}= \lambda\}$, $\sigma^{\eta}_{(\lambda,1)}$ jumps from $1-2\mu$ to $1$ and $-1$ respectively; on $(0,1)\times\{y_{2}= 1\}$ the situation is analogous. 
Thus, $\partial_{2}\sigma^{\eta}_{(\lambda,1)}$ displays a singular contribution on these lines. In order to compensate these ``charges'' we construct a ``field'' $h$.
For this purpose we introduce the transition layers
\begin{align*}
 (0,1)\times\left(\lambda - w_{N+1}, \lambda \right) \cup (0,1)\times(1,1+w_{N+1}),
\end{align*}
where $w_{N+1}\leq \frac{\lambda}{2}$. In oder to secure the permissibility of the ``field'' $h$ belonging to the $H^{-1}$-norm we have to account for the jumps in direction of the normal.
The ``field'' $h$ is constructed in cells of width $w_{N+1}$ in the first transition layer via a potential $u$ with $h=-\nabla u$. In other words, $u$ is prescribed to satisfy\footnote{ At
this point the correct volume fractions for the phases play an essential role as this guarantees that the Neumann problem for the Laplacian can be solved.} 
\begin{align*}
& - \Delta u = 0 \;\; \mbox{ in } (0, w_{N+1})\times\left(\lambda -w_{N+1},\lambda \right)\\
& \frac{\partial u}{\partial \nu} = \left\{ \begin{array}{ll}
                                             2\mu & y_{1}\in (0, (1-\mu) w_{N+1}), \; y_{2} = \lambda\\
					     -2(1-\mu) & y_{1}\in ((1-\mu) w_{N+1}, w_{N+1}), \, y_{2} = \lambda\\
					     0 & \mbox{else}.
                                            \end{array}\right.
\end{align*}
For the second transition layer a similar construction is employed.
Thus, we can estimate the elastic energy in the transition layer:
\begin{align*}
 \int\limits_{(0,w_{N+1})\times (\lambda - w_{N+1}, \lambda)}|h|^{2}dy
 = \int|\nabla u|^{2}dy \lesssim w_{N+1}^{2} 
\end{align*}
for each of the $w_{N+1}^{-1}$ cells.

\textbf{Step 5c:} \itshape Combination of the energy estimates.\\
\upshape Combining the observations of step 5a and step 5b, we obtain:
\begin{align*}
E_{\eta}(\sigma_{(\lambda,1)}^{\eta}) & = \eta^{\frac{1}{3}}\int|\nabla \sigma^{\eta}_{(\lambda,1)}|dy + \eta^{-\frac{2}{3}}\int|h|^{2}dy \\
& \lesssim \sum\limits_{n=1}^{N}(\eta^{\frac{1}{3}}w_{n}^{-1}l_{n} + \eta^{-\frac{2}{3}}w_{n}^{2}l_{n}^{-1}) + \eta^{\frac{1}{3}} + \eta^{-\frac{2}{3}}w_{N+1}\\
& = \sum\limits_{n=1}^{N}(\eta^{\frac{1}{3}}2^{(n-1)(1-\beta)}\frac{l_{1}}{w_{1}} + \eta^{-\frac{2}{3}}2^{(n-1)(\beta -2)}\frac{w_{1}^{2}}{l_{1}})\\
& \;\;\;\;\;\; + \eta^{\frac{1}{3}} + 2^{-(N+1)}\eta^{-\frac{2}{3}} w_{1}\\
& \lesssim \eta^{\frac{1}{3}}\frac{l_{1}}{w_{1}} + \eta^{-\frac{2}{3}}\frac{w_{1}^{2}}{l_{1}} + \eta^{\frac{1}{3}} + 2^{-(N+1)}\eta^{-\frac{2}{3}} w_{1},
\end{align*}
where we used that, for instance for $\beta= \frac{3}{2}$, we have
\begin{align*}
& \sum\limits_{n=1}^{N}2^{(n-1)(1-\beta)} \leq C, \;\; \sum\limits_{n=1}^{N}2^{(n-1)(\beta-2)} \leq C.
\end{align*}
Recalling $\frac{1-\lambda}{2} \sim l_{1}$ and minimizing $\eta^{\frac{1}{3}}\frac{1-\lambda}{w_{1}} + \eta^{-\frac{2}{3}}\frac{w_{1}^{2}}{1-\lambda}$ with respect to $w_{1}$, 
we obtain 
\begin{align*}
w_{1}= \eta^{\frac{1}{3}}(1-\lambda) \mbox{ and } E_{\eta}(\chi^{\eta}) \lesssim 1 +\eta^{\frac{1}{3}} + 2^{-(N+1)}\eta^{-\frac{1}{3}} \lesssim 1,
\end{align*}
if we choose $N$ so large that $2^{-(N+1)}\lesssim \eta^{\frac{1}{3}}$. 
Additionally, we have to choose $N$ as to satisfy 
\begin{align*}
& w_{n}\leq l_{n},\\
& w_{N+1} \leq \frac{\lambda}{2}.
\end{align*}
For the first condition we define $N$ such that $\eta^{\frac{2}{3}}\leq 2^{-N}$ which can be achieved as $\eta \ll 1$ (which due to $\eta \leq 1$ does not contradict $2^{-(N+1)} \lesssim \eta^{\frac{1}{3}}$). In particular, we can require $\eta^{\frac{2}{3}} \sim 2^{-N} $. 
Thus, the second condition reduces to $(1-\lambda) \eta \leq \lambda$, which can also be satisfied due to $\eta \ll \lambda$.

\textbf{Step 6:} \itshape Conclusion.\\
\upshape Since $\sigma^{\eta}_{(0,\lambda)}$ and $\sigma^{\eta}_{(\lambda,1)}$ display the same scaling behavior the previous computations lead to the overall energy estimate:
\begin{align*}
\MoveEqLeft{ E_{\eta}(\chi^{\eta})  = \eta^{\frac{1}{3}}\int|\nabla \chi_{1}^{\eta}| +|\nabla \chi_{2}^{\eta}|+|\nabla \chi_{3}^{\eta}|+|\nabla \chi_{4}^{\eta}| dx + \eta^{-\frac{2}{3}}E_{elast}} \\
& \lesssim E(\sigma^{\eta}_{(0,\lambda)}) +  E(\sigma^{\eta}_{(\lambda,1)}) + \eta^{\frac{1}{3}}\int\limits_{(0,1)^{2}}{|\nabla \chi_{\{y_{2}\in(\lambda,1)\}}|dx} + \eta^{\frac{1}{3}}\int\limits_{(0,1)^{2}}{|\nabla \chi_{\{y_{2}\in(0,\lambda)\}}|dx}\\
& \lesssim 1 + \eta^{\frac{1}{3}}\lesssim 1.
\end{align*}
This proves the upper bound.\\
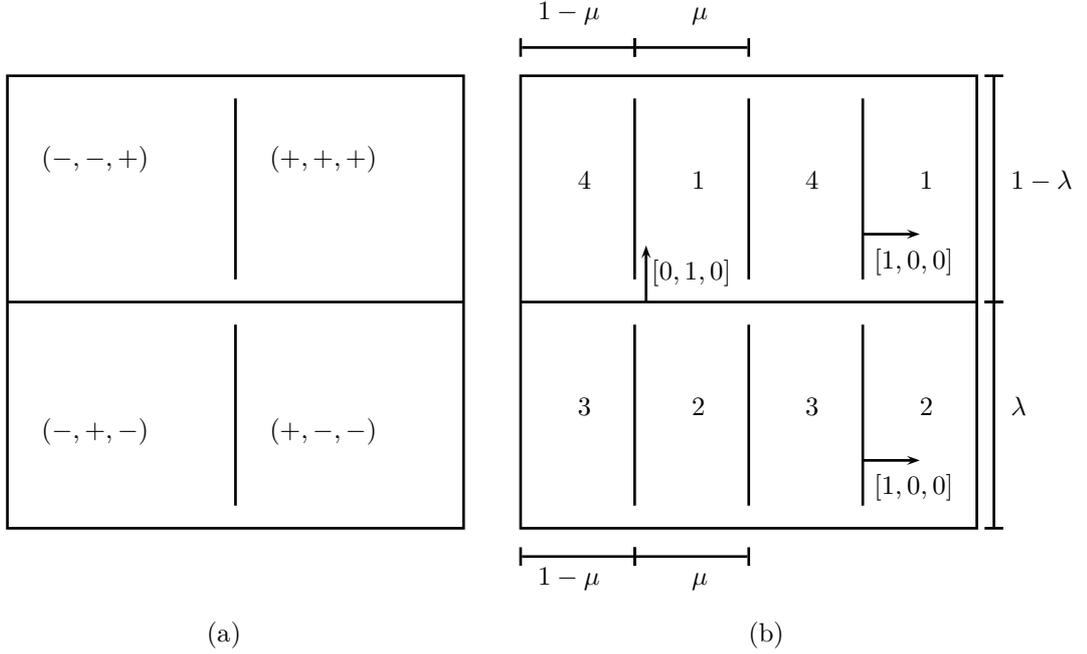
\begin{figure}[hbt]

\begin{pspicture}(0.5,-1.5)(10,7)
\psset{unit=1.5cm}
\psset{linewidth=1pt}

\pspolygon(0.5,0)(4.5,0)(4.5,4)(0.5,4)
\psline(0.5,2)(4.5,2)
\psline(2.5,0.2)(2.5,1.8)
\psline(2.5,2.2)(2.5,3.8)
\put(0.8,0.8){$(-,+,-)$}
\put(2.8,0.8){$(+,-,-)$}
\put(0.8,3.2){$(-,-,+)$}
\put(2.8,3.2){$(+,+,+)$}

\put(2.25,-1){(a)}

\pspolygon(5,0)(9,0)(9,4)(5,4)
\psline(5,2)(9,2)
\psline(6,0.2)(6,1.8)
\psline(7,0.2)(7,1.8)
\psline(8,0.2)(8,1.8)
\psline(6,2.2)(6,3.8)
\psline(7,2.2)(7,3.8)
\psline(8,2.2)(8,3.8)
\put(5.5,1){3}
\put(6.5,1){2}
\put(7.5,1){3}
\put(8.5,1){2}
\put(5.5,3){4}
\put(6.5,3){1}
\put(7.5,3){4}
\put(8.5,3){1}
\psline{|*-|*}(5,-0.25)(6,-0.25)
\put(5.15,-0.5){$1-\mu$}
\psline{|*-|*}(6,-.25)(7,-.25)
\put(6.5,-.5){$\mu$}
\psline{|*-|*}(9.15,0)(9.15,2)
\put(9.3,1){$\lambda$}
\psline{|*-|*}(9.15,2)(9.15,4)
\put(9.3,3){$1-\lambda$}
\psline{->}(6.1,2)(6.1,2.5)
\put(6.15,2.2){$[0,1,0]$}

\psline{->}(8,2.6)(8.5,2.6)
\put(8.1,2.3){$[1,0,0]$}

\psline{->}(8,.6)(8.5,.6)
\put(8.1,.3){$[1,0,0]$}

\psline{|*-|*}(5,4.25)(6,4.25)
\put(5.15,4.5){$1-\mu$}
\psline{|*-|*}(6,4.25)(7,4.25)
\put(6.5,4.5){$\mu$}

\put(7,-1){(b)}

\end{pspicture}
\caption{Schematic arrangement of (a) the modified phase functions $(\tilde{\chi}_{2}, \tilde{\chi}_{3}, \tilde{\chi}_{1})$, (b) the actual phase functions $(\chi_{1},\chi_{2},\chi_{3},\chi_{4})$}
\label{Abb:12}
\end{figure}

In order to verify (\ref{eq:Mikr1}), (\ref{eq:Mikr2}), we calculate $\theta_{i}^{\eta}$ as functions of $\lambda$ and of $\mu$.
Due to the choice of the phases $\chi_{1},...,\chi_{4}$ as functions of $\sigma^{\eta}, \chi$ we can easily determine the volume fractions (c.f. Figure \ref{Abb:12}):  
\begin{align*}
&\chi_{1}^{\eta} = \frac{1}{2}(\chi_{(\lambda,1)}(y_{2}) - \sigma^{\eta}_{(\lambda,1)}(y_{1},y_{2}) + (1-2\mu)\chi_{(0,\lambda)}(y_{2})),\\
&\chi_{2}^{\eta} = \frac{1}{2}(1 - \chi_{(\lambda,1)}(y_{2}) - \sigma^{\eta}_{(0,\lambda)}(y_{1},y_{2}) + (1-2\mu)\chi_{(\lambda,1)}(y_{2})),\\
&\chi_{3}^{\eta} = \frac{1}{2}(1 + \sigma^{\eta}_{(0,\lambda)}(y_{1},y_{2}) - (1-2\mu)\chi_{(\lambda,1)}(y_{1},y_{2}) - \chi_{(\lambda,1)}(y_{2})),\\
&\chi_{4}^{\eta} = \frac{1}{2}(\sigma^{\eta}_{(\lambda,1)}(y_{1},y_{2}) - (1-2\mu)\chi_{(0,\lambda)}(y_{2}) + \chi_{(\lambda,1)}(y_{2})).
\end{align*}
Thus, we find
\begin{align*}
& \theta^{\eta}_{1}=\mu(1 - \lambda), \; \theta^{\eta}_{2} = \mu\lambda, \; \theta^{\eta}_{3} = (1-\mu)\lambda, \; \theta^{\eta}_{4} = (1-\mu)(1-\lambda).
\end{align*}
Finally, this yields
\begin{align*}
& \theta^{\eta}_{1}(\theta^{\eta}_{2} + \theta^{\eta}_{4}) - \theta^{\eta}_{2}(\theta^{\eta}_{1} + \theta^{\eta}_{3}) = (1-\mu)\mu(1-2\lambda),\\
& \theta^{\eta}_{4}(\theta^{\eta}_{1} + \theta^{\eta}_{3}) - \theta^{\eta}_{1}(\theta^{\eta}_{2} + \theta^{\eta}_{4}) = \lambda(1-\lambda)(1-2\mu). 
\end{align*}
Choosing $\lambda = \mu$, the right hand expression takes values in $(0, \frac{3}{16}]$ for $\mu\in(0,1)$. As the energy estimate was independent of 
$\mu, \; \lambda$, we can choose $\mu,\; \lambda$ arbitrarily. This implies the claim.
\end{proof}

\bigskip

\end{document}